\newcommand{\M}{\mathcal M}
\newcommand{\G}{\mathcal G}
\newcommand{\w}{\omega}
\newcommand{\IR}{\mathbb R}
\newcommand{\I}{\mathcal I}
\newcommand{\J}{\mathcal J}
\newcommand{\A}{\mathcal A}
\newcommand{\F}{\mathcal F}
\newcommand{\K}{\mathcal K}
\newcommand{\C}{\mathcal C}
\newcommand{\U}{\mathcal U}
\newcommand{\V}{\mathcal V}
\newcommand{\W}{\mathcal W}
\newcommand{\HH}{\mathcal H}
\newcommand{\II}{\mathbb I}
\newcommand{\IZ}{\mathbb Z}
\newcommand{\IN}{\mathbb N}
\newcommand{\IQ}{\mathbb Q}
\newcommand{\add}{\mathrm{add}}
\newcommand{\cov}{\mathrm{cov}}
\newcommand{\non}{\mathrm{non}}
\newcommand{\cof}{\mathrm{cof}}
\newcommand{\id}{\mathrm{id}}
\newcommand{\e}{\varepsilon}
\newcommand{\Ra}{\Rightarrow}
\newcommand{\GG}{\mathcal G}
\newcommand{\Z}{\mathcal Z}
\newcommand{\Tau}{\mathcal T}
\newcommand{\diam}{\mathrm{diam}}
\newcommand{\NWD}{\mathrm{NWD}}
\newtheorem{theorem}{Theorem}[section]
\newtheorem{lemma}[theorem]{Lemma}
\newtheorem{corollary}[theorem]{Corollary}
\newtheorem{problem}[theorem]{Problem}
\newtheorem{example}[theorem]{Example}
\newtheorem{proposition}[theorem]{Proposition}
\newtheorem{claim}[theorem]{Claim}
\theoremstyle{definition}
\newtheorem{definition}[theorem]{Definition}
\title{Topologically invariant $\sigma$-ideals on the Hilbert cube}
\author{Taras Banakh, Micha\l\ Morayne, Robert Ra\l owski, Szymon \.Zeberski}
\address[Taras Banakh]{Department of Mathematics, Ivan Franko National University of Lviv, Ukraine, and\newline
Institute of Mathematics, Jan Kochanowski University, Kielce, Poland}
\email{t.o.banakh@gmail.com}
\address[Micha\l\ Morayne, Robert Ra\l owski, Szymon \. Zeberski]{Institute of Mathematics and Computer Science, Wroc\l aw University of Technology, Wroc\l aw, Poland}
\email{michal.morayne@pwr.wroc.pl, robert.ralowski@pwr.wroc.pl, szymon.zeberski@pwr.wroc.pl}
\subjclass{03E15; 03E17; 54H05; 55M10; 57N20}
\keywords{Topologically invariant ideal, cardinal characteristics, meager set, Cantor set, Hilbert cube}
\thanks{The work has been partially financed by NCN means granted by decision DEC-2011/01/B/ST1/01439. }
\begin{document}
\begin{abstract} We study and classify topologically invariant $\sigma$-ideals with Borel base on  the Hilbert cube and evaluate their cardinal characteristics. One of the results of this paper solves (positively) a known problem whether the minimal cardinalities of the families of Cantor sets
covering the unit interval and the Hilbert cube are the same.
\end{abstract}

\maketitle

\section{Introduction and survey of principal results}

In this paper we study properties of topologically invariant $\sigma$-ideals with Borel base on the Hilbert cube $\II^\w=[0,1]^\w$. In particular, we evaluate the cardinal characteristics of such $\sigma$-ideals. One of the results of this paper solves (positively) a known problem whether the minimal cardinalities of the families of Cantor sets
covering the unit interval and the Hilbert cube are the same.


Topologically invariant ideals in the finite dimensional case were considered in paper \cite{BMRZ} devoted to studying topologically invariant $\sigma$-ideals with Borel base on Euclidean spaces $\IR^n$. To present the principal results, we need to recall some definitions.

A non-empty family $\I$ of subsets of a set $X$ is called an {\em ideal on $X$} if $\I$ is hereditary with respect to taking subsets and $\I$ is additive (in the sense that $A\cup B\in\I$ for any subsets $A,B\in\I$).
An ideal $\I$ on $X$ is called a {\em $\sigma$-ideal} if for each countable subfamily $\A\subseteq\I$ the union $\bigcup\A$ belongs to $\I$. An ideal $\I$ on $X$ will be called {\em non-trivial} if $\I$ contains some uncountable subset of $X$ and $\I$ does not coincide with the ideal $\mathcal P(X)$ of all subsets of $X$. Each family $\F$ of subsets of a set $X$ generates the $\sigma$-ideal $\sigma\F$ consisting of subsets of countable unions of sets from the family $\F$.

A subset $A$ of a topological space $X$ has {\em the Baire property} (briefly, is a {\em BP-set}) if there is an open set $U\subseteq X$ such that the symmetric difference $A\triangle U=(A\setminus U)\cup(U\setminus A)$ is meager in $X$ (i.e., is a countable union of nowhere dense subsets of $X$).
A subfamily $\mathcal B\subseteq\I$ is called a {\em base} for $\I$ if each set $A\in\I$ is contained in some set $B\in\mathcal B$. We shall say that an ideal $\I$ on a Polish space $X$ has {\em $\sigma$-compact base} (resp. {\em Borel base}, {\em analytic base}, {\em BP-base}) if $\I$ has a base consisting of $\sigma$-compact (resp. Borel, analytic, BP-) subsets of $X$. Let us recall that a subset $A$ of a Polish space $X$ is {\em analytic} if $A$ is the image of a Polish space under a continuous map.  It is well-known that each Borel subset of a Polish space $X$ is analytic and each analytic subset of $X$ has the Baire property. Thus, for an ideal $\I$ on a Polish space $X$ we have the following implications:
$$\mbox{$\I$ has  $\sigma$-compact base $\Ra$ $\I$ has  Borel base $\Ra$ $\I$ has  analytic base $\Ra$ $\I$ has BP-base}.$$
Classical examples of $\sigma$-ideals with Borel base on the real line $\IR$ are the ideal $\M$ of meager subsets and the ideal $\mathcal N$ of Lebesgue null subsets of $\IR$. One of the differences between these ideals is that the ideal $\M$ is topologically invariant while $\mathcal N$ is not.

We shall say that an ideal $\I$ on a topological space $X$ is {\em topologically invariant} if $\I$ is preserved by homeomorphisms of $X$ in the sense that $\I=\{h(A):A\in\I\}$ for each homeomorphism $h:X\to X$ of $X$.

In \cite{BMRZ} we proved that the ideal $\M$ of meager subsets of a Euclidean space $\IR^n$ is the largest topologically invariant $\sigma$-ideal with BP-base on $\IR^n$. This is not true anymore for the Hilbert cube $\II^\w$ as shown by the $\sigma$-ideal $\sigma\mathcal D_{0}$ of countable-dimensional subsets of $\II^\w$. The $\sigma$-ideal $\sigma\mathcal D_0$ is generated by all zero-dimensional subspaces of $\II^\w$ and has a base consisting of countable-dimensional $G_{\delta\sigma}$-sets. It is clear that $\sigma\mathcal D_0\not\subseteq\M$. So, $\M$ is not the largest non-trivial $\sigma$-ideal with Borel base on $\II^\w$. Nonetheless, the ideal $\M$ has the following maximality property.

\begin{theorem}\label{t1.1} The ideal $\M$ of meager subsets of the Hilbert cube $\II^\w$ is:
\begin{enumerate}
\item a maximal non-trivial topologically invariant ideal with BP-base on $\II^\w$, and
\item the largest non-trivial topologically invariant ideal with $\sigma$-compact base on $\II^\w$.
\end{enumerate}
\end{theorem}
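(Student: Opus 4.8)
The plan is to derive both statements from a single fact: every non-trivial topologically invariant ideal $\I$ on $\II^\w$ that either has a BP-base and contains $\M$ (for (1)), or merely has a $\sigma$-compact base (for (2)), must satisfy $\I\subseteq\M$. First one checks that $\M$ itself qualifies. Since $\II^\w$ is compact metrizable, closed sets are compact, so $F_\sigma$-sets are $\sigma$-compact; as every meager set is contained in a meager $F_\sigma$-set, the meager $F_\sigma$-sets form a $\sigma$-compact (hence Borel, hence BP) base for $\M$. Moreover $\M$ is topologically invariant (homeomorphisms preserve meagerness) and non-trivial (the Baire Category Theorem gives $\II^\w\notin\M$, while $\M$ plainly contains uncountable sets, e.g. a Cantor set).

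Next, a reduction: if $\I$ is a non-trivial topologically invariant ideal and $A\in\I\setminus\M$, then $\I$ contains a nonempty open set. In case (1) pick a BP-set $B\in\I$ with $A\subseteq B$; then $B$ is non-meager, so $B\triangle V$ is meager for some nonempty open $V$, whence $V\setminus B$ is meager and lies in $\M\subseteq\I$, so $V=(V\cap B)\cup(V\setminus B)\in\I$. In case (2) pick a $\sigma$-compact $B\in\I$ with $A\subseteq B$ and write $B=\bigcup_n K_n$ with each $K_n$ compact; since $B$ is non-meager some $K_{n_0}$ is not nowhere dense, hence has nonempty interior $V$, and $V\subseteq K_{n_0}\in\I$ forces $V\in\I$.

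The heart of the matter — and the step I expect to be the main obstacle — is the covering fact: for every nonempty open $U\subseteq\II^\w$ there are finitely many self-homeomorphisms $h_1,\dots,h_k$ of $\II^\w$ with $\bigcup_{i\le k}h_i(U)=\II^\w$. This is exactly where $\II^\w$ departs from $\IR^n$ and even from $\II=[0,1]$, where the analogous statement fails because homeomorphic images of a proper subinterval never reach the endpoints; on $\II^\w$ it is the superabundance of self-homeomorphisms that rescues us. It suffices to treat $U$ a basic closed box $\prod_{i<m}[a_i,b_i]\times\II^{\w\setminus m}$, since every nonempty open set contains such a box and a covering by copies of the box gives one by copies of $U$. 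For $m=1$, say $U=[1/3,2/3]\times\II^{\w\setminus1}$, one adds a single extra copy $h(U)$ of the form $R\times\II^{\w\setminus2}$, where $R\subseteq\II^2$ is an H-shaped $2$-cell (two vertical side-slabs joined by a thin horizontal bridge) containing $\big([0,1/3)\cup(2/3,1]\big)\times\II$: both $[1/3,2/3]\times\II$ and $R$ are disks meeting $\partial\II^2$ in two arcs, with complements in $\II^2$ consisting of two half-plane-like components, so a Schoenflies-type argument yields a self-homeomorphism of $\II^2$ (hence of $\II^\w$) carrying $[1/3,2/3]\times\II$ onto $R$, and then $U\cup h(U)=\II^\w$. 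The general case should follow the same idea — wrapping homeomorphic copies of the box around $\II^\w$ through the many free coordinates — but to carry it out cleanly I expect to invoke standard infinite-dimensional topology (homogeneity of $\II^\w$ and $Z$-set unknotting) or covering results developed elsewhere in the paper.

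Granting the covering fact, the conclusion is immediate. In the reduction step $\I$ contains a nonempty open $U$, hence by topological invariance and finite additivity $\I\ni\bigcup_{i\le k}h_i(U)=\II^\w$, contradicting non-triviality; so no such $A$ exists, i.e. $\I\subseteq\M$. This gives (2): every non-trivial topologically invariant ideal with $\sigma$-compact base is contained in $\M$, so $\M$ is the largest one. For (1): any non-trivial topologically invariant ideal with BP-base that contains $\M$ must, by the above, also be contained in $\M$, hence equal $\M$; thus $\M$ is not properly contained in any such ideal, i.e. it is maximal. ($\M$ is not the largest: the $\sigma$-ideal $\sigma\mathcal D_0$ of countable-dimensional subsets is non-trivial, topologically invariant, has a Borel base, and is incomparable with $\M$ — it escapes the argument precisely because $\M\not\subseteq\sigma\mathcal D_0$, so the step ``$V\setminus B\in\I$'' is unavailable there.)
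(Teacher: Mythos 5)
Your argument is correct and, in its overall structure, matches the paper's: both reduce Theorem~\ref{t1.1} to the fact that finitely many homeomorphic copies of any nonempty open $U\subseteq\II^\w$ cover $\II^\w$, and both invoke $\M\subseteq\I$ in case (1) to absorb a meager correction term (you to conclude $V\in\I$ from $V\setminus B\in\M$; the paper to conclude $\II^\w\in\I$ from $\II^\w\setminus G\in\M$). Your reformulation that produces a nonempty open set in $\I$ in both cases, so that a single application of the covering fact finishes everything, is a clean unification.

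Where you overcomplicate is the covering fact itself. You flag it as the main obstacle and sketch an explicit Schoenflies-style construction for basic boxes, expecting to fall back on $Z$-set unknotting; but the argument the paper cites is elementary and much shorter, and it is worth internalizing. By a theorem of Keller the Hilbert cube is topologically homogeneous: for any $x,y\in\II^\w$ there is a self-homeomorphism sending $x$ to $y$. Fixing $x_0\in U$, for each $y\in\II^\w$ choose $h_y\in\HH(\II^\w)$ with $h_y(x_0)=y$, so $y\in h_y(U)$; the family $\{h_y(U)\}_{y\in\II^\w}$ is an open cover of the compact space $\II^\w$ and thus admits a finite subcover $\II^\w=\bigcup_{i=1}^k h_{y_i}(U)$. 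No box reduction, no $\II^2$ picture, no Schoenflies or unknotting theorem is required. Your diagnosis of why the analogous statement fails on $[0,1]$ (no homogeneity: endpoints are distinguished) and on $\IR^n$ (no compactness) is exactly right, and those two properties are precisely what the simple argument uses.
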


\begin{proof} (1) Given a non-trivial topologically invariant ideal $\I\supseteq \M$ with BP-base on $\II^\w$ we should prove that $\I=\M$. Assume that $\I$ contains some subset $A\in\I\setminus\M$. Since $\I$ has  BP-base, we can additionally assume that the set $A$ has the Baire property in $\II^\w$.
Being non-meager, the BP-set $A$ contains a $G_\delta$-subset $G_U\subseteq A$, dense in some non-empty open set $U\subseteq Q$. The compactness and the topological homogeneity of the Hilbert cube (see e.g. \cite[6.1.6]{Mill}) allow us to find a finite sequence of homeomorphisms $h_1,\dots,h_n:\II^\w\to \II^\w$ such that $\II^\w=\bigcup_{i=1}^nh_i(U)$. The topological invariance of the ideal $\I$ guarantees that the dense $G_\delta$-set $G=\bigcup_{i=1}^nh_i(G_U)$ belongs to the ideal $\I$. Since $\II^\w\setminus G\in\M\subseteq\I$ is meager, we conclude that $\II^\w=G\cup(\II^\w\setminus G)\in\I$, which means that the ideal $\I$ is trivial.
\smallskip

(2) Next, assume that $\I$ is a non-trivial topologically invariant ideal on $\II^\w$ with $\sigma$-compact base. To show that $\I\subseteq\M$, it suffices to check that each $\sigma$-compact set $K\in\I$ is meager in $\II^\w$. Assuming that $K$ is not meager and applying Baire Theorem, we conclude that $K$ contains a non-empty open subset $U\subseteq\II^\w$. By the compactness and the topological homogeneity of $\II^\w$ there are homeomorphisms $h_1,\dots,h_n$ of $\II^\w$ such that $\II^\w=\bigcup_{i=1}^n h_i(U)$. Now the topological invariance and the additivity of $\I$ imply that $\II^\w\in\I$, which means that the ideal $\I$ is trivial.
\end{proof}

In \cite{BMRZ} we proved that the family of all non-trivial topologically invariant $\sigma$-ideals with analytic base on a Euclidean space $\IR^n$ contains the smallest element, namely the $\sigma$-ideal $\sigma\C_0$ generated by so called tame Cantor sets in $\IR^n$. A similar fact holds also for topologically invariant ideals with an analytic base on the Hilbert cube $\II^\w$.

By a {\em Cantor set} in $\II^\w$ we understand any subset $C\subseteq \II^\w$ homeomorphic to the Cantor cube $\{0,1\}^\w$. By Brouwer's characterization \cite{Ke} of the Cantor cube, a closed subset $C\subseteq\II^\w$ is a Cantor set if and only if $C$ is zero-dimensional and has no isolated points.
A Cantor set $A\subseteq \II^\w$ is called {\em minimal} if for each Cantor set $B\subseteq\II^\w$ there is a homeomorphism $h:\II^\w\to\II^\w$ such that $h(A)\subseteq B$.

Minimal Cantor sets in the Hilbert cube $\II^\w$ can be characterized as Cantor $Z_\w$-sets. Let us recall that a closed subset $A$ of a topological space $X$ is called a {\em $Z_n$-set} in $X$ for $n\le\w$ if the set $\{f\in C(\II^n,X):f(\II^n)\cap A=\emptyset\}$ is dense in the space $C(\II^n,X)$ of all continuous functions from $\II^n$ to $X$, endowed with the compact-open topology. A closed subset $A$ of a topological space $X$ is called a {\em $Z$-set} in $X$ if for any open cover $\U$ of $X$ there is a continuous map $f:X\to X\setminus A$, which is {\em $\U$-near} to the identity map in the sense that for each $x\in X$ the set $\{f(x),x\}$ is contained in some set $U\in\U$. It is clear that a subset of the Hilbert cube is a $Z$-set in $\II^\w$ if and only if it is a $Z_\w$-set in $\II^\w$.
By the $Z$-Set Unknotting Theorem 11.1 in \cite{Chap}, any two Cantor $Z$-sets $A,B\subseteq\II^\w$ are {\em abmiently homeomorphic}, which means that there is a homeomorphism $h:\II^\w\to\II^\w$ such that $h(A)=B$. This implies that a Cantor set $C\subseteq\II^\w$ is minimal if and only if it is a $Z$-set in $\II^\w$, see Proposition~\ref{Zmin}.

By $\C_0$ we denote the family of all minimal Cantor sets in $\II^\w$ and by
$\sigma\C_0$ the $\sigma$-ideal generated by the family $\C_0$. Observe that $\sigma\C_0$ coincides with the $\sigma$-ideal generated by zero-dimensional $Z$-sets in $\II^\w$. The following theorem shows that $\sigma\C_0$ is the smallest non-trivial $\sigma$-ideal with analytic base on $\II^\w$.

\begin{theorem}\label{t1.2} The family $\C_0$ (the $\sigma$-ideal $\sigma\C_0$) is contained in each topologically invariant non-trivial ($\sigma$-)ideal $\I$ with analytic base on $\II^\w$.
\end{theorem}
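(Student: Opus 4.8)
The plan is to reduce the statement to finding a single Cantor $Z$-set inside an arbitrary Cantor set. First I record the elementary reductions. Since $\sigma\C_0$ is, by definition, the $\sigma$-ideal generated by $\C_0$, it suffices to prove the (non‑parenthetical) inclusion $\C_0\subseteq\I$, for then $\sigma\C_0\subseteq\I$ whenever $\I$ is a $\sigma$-ideal. By the $Z$-Set Unknotting Theorem recalled above, the minimal Cantor sets in $\II^\w$ are precisely the Cantor $Z$-sets, and any two of them are ambiently homeomorphic; hence, by the topological invariance of $\I$, it is enough to produce \emph{one} Cantor $Z$-set lying in $\I$. As $\I$ is non-trivial it contains an uncountable set, which --- $\I$ having an analytic base --- is contained in an uncountable analytic member $B\in\I$; by the classical perfect set property of analytic sets, $B$ contains a Cantor set $C$, and $C\in\I$ since $\I$ is hereditary. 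Thus the theorem reduces to the purely topological assertion that \emph{every Cantor set $C\subseteq\II^\w$ contains a Cantor subset $C'$ which is a $Z$-set in $\II^\w$}; granting this, $C'\in\I$ and we are done.

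To prove this assertion I would first isolate a convenient sufficient condition for being a $Z$-set. Let $\pi_n\colon\II^\w\to\II$ denote the $n$-th coordinate projection. \emph{Claim}: if a closed set $F\subseteq\II^\w$ satisfies $\pi_n(F)\ne\II$ for infinitely many $n$, then $F$ is a $Z$-set. Indeed, given an open cover $\U$ of the compact space $\II^\w$, choose such an $n$ large enough that $2^{-n}$ is smaller than a Lebesgue number of $\U$ (for the standard metric on $\II^\w$); pick $t\in\II\setminus\pi_n(F)$ and let $f\colon\II^\w\to\II^\w$ send the $n$-th coordinate to the constant $t$, keeping the other coordinates fixed. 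Then $f(\II^\w)\cap F=\varnothing$ because $F\cap\pi_n^{-1}(t)=\varnothing$, and for each $x$ the points $x$ and $f(x)$ differ only in the $n$-th coordinate, so $\{x,f(x)\}$ lies in a single member of $\U$; thus $f$ is $\U$-near to the identity, and as $\U$ was arbitrary, $F$ is a $Z$-set.

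It remains to find $C'\subseteq C$ omitting a value in infinitely many coordinates, which I would do by a fusion (Cantor-scheme) construction. Recursively I build nonempty clopen subsets $C_s\subseteq C$ for $s\in 2^{<\w}$, with $C_\varnothing=C$, $C_{s0},C_{s1}\subseteq C_s$ disjoint, $\diam(C_s)\le 2^{-|s|}$, and --- the crucial feature --- a single value $t_m\in\II$ for each $m\ge1$ such that $\pi_m(C_s)\subseteq\II\setminus\{t_m\}$ whenever $|s|=m$. When passing from level $m-1$ to level $m$ this is arranged as follows: there are only finitely many sets $C_s$ of length $m-1$, hence only finitely many values $v$ for which $\pi_m$ is identically $v$ on some such $C_s$; choose $t_m\in\II$ avoiding all of them (possible since $\II$ is infinite), and for each $C_s$ of length $m-1$ observe that $\{x\in C_s:x_m\ne t_m\}$ is then nonempty and open in $C$, hence infinite since $C$ is perfect, so it contains two disjoint clopen subsets of $C$ of diameter $\le 2^{-m}$, which we take as $C_{s0}$ and $C_{s1}$. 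The scheme produces a Cantor set $C'=\{\,\bigcap_m C_{\alpha\restriction m}:\alpha\in 2^\w\,\}\subseteq C$, and since every point of $C'$ lies in some $C_s$ of length $m$, we get $\pi_m(C')\subseteq\II\setminus\{t_m\}\ne\II$ for all $m\ge1$. By the Claim, $C'$ is a $Z$-set, completing the argument.

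The main obstacle is this topological lemma, and within it the bookkeeping of the fusion. A Cantor set in $\II^\w$ may project onto all of $\II$ in every coordinate, and so may each of its clopen pieces of small diameter (small diameter constrains only finitely many coordinates), so one cannot simply take $C'=C$ or a clopen ball of small diameter. The device that makes the construction go through is to ``kill'' one fresh coordinate-value uniformly over the finitely many current pieces \emph{before} branching; once that is arranged, verifying that the killed values survive in all later (nested) pieces, and that the scheme defines a genuine Cantor set, is routine.
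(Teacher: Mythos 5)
Your proposal is correct, and it takes a genuinely different route from the paper's. Both proofs make the same preliminary reduction (it suffices to exhibit a single Cantor $Z$-set in $\I$; by the $Z$-Set Unknotting Theorem and topological invariance, the rest follows), and both extract a Cantor set $C$ from an uncountable analytic member of $\I$ via the perfect set property. The divergence is in how one then obtains a Cantor $Z$-set. The paper splits the uncountable analytic set $A$ by the pseudo-interior $s=(0,1)^\w$: one of $A\cap s$, $A\setminus s$ is uncountable analytic, hence contains a Cantor set, and any compact subset of $s$ or of $\II^\w\setminus s$ is automatically a $Z$-set. You instead take an arbitrary Cantor set $C\subseteq A$ and run a fusion inside it, killing one fresh coordinate value per level, to produce a Cantor subset $C'$ with $\pi_m(C')\ne\II$ for all $m\ge 1$; your elementary criterion (a closed set missing a value in infinitely many coordinates is a $Z$-set) then applies. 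Your fusion and its bookkeeping are sound: the crucial observation that only finitely many $v$ can have $\pi_m$ constantly equal to $v$ on one of the finitely many level-$(m-1)$ pieces is exactly what lets you choose $t_m$, and the omitted value persists downward by nesting. What the two approaches buy: the paper's splitting is shorter and leans on a standard infinite-dimensional-topology fact about the pseudo-interior/pseudo-boundary (which, unpacked, rests on the same projection criterion you prove); your argument is more self-contained, proves the criterion from scratch, and establishes the slightly stronger topological lemma that \emph{every} Cantor set in $\II^\w$ already contains a Cantor $Z$-set, with no appeal to $s$.
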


\begin{proof}

Let $C\in\C_0$. By the hypothesis there exists an uncountable analytic set $A\in\I$. It is known that $A$ contains a copy $C_1$
of a Cantor set. By the minimality of $C$ there exists a homeomorphism $h$ of the Hilbert cube such that $h(C)\subseteq C_1\subseteq A$.
Hence $h(C)\in\I$. By the topological invariance of $\I$ also $C\in\I$.
\end{proof}

As we know, on the Hilbert cube there are non-trivial topologically invariant $\sigma$-ideals with Borel base, which are not contained in the ideal $\M$ of meager sets. It turns out that among such $\sigma$-ideals there is the smallest one. It is denoted by $\sigma\GG_0$ and is generated by minimal dense $G_\delta$-subsets of $\II^\w$.

A dense $G_\delta$-subset $A$ of a Polish space $X$ will be called {\em minimal} if for each dense $G_\delta$-set $B\subseteq X$ there is a homeomorphism $h:X\to X$ such that $h(A)\subseteq B$.
By \cite{BR2}, any two minimal dense $G_\delta$-subsets of $\II^\w$ are ambiently homeomorphic. Minimal dense $G_\delta$-sets in $\II^\w$ were characterized in \cite{BR2} as dense tame $G_\delta$-set.
To introduce tame $G_\delta$-sets in the Hilbert cube we need some additional notions.

A family $\V$ of subsets of a topological space $X$ is called {\em vanishing} if for any open cover $\U$ of $X$ the subfamily $\{V\in\V:\forall U\in\U\;\;V\not\subseteq U\}$ is locally finite in $X$.

An open subset $U$ of $\II^\w$ is called a {\em tame open ball} if
\begin{itemize}
\item its closure $\bar U$ in $\II^\w$ is homeomorphic to the Hilbert cube;
 \item its boundary $\partial U$ in $\II^\w$ is homeomorphic to the Hilbert cube;
\item $\partial U$ is a $Z$-set in $\bar U$ and in $\II^\w\setminus U$.
\end{itemize}
 By \cite[12.2]{Chap}, tame open balls form a base of the topology of the Hilbert cube.

A subset $U$ of $\II^\w$ is called a {\em tame open set} in $\II^\w$ if $U=\bigcup\U$ for some  vanishing family $\U$ of tame open balls with pairwise disjoint closures in $\II^\w$. The family $\U$ is unique and coincides with the family $\C(U)$ of all connected components of $U$. By $\bar\C(U)=\{\bar C:C\in\C(U)\}$ we shall denote the (disjoint) family of closures of the connected components of the set $U$.

A subset $G$ of $\II^\w$ is called a {\em tame $G_\delta$-set} in $\II^\w$ if $G=\bigcap_{n\in\w}U_n$ for some sequence $(U_n)_{n\in\w}$ of tame open sets in $\II^\w$ such that $\bigcup\bar\C(U_{n+1})\subseteq U_n$ for every $n\in\w$ and the family $\bigcup_{n\in\w}\bar \C(U_n)$ is vanishing in $\II^\w$.

By Theorem 4 of \cite{BR2}, a dense $G_\delta$-set in $\II^\w$ is minimal if and only if it is dense tame $G_\delta$ in $\II^\w$.

Denote by $\GG_0$ the family of all minimal dense $G_\delta$-sets in $\II^\w$ and by $\sigma\GG_0$ the $\sigma$-ideal generated by the family $\GG_0$. It is clear that $\GG_0\not\subseteq\M$. It turns out that the $\sigma$-ideal $\sigma\GG_0$ is the smallest topologically invariant $\sigma$-ideal with BP-base, which is not contained in the ideal $\M$ of meager subsets in $\II^\w$.

\begin{theorem}\label{t1.3} The family $\GG_0$ (the $\sigma$-ideal $\sigma\GG_0$) is contained in each topologically invariant ($\sigma$-)ideal $\I\not\subseteq\M$ with BP-base on $\II^\w$.
\end{theorem}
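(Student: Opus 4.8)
The plan is to combine the ``homogenization'' trick used in the proof of Theorem~\ref{t1.1}(1) with the argument of Theorem~\ref{t1.2}: from a witness for the failure $\I\not\subseteq\M$ we first produce a \emph{dense} $G_\delta$-set belonging to $\I$, and then we push an arbitrary minimal dense $G_\delta$-set inside it using the defining property of minimality together with the topological invariance of $\I$.

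Concretely, let $\I\not\subseteq\M$ be a topologically invariant ideal with BP-base on $\II^\w$ and fix a set $A\in\I\setminus\M$. Since $\I$ has BP-base, $A$ is contained in some BP-set $\tilde A\in\I$, which is non-meager (it contains the non-meager set $A$); so, replacing $A$ by $\tilde A$, we may assume that $A$ has the Baire property and is non-meager. Then, exactly as in the proof of Theorem~\ref{t1.1}, the set $A$ contains a $G_\delta$-subset $G_U\subseteq A$ that is dense in some non-empty open set $U\subseteq\II^\w$. Using the compactness and the topological homogeneity of the Hilbert cube, choose homeomorphisms $h_1,\dots,h_n$ of $\II^\w$ with $\II^\w=\bigcup_{i=1}^n h_i(U)$, and put $G:=\bigcup_{i=1}^n h_i(G_U)$. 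Being a finite union of $G_\delta$-sets, $G$ is a $G_\delta$-set; it is dense in $\II^\w$ because each $h_i(G_U)$ is dense in $h_i(U)$ and $\bigcup_{i=1}^n h_i(U)=\II^\w$; and $G\in\I$ by the topological invariance and the (finite) additivity of $\I$, since each $h_i(G_U)\subseteq h_i(A)\in\I$.

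It remains to take an arbitrary minimal dense $G_\delta$-set $M\in\GG_0$ (such sets exist by \cite{BR2}) and apply the definition of minimality to the dense $G_\delta$-set $G$: there is a homeomorphism $h$ of $\II^\w$ with $h(M)\subseteq G$. Since $G\in\I$ and the ideal $\I$ is hereditary, $h(M)\in\I$, and hence $M\in\I$ by the topological invariance of $\I$. Thus $\GG_0\subseteq\I$; and if moreover $\I$ is a $\sigma$-ideal, then $\sigma\GG_0\subseteq\I$, because $\sigma\GG_0$ is the $\sigma$-ideal generated by $\GG_0$.

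I expect no serious obstacle here: the argument is essentially a routine splice of the two displayed proofs. The only points needing a little care are the standard descriptive fact that a non-meager BP-set contains a $G_\delta$-set dense in some non-empty open set, and the verification that the ``homogenized'' finite union $G$ is a genuine dense $G_\delta$-set. It is worth noting that the characterization of minimal dense $G_\delta$-sets as dense tame $G_\delta$-sets from \cite{BR2} is not actually needed here — only the defining property of minimality (a smallness property) is used, just as in Theorem~\ref{t1.2} only the analogous smallness property of minimal Cantor sets was used.
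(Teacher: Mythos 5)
Your proof is correct and is essentially identical to the paper's argument: the paper likewise extracts a dense $G_\delta$-set $G\in\I$ from a non-meager BP-set in $\I$ by the homogenization trick of Theorem~\ref{t1.1} and then pushes a minimal dense $G_\delta$-set into $G$ via a homeomorphism. You have merely filled in the details that the paper compresses into ``repeating the argument from the proof of Theorem~\ref{t1.1}'' (and, incidentally, you correctly write $G=\bigcup_{i=1}^n h_i(G_U)$ where the paper's proof of Theorem~\ref{t1.1} has the evident typo $G=\bigcup_{i=1}^n G_U$).
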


\begin{proof} If $\I\not\subseteq\M$, then we can find a non-meager set $A\in\I$. Repeating the argument from the proof of Theorem~\ref{t1.1}, we can show that the ideal $\I$ contains a dense $G_\delta$-subset $G$ of $\II^\w$. To check that $\GG_0\subseteq \I$, fix any minimal dense $G_\delta$-set $M\subseteq \II^\w$ and find a homeomorphism $h$ of $\II^\w$ such that $h(M)\subseteq G\in\I$. Then $h(M)\in\I$ and $M\in\I$ by the topological invariance of $\I$.
\end{proof}

In light of Theorem~\ref{t1.3}, it is important to study the properties of the $\sigma$-ideal $\sigma\GG_0$ and how it is related to other $\sigma$-ideals. Since each minimal dense $G_\delta$-set in $\II^\w$ is zero-dimensional, the ideal $\sigma\GG_0$ is contained in the $\sigma$-ideal $\sigma\mathcal D_0$ generated by the family $\mathcal D_0$ of all zero-dimensional subspaces of $\II^\w$. By \cite[1.5.8]{En2}, the $\sigma$-ideal $\sigma\mathcal D_0$ contains the $\sigma$-ideal $\sigma\overline{\mathcal D}_{<\w}$ generated by the family $\overline{\mathcal D}_{<\w}$ of all closed finite-dimensional subsets of $\II^\w$.

\begin{theorem}\label{t1.4} The family $\overline{\mathcal D}_{<\w}$ is contained in each topologically invariant ideal $\I\not\subseteq\M$ with BP-base on $\II^\w$. Consequently, $\sigma\overline{\mathcal D}_{<\w}\subsetneq\sigma\GG_0\subseteq \sigma\mathcal D_{0}$.
\end{theorem}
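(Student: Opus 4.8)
The plan is to reduce the statement to Theorem~\ref{t1.3} by showing that every closed finite-dimensional subset $F\subseteq\II^\w$ is contained in a countable union of minimal dense $G_\delta$-sets, i.e. $\oD_\w\subseteq\sigma\GG_0$. Once this is established, Theorem~\ref{t1.3} immediately gives $\oD_\w\subseteq\sigma\GG_0\subseteq\I$ for every topologically invariant ideal $\I\not\subseteq\M$ with BP-base, and the displayed chain $\sigma\oD_\w\subseteq\sigma\GG_0\subseteq\sigma\DD_0$ follows (the second inclusion because each minimal dense $G_\delta$-set is zero-dimensional, the strictness of the first inclusion because $\sigma\GG_0\not\subseteq\M$ while $\sigma\oD_\w\subseteq\M$, all closed finite-dimensional subsets of $\II^\w$ being nowhere dense, hence meager).

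The heart of the matter is therefore: \emph{given a closed finite-dimensional $F\subseteq\II^\w$, write $F$ as a countable union of sets each contained in a minimal (= dense tame) $G_\delta$-subset of $\II^\w$.} First I would recall that $\II^\w\setminus F$ is a dense open subset of $\II^\w$, and since $F$ is a finite-dimensional (in particular $Z$-set, being closed and nowhere dense... one must be a bit careful: closed finite-dimensional subsets of $\II^\w$ are indeed $Z$-sets) the complement $\II^\w\setminus F$ is homeomorphic to a dense subset with nice properties. The strategy is to build a single minimal dense $G_\delta$-set $M_0\subseteq\II^\w$ with $M_0\supseteq F$: concretely, choose a vanishing family of tame open balls whose closures are pairwise disjoint, miss $F$... no — we need the $G_\delta$-set to \emph{contain} $F$, so instead we build tame open sets $U_n\supseteq F$ with the nesting and vanishing properties from the definition of tame $G_\delta$-set, with $\bigcap_n U_n$ dense. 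Since $F$ is closed and finite-dimensional, it has arbitrarily fine finite open covers by tame balls, and by a standard swelling/shrinking argument using \cite[12.2]{Chap} one can arrange a vanishing family of tame open balls with disjoint closures whose union $U_n$ contains $F$ and has mesh tending to $0$; the intersection $\bigcap_n U_n$ is then a tame $G_\delta$-set containing $F$, and it is dense because each $U_n$ is dense (being a neighborhood of the nowhere dense... here density of $\bigcap U_n$ needs that we also throw in enough balls away from $F$).

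The main obstacle I anticipate is precisely this simultaneous demand: each $U_n$ must be a genuine \emph{tame open set} (a vanishing disjoint union of tame balls), must contain the fixed closed set $F$, and the intersection must be \emph{dense} in $\II^\w$. Containing $F$ forces the balls near $F$ to cover a neighborhood of $F$, but tameness of the $G_\delta$-set requires the closures $\bar\C(U_n)$ to vanish and to be nested ($\bigcup\bar\C(U_{n+1})\subset U_n$), which conflicts with $F$ being "thick" in the sense of having positive dimension along itself — one cannot shrink a neighborhood of a positive-dimensional continuum down through a vanishing family. The resolution is that we do \emph{not} put all of $F$ into one minimal $G_\delta$-set; instead, decompose $F=\bigcup_{k}F_k$ into countably many pieces each of which \emph{is} contained in a minimal dense $G_\delta$-set. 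The clean way: a closed finite-dimensional subset of $\II^\w$ is a countable union of $Z$-sets that are "tame" enough; better, use that $F$, being finite-dimensional and closed in $\II^\w$, embeds as a $Z$-set and hence (by $Z$-set absorption / general position in the Hilbert cube, \cite{Chap}) for any $\e>0$ there is a homeomorphism pushing $F$ into an arbitrarily thin tame neighborhood — iterate to get $F\subseteq h_n(W_n)$ with $\bigcap h_n(W_n)$ manageable. I would spend the bulk of the proof making this decomposition precise, citing the characterization from \cite{BR2} of minimal dense $G_\delta$-sets as dense tame $G_\delta$-sets, and verifying the vanishing and nesting conditions for each piece; the final assembly and the invocation of Theorem~\ref{t1.3} are then routine.
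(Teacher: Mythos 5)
Your proposal does not prove the theorem as stated, and the construction it sketches has an unfilled gap at its heart.

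The first problem is a logical gap in the reduction. Theorem~\ref{t1.3}, applied to a topologically invariant \emph{ideal} $\I\not\subseteq\M$ with BP-base, gives only $\GG_0\subseteq\I$, not $\sigma\GG_0\subseteq\I$; the latter requires $\I$ to be a $\sigma$-ideal. So even if you succeeded in showing that every closed finite-dimensional $F$ lies in a \emph{countable} union of minimal dense $G_\delta$-sets, you would obtain $F\in\I$ only for $\sigma$-ideals. To conclude $\overline{\mathcal D}_\w\subseteq\I$ for a mere ideal you must cover $F$ by \emph{finitely} many sets of $\I$, and nothing in your sketch produces a finite bound.

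The second problem is the construction itself. You correctly identify that you cannot put all of $F$ inside one minimal dense $G_\delta$-set when $\dim F>0$, and you propose to decompose $F$ into zero-dimensional pieces and enlarge each to a minimal (i.e.\ dense tame) $G_\delta$-set; but that enlargement step is exactly where the difficulty lies and it is left entirely unaddressed. The paper poses as an open problem whether $\sigma\GG_0=\sigma\mathcal D_0$, i.e.\ whether every zero-dimensional subset of $\II^\w$ can be caught in countably many minimal dense $G_\delta$-sets, so one should not expect a routine ``swelling/shrinking'' argument to produce the needed dense tame $G_\delta$-enlargements. The paper's actual proof takes a completely different route: it first shows (repeating the argument of Theorem~\ref{t1.1}) that $\I$ contains \emph{some} dense $G_\delta$-set $G$, and then proves via a Baire category argument in the group $\HH(\II^\w)$ (Lemma~\ref{l3.5}, built on the inductive Lemmas~\ref{l8.4}--\ref{l8.6}) that for any $F_\sigma$-set $F$ with $\dim F=k$ the set of $(k{+}4)$-tuples $(h_i)$ with $F\subseteq\bigcup_{i=1}^{k+4}h_i(G)$ is a dense $G_\delta$ in $\HH(\II^\w)^{k+4}$, hence non-empty. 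That finite cover by homeomorphic copies of $G\in\I$, together with topological invariance and finite additivity of the ideal, is what gives $F\in\I$. Your sketch contains none of this machinery and, as written, could not reach the conclusion for non-$\sigma$ ideals.
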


Theorem~\ref{t1.4} will be proved in Section~\ref{s:t1.4}.
Theorems~\ref{t1.2} and \ref{t1.3} will help us to evaluate the cardinal characteristics of an arbitrary topologically invariant $\sigma$-ideal with analytic base on the Hilbert cube.

Given an ideal $\I$ on a set $X=\bigcup\I\notin \I$, we shall consider the following four cardinal characteristics of $\I$:
$$\begin{aligned}
&\add(\I)=\min\{|\A|:\A\subseteq\I,\;\;\textstyle{\bigcup}\A\notin\I\},\\
&\non(\I)=\min\{|A|:A\subseteq X,\;\;A\notin\I\},\\
&\cov(\I)=\min\{|\A|:\A\subseteq\I,\;\textstyle{\bigcup}\A=X\},\\
&\cof(\I)=\min\{|\A|:\A\subseteq\I,\;\;\forall B\in\I\;\;\exists A\in\A\;\;(B\subseteq A)\}.
\end{aligned}
$$

In fact, these four cardinal characteristics can be expressed using the following two cardinal characteristics defined for any pair $\I\subseteq\J$ of ideals:
$$\begin{aligned}
&\add(\I,\J)=\min\{|\A|:\A\subseteq\I,\;\;\textstyle{\bigcup}\A\notin\J\} \mbox{ \ and}\\
&\cof(\I,\J)=\min\{|\A|:\A\subseteq\J,\;\;\forall B\in\I\;\exists A\in\A\;\;(B\subseteq A)\}.
\end{aligned}
$$
Namely,
$$\add(\I)=\add(\I,\I),\;\;\non(\I)=\add(\F,\I),\;\;
\cov(\I)=\cof(\F,\I),\;\;\cof(\I)=\cof(\I,\I)$$where $\F$ stands for the ideal of finite subsets of $X$.

The cardinal characteristics of the $\sigma$-ideal $\M$ have been thoroughly studied in Set Theory, see \cite{BJ} or \cite{Blass}.
They fit into the following (piece of Cicho\'n's) diagram in which an arrow $a\to b$ indicates that $a\le b$ in ZFC:

$$\xymatrix{
&&\non(\M)\ar[r]&\cof(\M)\ar@{=}[r]&\max\{\non(\M),\mathfrak d\}\ar[r]&\mathfrak c\\
&&\mathfrak b\ar[u]\ar[r]&\mathfrak d\ar[u]\\
\w_1\ar[r]&\min\{\mathfrak b,\cov(\M)\}\ar@{=}[r]&\add(\M)\ar[r]\ar[u]&\cov(\M)\ar[u]
}
$$

Here $$
\begin{aligned}
\mathfrak b&=\min\{|B|:B\subseteq\w^\w,\;\forall x\in\w^\w\;\exists y\in B\;\;(y\not\le^*x)\}\mbox{ and}\\
\mathfrak d&=\min\{|D|:D\subseteq\w^\w,\;\forall x\in\w^\w\;\exists y\in D\;\;(x\le^*y)\}
\end{aligned}
$$are the {\em bounding} and {\em dominating} numbers \cite{vD}, \cite{Vau}, \cite{Blass} (the notation $x\le^* y$ means $x(n)\le y(n)$ for all but finitely many numbers $n$). The precise position of the small uncountable cardinals $\mathfrak b$ and $\mathfrak d$ in the interval $[\w_1,\mathfrak c]$ depends on additional axioms of ZFC, see \cite{BJ}. The same concerns the cardinal characteristics of the ideal $\M$ on the real line: their values depend on axioms, too, see  \cite{BartCich}.

The cardinal characteristics of the ideal $\M$ will be used to evaluate the cardinal characteristics of the $\sigma$-ideals $\sigma\C_0$ and $\sigma\GG_0$ in the following theorems which are principal results of this paper. These two theorems will be proved in Sections~\ref{s:t1.5} and \ref{s:t1.6}, respectively.

\begin{theorem}\label{t1.5} The $\sigma$-ideal $\sigma\C_0$ on the Hilbert cube has cardinal characteristics:
\begin{enumerate}
\item $\cov(\sigma\C_0)=\cov(\M)$,
\item $\non(\sigma\C_0)=\non(\M)$,
\item $\add(\sigma\C_0,\M)=\add(\M)$,
\item $\cof(\sigma\C_0,\M)=\cof(\M)$.
\end{enumerate}
\end{theorem}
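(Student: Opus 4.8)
The plan is to establish Theorem~\ref{t1.5} by proving two chains of inequalities for each cardinal characteristic: a lower bound comparing $\sigma\C_0$ with $\M$ (which is forced by the fact that $\sigma\C_0$ is, in a suitable sense, ``at least as complex'' as $\M$) and an upper bound (which comes from a canonical way of covering the Hilbert cube by minimal Cantor sets built out of meager data). The key technical device will be a transfer map. First I would fix a tame Cantor $Z$-set $C\subseteq\II^\w$ together with a suitable neighborhood in which $C$ sits ``like'' the classical ternary Cantor set sits in $[0,1]$, and then construct, using the $Z$-set Unknotting Theorem (11.1 in \cite{Chap}), a family of homeomorphisms of $\II^\w$ moving $C$ around. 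The heart of the matter is to produce a Borel (in fact, reasonably explicit) map $\varphi:\II^\w\to\II^\w$ (or a relation) such that preimages of meager sets under $\varphi$ are in $\sigma\C_0$ and, conversely, images of members of $\sigma\C_0$ land inside meager sets — i.e., $\varphi$ witnesses a Tukey-type morphism between the relational systems defining $\M$ and $\sigma\C_0$.

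For the concrete steps: to prove (1), $\cov(\sigma\C_0)=\cov(\M)$, I would argue $\cov(\sigma\C_0)\le\cov(\M)$ by taking a family of meager $F_\sigma$-sets covering $\II^\w$ and pushing it forward (or pulling it back) along $\varphi$ to a family of members of $\sigma\C_0$ of the same cardinality still covering $\II^\w$; the reverse inequality $\cov(\M)\le\cov(\sigma\C_0)$ uses that every minimal Cantor set, being a $Z$-set, is nowhere dense, hence a family of minimal Cantor sets covering $\II^\w$ is in particular a family of meager sets covering $\II^\w$, so $\sigma\C_0\subseteq\M$ gives $\cov(\M)\le\cov(\sigma\C_0)$ directly. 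The same observation $\sigma\C_0\subseteq\M$ immediately yields $\non(\M)\le\non(\sigma\C_0)$ for half of (2) and one inequality in (3) and (4); the nontrivial directions $\non(\sigma\C_0)\le\non(\M)$, $\add(\M)\le\add(\sigma\C_0,\M)$, $\cof(\sigma\C_0,\M)\le\cof(\M)$ are where the transfer map does the work, combined with the standard Bartoszy\'nski-style combinatorial characterizations of $\add(\M)$, $\cof(\M)$, $\non(\M)$, $\cov(\M)$ in terms of chopped sequences / slaloms (as in \cite{BJ}), which reduce everything to producing, for each $x\in\w^\w$, a single minimal Cantor set $C_x$ depending ``continuously and monotonically'' on $x$, so that domination of the parameters translates into inclusion of the Cantor sets up to a meager error.

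I expect the main obstacle to be the construction of the transfer map together with the verification that it interacts correctly with both ideals simultaneously: one needs $\varphi$ Borel, with $\varphi^{-1}(\text{nowhere dense})\in\sigma\C_0$ and with enough minimal Cantor sets arising as such preimages to recover $\M$ from $\sigma\C_0$. The delicate point is dimension-theoretic: a minimal Cantor set must be a zero-dimensional $Z$-set, so the preimages we use have to be kept zero-dimensional, which forces $\varphi$ to be, roughly, a retraction onto a zero-dimensional ``comb'' inside $\II^\w$ (something like $\II^\w\to C\times\{0\}^\w$ followed by a tame embedding), and one must check that this comb can be chosen so that its intersection with a prescribed meager $F_\sigma$-set is again an honest Cantor $Z$-set rather than a finite or countable set. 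Once the appropriate comb and its self-homeomorphisms are in place, the four equalities follow by routine application of the known ZFC relations in Cicho\'n's diagram to the pulled-back families; I would organize the write-up as a single lemma producing the morphism, followed by four short paragraphs deducing (1)--(4).
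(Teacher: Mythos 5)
There is a systematic error in your bookkeeping of which direction is trivial. You get part (1) right: since $\sigma\C_0\subseteq\M$, the cheap direction is $\cov(\M)\le\cov(\sigma\C_0)$, and $\cov(\sigma\C_0)\le\cov(\M)$ is where the work lies. But for (2)--(4) you invert this. Since $\sigma\C_0\subseteq\M$, the \emph{trivial} inequalities are $\non(\sigma\C_0)\le\non(\M)$ (a witness for $A\notin\M$ already witnesses $A\notin\sigma\C_0$), $\add(\sigma\C_0,\M)\ge\add(\M)$ (fewer admissible families $\A\subseteq\sigma\C_0$), and $\cof(\sigma\C_0,\M)\le\cof(\M)$ (any $\M$-cofinal family is automatically $\sigma\C_0$-cofinal). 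You list exactly these as ``the nontrivial directions'' and their reverses as the ones given for free; so the plan, as stated, does not identify where the real work is. In particular, for (3) and (4) you would have to prove $\add(\sigma\C_0,\M)\le\add(\M)$ and $\cof(\sigma\C_0,\M)\ge\cof(\M)$, and neither is touched by the transfer map you sketch; the paper handles these through the decompositions $\add(\M)=\min\{\cov(\M),\mathfrak b\}$ and $\cof(\M)=\max\{\non(\M),\mathfrak d\}$, using a copy of the Baire space $(\II\setminus\IQ)^\w$ inside $\II^\w$ and the fact (Lemma~\ref{product-Z-set}) that compacta there are zero-dimensional $Z$-sets to import a family of $\mathfrak b$ compacta with non-meager union (for $\add$) and a $\mathfrak d$-sized cofinal family of compacta (for $\cof$).

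On the transfer map itself: the spirit is right, but the concrete demand you place on it --- a Borel $\varphi:\II^\w\to\II^\w$ with $\varphi^{-1}(\mathrm{nowhere\ dense})\in\sigma\C_0$, together with enough preimages to recover $\M$ --- is much stronger than needed and would be hard to realize, since preimages of nowhere dense sets under any reasonable self-map of $\II^\w$ have no reason to be zero-dimensional $Z$-sets. What the paper actually builds (Lemmas~\ref{l4.2} and~\ref{l4.3}) is a bijective \emph{semi-open} map $\Phi:\IZ^\w\to\II^\w$, assembled from the Cantor ladder map, with the single targeted property that $\Phi(f+\IZ_0^\w)\in\sigma\C_0$ for every $f\in\IZ^\w$. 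That is precisely calibrated to the Bartoszy\'nski characterizations $\cov(\M)=\min\{|\F|:\F+\IZ_0^\w=\IZ^\w\}$ and $\non(\M)=$ the least size of a set in $\IZ^\w$ not coverable by countably many translates of $\IZ_0^\w$; one pushes forward a covering family of translates for (1), and pulls back a small set and covers its preimage by countably many translates for (2). So you do not need a full Tukey morphism between the ideals on $\II^\w$ --- you need a bijection from $\IZ^\w$ that sends a specific countable-product-structured nowhere dense set into $\sigma\C_0$, which is exactly what semi-openness plus Lemma~\ref{product-Z-set} deliver. Your instinct to route through the Bartoszy\'nski combinatorics is correct; the execution should go through the Baire space rather than through a self-map of $\II^\w$, and (3), (4) need the separate $\mathfrak b$/$\mathfrak d$ arguments noted above.
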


\begin{theorem}\label{t1.6} The $\sigma$-ideal $\sigma\GG_0$ on the Hilbert cube has cardinal characteristics:
$$\w_1\le\add(\sigma\GG_0)\le \cov(\sigma\GG_0)\le\add(\M)\le\cof(\M)\le\non(\sigma\GG_0)\le\cof(\sigma\GG_0)\le\mathfrak c.$$
\end{theorem}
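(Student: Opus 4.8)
The plan is to establish the chain of inequalities by handling three non-trivial links; the remaining ones are either trivial ($\w_1\le\add(\sigma\GG_0)$ since $\sigma\GG_0$ is a $\sigma$-ideal), automatic consequences of general inequalities valid for any ideal ($\add(\I)\le\cov(\I)$ type facts, applied in the relativized form $\add(\sigma\GG_0)\le\cov(\GG_0)$ — note that $\cov(\GG_0)$ here means the covering number computed with respect to the generating family $\GG_0$, which dominates $\add(\sigma\GG_0)$ because a family witnessing the latter cannot cover $\II^\w$ faster than the best cover by $\GG_0$-sets), or follow from $\add\le\cof$ together with the middle inequalities, or are the trivial upper bound $\cof(\sigma\GG_0)\le\mathfrak c$ (which uses that $\sigma\GG_0$ has a Borel base, and there are only $\mathfrak c$ Borel sets). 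So the heart of the matter is the pair of inequalities $\cov(\GG_0)\le\add(\M)$ and $\cof(\M)\le\non(\sigma\GG_0)$, which are dual to each other.

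First I would prove $\cov(\GG_0)\le\add(\M)$. Fix a family $\{A_\alpha:\alpha<\add(\M)\}$ of meager subsets of some Polish space (say of $\II^\w$ itself, or of $\w^\w$ — it is cleanest to work with $\M(\w^\w)$) whose union is non-meager. The key tool is that a minimal dense $G_\delta$-set $M\subseteq\II^\w$, being homeomorphic to $\w^\w$ (a dense tame $G_\delta$-set in the Hilbert cube is homeomorphic to the irrationals, by its characterization and by classical infinite-dimensional topology), carries its own ideal of meager subsets, and the complement $\II^\w\setminus M$ is meager in $\II^\w$. The strategy is: choose a single minimal dense $G_\delta$-set $M_0$, transport the witnessing family of meager sets into $M_0$ via a homeomorphism $\w^\w\cong M_0$, obtaining meager-in-$M_0$ sets $B_\alpha$ with non-meager union; then each $B_\alpha$, together with the meager set $\II^\w\setminus M_0$, can be covered by (countably many translates, under homeomorphisms of $\II^\w$, of) a minimal dense $G_\delta$-set. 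Indeed a meager subset of $M_0$ is nowhere dense-by-countable, and the point is that the union of a meager-in-$M_0$ set with $\II^\w\setminus M_0$ sits inside a set whose complement is still a dense $G_\delta$; by minimality of dense $G_\delta$-sets and the ambient homogeneity coming from Theorem~\ref{t1.3}'s underlying machinery (\cite{BR2}), such a complement contains a homeomorphic copy of any prescribed $M\in\GG_0$, equivalently $M_0\setminus B_\alpha$ contains a copy of $M$ under a homeomorphism of $\II^\w$. Covering $M_0\setminus B_\alpha$ by $\GG_0$-sets and noting $\bigcup_\alpha B_\alpha$ is non-meager-in-$M_0$ hence $\bigcap_\alpha(M_0\setminus B_\alpha)$ cannot contain a dense $G_\delta$, one extracts a subfamily of $\GG_0$ of size $\add(\M)$ covering $\II^\w$. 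This needs to be organized carefully, but the combinatorial skeleton is exactly the classical proof that $\cov(\M)\le\cdots$; the main obstacle is matching the topology of $M_0\cong\w^\w$ with the internal meager ideal so that ``meager in $M_0$'' translates correctly to ``its complement in $M_0$ still contains a minimal dense $G_\delta$ of $\II^\w$''.

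Dually, for $\cof(\M)\le\non(\sigma\GG_0)$, I would take a set $X\subseteq\II^\w$ with $X\notin\sigma\GG_0$ and $|X|=\non(\sigma\GG_0)$ and produce from it a cofinal family in $\M$ of the same size. The idea: fix a minimal dense $G_\delta$-set $M_0\cong\w^\w$; since $X\notin\sigma\GG_0$, the trace of $X$ on $M_0$ (or on a homeomorphic copy) cannot be covered by countably many copies of minimal dense $G_\delta$-sets, and via the homeomorphism $M_0\cong\w^\w$ this non-covering translates, through the characterization of $\GG_0$ by tameness, into $X\cap M_0$ being a non-meager-in-$M_0$ set; more precisely one shows the family $\{\,\overline{M_0\setminus h(M)}^{\,M_0}: h\text{ a homeomorphism of }\II^\w,\ M\in\GG_0\,\}$ of closed nowhere dense subsets of $M_0$ that meet $X$ is cofinal in the nowhere dense ideal of $M_0$, whence $X\cap M_0$ is non-meager in $M_0\cong\w^\w$ and in fact, by the same tameness dictionary, a \emph{cofinal} family of meager sets of $\w^\w$ can be read off from $X$ of size $\le|X|$. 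Combined with $\non(\M)\le\cof(\M)=\cof(\M(\w^\w))$ (Cichoń), this yields $\cof(\M)\le|X|=\non(\sigma\GG_0)$. The hard part throughout is this dictionary: establishing precisely that a subset of $\II^\w$ lies in $\sigma\GG_0$ if and only if its intersection with a fixed minimal dense $G_\delta$-set $M_0$ is meager \emph{in} $M_0$ — the forward direction is clear since $\II^\w\setminus M_0$ is meager and any $M\in\GG_0$ meets $M_0$ in a set that is at worst meager-in-$M_0$ (by vanishing/tameness), while the converse requires showing that every meager-in-$M_0$ subset, once one adds the meager remainder $\II^\w\setminus M_0$, is still absorbed by countably many ambient copies of minimal dense $G_\delta$-sets; this is where the structural results of \cite{BR2} on tame $G_\delta$-sets and $Z$-set unknotting do the real work. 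Once this equivalence $\sigma\GG_0\restriction$-to-$M_0$ $=\M(M_0)$ is in place, both $\cov(\GG_0)\le\add(\M)$ and $\cof(\M)\le\non(\sigma\GG_0)$ become immediate translations of $\add(\M)=\add(\M(\w^\w))$ and $\cof(\M)=\cof(\M(\w^\w))$, and the full chain follows by assembling the elementary inequalities.
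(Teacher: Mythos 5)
The proposal has a fatal flaw at its core. The entire strategy hinges on the ``dictionary'' you posit: that a set $A\subseteq\II^\w$ lies in $\sigma\GG_0$ if and only if $A\cap M_0$ is meager in $M_0$ for a fixed minimal dense $G_\delta$-set $M_0$. Both directions of this equivalence are false, and what is worse, if it did hold it would yield the \emph{wrong} bound. For the forward direction: if $M\in\GG_0$ then $M$ and $M_0$ are both comeager $G_\delta$-sets in $\II^\w$, so $M\cap M_0$ is a dense $G_\delta$-subset of $M_0$, hence \emph{comeager} in $M_0$ --- not meager. (Taking $M=M_0$ makes this as stark as possible: $M_0\cap M_0=M_0$ is certainly not meager in itself.) So already $\GG_0\subseteq\sigma\GG_0$ violates the claimed forward implication. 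For the converse: the complement $\II^\w\setminus M_0$ contains, for instance, boundaries of tame balls from the defining tame sequence, and these are copies of $\II^\w$; such a compact set has empty intersection with $M_0$ (hence ``meager in $M_0$''), yet it cannot belong to $\sigma\GG_0$, because $\sigma\GG_0\subseteq\sigma\mathcal D_0$ consists only of countable-dimensional sets. Finally, even waving these issues away, the dictionary $\sigma\GG_0\restriction M_0=\M(M_0)$ would force $\cov(\sigma\GG_0)\ge\cov(\M(M_0))=\cov(\M)$, whereas the theorem asserts $\cov(\GG_0)\le\add(\M)$, which is consistently strictly smaller than $\cov(\M)$; so the mechanism you propose cannot produce the inequalities stated.

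The paper's actual proof proceeds along entirely different lines and, crucially, gets at $\mathfrak b$ and $\mathfrak d$ directly rather than factoring through the meager ideal of a fixed copy of $\w^\w$. It works with dense tame $G_\delta$-sets $G$ of the \emph{interval} $\II$, represents $G=\bigcap_n\bigcup_m U_{n,m}$ via a vanishing family of intervals (Lemma~\ref{l5.3}), and builds from each increasing $f\in\w^\w$ a tame family $\Tau^f$ of tame balls in $\II^\w$ (finite products of the $U_{n,m}$'s with $m\le f(n)$), giving a tame $G_\delta$-set $T^f=\bigcup^\infty\Tau^f\in\GG_0$. To each $x\in G^\w$ one assigns a control function $f_x$, and the key combinatorial claim is: $f\not\le^* f_x$ implies $x\in T^f$. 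An unbounded family of size $\mathfrak b$ then covers $G^\w$ by $\mathfrak b$ sets from $\GG_0$; and any $X\subseteq G^\w$ with $|X|<\mathfrak d$ is covered by a single $T^f$ where $f$ dominates no $f_x$. Combined with Lemma~\ref{l5.4} (for any small subset of $\II$ one can find a dense tame $G_\delta\supseteq$ it) this gives $\cov(\GG_0)\le\mathfrak b$ and $\non(\sigma\GG_0)\ge\mathfrak d$. Then the two nontrivial links in the chain follow from $\add(\M)=\min\{\mathfrak b,\cov(\M)\}$, $\cof(\M)=\max\{\mathfrak d,\non(\M)\}$ and the already-established $\cov(\sigma\C_0)=\cov(\M)$, $\non(\sigma\C_0)=\non(\M)$ together with $\sigma\C_0\subseteq\sigma\GG_0$. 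If you want to rescue the spirit of your approach, the object to compare $\sigma\GG_0$ against is not $\M(M_0)$ but the $\sigma$-ideal $\sigma\K(\w^\w)$ generated by \emph{compact} subsets of $\w^\w$ (whose $\add$ and $\cof$ are exactly $\mathfrak b$ and $\mathfrak d$); that is implicitly what the escaping-function argument captures.
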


Theorems~\ref{t1.2}, \ref{t1.3}, \ref{t1.5} and \ref{t1.6} imply the following corollary.

\begin{corollary}\label{c1.7} Let $\I$ be a non-trivial topologically invariant $\sigma$-ideal $\I$ with analytic base on the Hilbert cube.
\begin{enumerate}
\item If $\I\subseteq\M$, then $\cov(\I)=\cov(\M)$, $\non(\I)=\non(\M)$, $\add(\I)\le\add(\M)$, and $\cof(\I)\ge\cof(\M)$.
\item If $\I\not\subseteq\M$, then $\add(\I)\le\cov(\I)\le\cov(\sigma\GG_0)\le\add(\M)\le\cof(\M)\le\non(\sigma\GG_0)\le \non(\I)\le\cof(\I).$
\end{enumerate}
\end{corollary}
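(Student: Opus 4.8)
The plan is to obtain Corollary~\ref{c1.7} by pure bookkeeping from the inclusions already established --- $\sigma\C_0\subseteq\I$ from Theorem~\ref{t1.2}, and $\sigma\GG_0\subseteq\I$ from Theorem~\ref{t1.3} in the case $\I\not\subseteq\M$ --- together with the cardinal (in)equalities of Theorems~\ref{t1.5} and \ref{t1.6}. Since $\I$ is non-trivial, $\II^\w=\bigcup\I\notin\I$, so the four characteristics of $\I$ are defined.

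First I would record the elementary facts to be used, all immediate from the definitions. (a) For a non-trivial ideal $\K$ on $X=\bigcup\K$: a covering family $\A\subseteq\K$ witnesses $\add(\K)\le|\A|$, hence $\add(\K)\le\cov(\K)$; and a cofinal $\B\subseteq\K$ satisfies $\bigcup\B=X\notin\K$, so choosing $x_B\in X\setminus B$ for each $B\in\B$ yields a set $\{x_B:B\in\B\}$ that cannot belong to $\K$, whence $\non(\K)\le\cof(\K)$. (b) For ideals $\K\subseteq\L$ on the same set, $\cov(\L)\le\cov(\K)$ and $\non(\K)\le\non(\L)$. (c) For the relative characteristics, $\K\subseteq\K'$ and $\L\subseteq\L'$ imply $\add(\K',\L)\le\add(\K,\L)\le\add(\K,\L')$ and $\cof(\K,\L')\le\cof(\K,\L)\le\cof(\K',\L)$, each step following because the family of admissible $\A$'s shrinks.

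For item (1), suppose $\I\subseteq\M$, so $\sigma\C_0\subseteq\I\subseteq\M$. Fact (b) gives $\cov(\M)\le\cov(\I)\le\cov(\sigma\C_0)$ and $\non(\sigma\C_0)\le\non(\I)\le\non(\M)$, which collapse to $\cov(\I)=\cov(\M)$ and $\non(\I)=\non(\M)$ by Theorem~\ref{t1.5}(1),(2). Using (c) twice, $\add(\I)=\add(\I,\I)\le\add(\I,\M)\le\add(\sigma\C_0,\M)$ and $\cof(\I)=\cof(\I,\I)\ge\cof(\I,\M)\ge\cof(\sigma\C_0,\M)$, and Theorem~\ref{t1.5}(3),(4) identifies the right-hand sides with $\add(\M)$ and $\cof(\M)$.

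For item (2), suppose $\I\not\subseteq\M$, so $\sigma\GG_0\subseteq\I$. The two outermost inequalities $\add(\I)\le\cov(\I)$ and $\non(\I)\le\cof(\I)$ are fact (a); the next pair $\cov(\I)\le\cov(\sigma\GG_0)$ and $\non(\sigma\GG_0)\le\non(\I)$ are fact (b) applied to $\sigma\GG_0\subseteq\I$. Finally $\cov(\sigma\GG_0)\le\cov(\GG_0)$ because $\GG_0\subseteq\sigma\GG_0$, so a cover of $\II^\w$ by members of $\GG_0$ is a cover by members of $\sigma\GG_0$; and Theorem~\ref{t1.6} supplies the chain $\cov(\GG_0)\le\add(\M)\le\cof(\M)\le\non(\sigma\GG_0)$. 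Concatenating everything gives the displayed inequalities. I do not expect a genuine obstacle --- the substance lies in Theorems~\ref{t1.2}--\ref{t1.6} --- the only things needing attention being the four monotonicity directions in (c) and the harmless distinction between $\cov(\GG_0)$ and $\cov(\sigma\GG_0)$.
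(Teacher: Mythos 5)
Your proposal is correct and is exactly the deduction the paper intends: the paper states the corollary as an immediate consequence of Theorems~\ref{t1.2}, \ref{t1.3}, \ref{t1.5} and \ref{t1.6}, and your bookkeeping (the basic monotonicity of $\add$, $\cov$, $\non$, $\cof$ and of the relative characteristics $\add(\cdot,\cdot)$, $\cof(\cdot,\cdot)$ under inclusions, applied to the chains $\sigma\C_0\subseteq\I\subseteq\M$ and $\sigma\GG_0\subseteq\I$, together with $\cov(\sigma\GG_0)\le\cov(\GG_0)$) supplies precisely the omitted verification.
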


Corollary~\ref{c1.7} implies that the cardinal characteristics of any non-trivial topologically invariant $\sigma$-ideals $\I\subseteq\M$ and $\J\not\subseteq\M$ with analytic base on the Hilbert cube fit in the following variant of Cicho\'n's diagram:
$$
\xymatrix{
&&&&\non(\sigma\GG_0)\ar[r]&\non(\J)\ar[r]&\cof(\J)\ar[r]&\mathfrak c\\
&&\non(\I)\ar@{=}[r]&\non(\M)\ar[r]&\cof(\M)\ar[r]\ar[u]&\cof(\I)\ar@/_1.1pc/[urr]\\
&&\add(\I)\ar[u]\ar[r]&\add(\M)\ar[u]\ar[r]&\cov(\M)\ar[u]\ar@{=}[r]&\cov(\I)\ar[u]&\\
\w_1\ar@/^1.1pc/[urr]\ar[r]&\add(\J)\ar[r]&\cov(\J)\ar[r]&\cov(\sigma\GG_0)\ar[u]\\
}
$$
\smallskip

The following example shows that the inequalities $\add(\I),\cov(\J)\le\add(\M)$ and $\cof(\M)\le\cof(\I),\non(\J)$ in this diagram can be strict. By an {\em arc} in $\II^\w$ we understand any subset $A\subseteq\II^\w$ homeomorphic to the closed interval $\II=[0,1]$.

\begin{example} \begin{enumerate}
\item The $\sigma$-ideal $\I$ generated by arcs in $\II^\w$ has\newline $\add(\I)=\w_1$, $\cov(\I)=\cov(\M)$, $\non(\I)=\non(\M)$ and $\cof(\I)=\mathfrak c$.
\item The $\sigma$-ideal $\J=\sigma\mathcal D_0\not\subseteq \M$ of countable dimensional subsets of\/ $\II^\w$ has\newline $\add(\J)=\cov(\J)=\w_1$ and $\non(\J)=\cof(\J)=\mathfrak c$.
\end{enumerate}
\end{example}

\begin{proof} 1. The equalities $\cov(\I)=\cov(\M)$ and $\non(\I)=\non(\M)$ follow from Corollary \ref{c1.7}.
To see that $\add(\I)=\w_1$ and $\cof(\I)=\mathfrak c$, observe that the Hilbert cube contains continuum many pairwise disjoint arcs and each arc contains at most countably many pairwise disjoint subarcs.
\smallskip

To see that $\non(\sigma\mathcal D_0)=\mathfrak c$, observe that each subset $A\subseteq\II^\w$ of cardinality $|A|<\mathfrak c$ is zero-dimensional. The equality  $\cov(\sigma\mathcal D_0)=\w_1$ is an old result of Smirnov, see \cite[5.1.B]{En2}.
\end{proof}


Next, we describe certain topologically invariant $\sigma$-ideals $\I$ with $\sigma$-compact base on the Hilbert cube whose cardinal characteristics coincide with the respective cardinal characteristics of the ideal $\M$. In the following definition for a compact topological space $X$ by $\K(X)$ we shall denote the family of all compact subsets of $X$ endowed with the Vietoris topology. The hyperspace $\K(X)$ is partially ordered by the inclusion relation.

\begin{definition}\label{d1.9} An ideal $\I$ on a topological space $X$
\begin{itemize}
\item is a {\em $G_\delta$-ideal} if the set $\I\cap\K(X)$ is of type $G_\delta$ in the hyperspace $\K(X)$;
\item has the {\em Solecki property $(*)$} if for any countable family $\A\subset\I\cap\K(X)$ the union $\bigcup\A$ is contained in a $G_\delta$-subset $G\subseteq X$ such that $\K(G)\subset \I$;
\item is a {\em $\sigma^{(*)}$-ideal\/} if there is a sequence $(\I_n)_{n\in\w}$ of $G_\delta$-ideals with the Solecki property $(*)$ on $X$ such that $\I=\{A\in\mathcal P(X):A\subset \bigcup\A$ for a countable subfamily $\A\subset\bigcup_{n\in\w}\I_n\cap\K(X)\}$.
\end{itemize}
\end{definition}

Definition~\ref{d1.9} implies that each $\sigma^{(*)}$-ideal is a $\sigma$-ideal with $\sigma$-compact base. It is easy to see that the countable intersection $\bigcap_{n\in\w}\I_n$ of $G_\delta$-ideals with the Solecki property $(*)$ is a $G_\delta$-ideal with the Solecki property $(*)$. This implies that a  finite intersection of $\sigma^{(*)}$-ideals is a $\sigma^{(*)}$-ideal.

By Proposition 2.1 of \cite{Sol}, an ideal $\I$ with the Solecki property $(*)$ on a compact metrizable space $X$ is a $G_\delta$-ideal if and only if the intersection $\I\cap \K(X)$ is analytic or coanalytic in the hyperspace $\K(X)$. This implies that many natural ideals with the Solecki property $(*)$ are $G_\delta$-ideals. A list of such ideals can be found in \cite{Sol}.
In \cite{Sol} S.Solecki established an important fact on the Tukey reducibility of $G_\delta$-ideals with the property $(*)$.

Let $A,B$ be two partially ordered sets. We say that $A$ is {\em Tukey reducible} to $B$ if there is a function $f:A\to B$ such that for each $b\in B$ the set $\{a\in A:f(a)\le b\}$ is upper bounded in $A$. The function $f$ will be called a {\em Tukey reduction} of $A$ to $B$. In \cite{Sol} Solecki proved that for each $G_\delta$-ideal $\I$ with the Solecki property $(*)$ on a compact metrizable space $X$ the partially ordered set $\I\cap\K(X)$ is Tukey reducible to the ideal $\NWD$ of nowhere dense sets in the Cantor cube $2^\w$. The same fact is true for $\sigma^{(*)}$-ideals.

\begin{proposition}\label{solecki} Every $\sigma^{(*)}$-ideal $\I$ on a compact metrizable space $X$ is Tukey reducible to the ideal $\NWD$, and has $\add(\I)\ge\add(\M)$ and $\cof(\I)\le\cof(\M)$.
\end{proposition}

\begin{proof} Since $\I$ is a $\sigma^{(*)}$-ideal, there is a sequence of $G_\delta$-ideals $(\I_n)_{n\in\w}$ with the Solecki property $(*)$ on $X$ such that  $\I=\{A\in\mathcal P(X):A\subset\bigcup\A$ for some countable family $\A\subset\bigcup_{n\in\w}\I_n\cap\K(X)\}$. Using the Axiom of Choice we can choose a function $f:\I\to \prod_{n\in\w}(\I_n\cap \K(X))^\w$ assigning to each set $A\in\I$ a double sequence $(A_{n,m})_{n,m\in\w}\in\prod_{n\in\w}(\I_n\cap \K(X))^\w$ such that $A\subset\bigcup_{n,m\in\w}A_{n,m}$. The function $f$ witnesses that the ideal $\I$ is Tukey reducible to the partially ordered set $\prod_{n\in\w}(\I_n\cap \K(X))^\w$.
Here the countable product $\prod_{n\in\w}P_n$ of partially ordered sets is endowed with coordinatewise partial order $(x_n)_{n\in\w}\le(y_n)_{n\in\w}$ iff $x_n\le y_n$ for all $n\in\w$.

By Theorem 4.1 of \cite{Sol}, for every $n\in\w$ there is a Tukey reduction $g_n:\I_n\cap\K(X)\to \NWD$. The maps $(g_n)_{n\in\w}$ produce a Tukey reduction $g=(g_n^\w)_{n\in\w}:\prod_{n\in\w}(\I_n\cap\K(X))^\w\to\NWD^{\w\times\w}$. By Theorem 3B(a) of \cite{fremlin}, the partially ordered sets $\NWD^\w$ and $\NWD$ are isomorphic. Consequently, there is a Tukey reduction $h:\NWD^{\w\times\w}\to\NWD$. The composition $\varphi=h\circ g\circ f:\I\to\NWD$ is a Tukey reduction of the ideal $\I$ to the ideal $\NWD$.
\smallskip

By a standard method it can be shown that the Tukey reducibility of $\I$ to $\NWD$ implies the inequalities $\add(\I)\ge\add(\M)$ and $\cof(\I)\le\cof(\M)$, see Theorem 1J of \cite{fremlin}.
\end{proof}

Proposition~\ref{solecki} and Corollary~\ref{c1.7}(1) imply:

\begin{corollary}\label{t1.10} Any non-trivial topologically invariant $\sigma^{(*)}$-ideal $\I$ on $\II^\w$ has  $$\add(\I)=\add(\M),\;\;\cov(\I)=\cov(\M),\;\;\non(\I)=\non(\M) \mbox{ \ and \ }\cof(\I)=\cof(\M).$$
\end{corollary}

Now we consider some examples of topologically invariant $\sigma^{(*)}$-ideals on $\II^\w$. For any cardinals $n,m\le\w$ let
\begin{itemize}
\item $\mathcal Z_n$ be the family of $Z_n$-sets in $\II^\w$;
\item $\overline{\mathcal D}_{m}$ be the family of closed subsets $A\subset \II^\w$ of topological dimension $\dim(A)\le m$;
\item $\sigma(\mathcal Z_n\cap\overline{\mathcal D}_{m})$ be the $\sigma$-ideal generated by the family $\mathcal Z_n\cap\overline{\mathcal D}_{m}$.
\end{itemize}
Observe that $\sigma(\mathcal Z_\w\cap\overline{\mathcal D}_0)$ coincides with the $\sigma$-ideal $\sigma\C_0$ while $\sigma(\mathcal Z_0\cap\overline{\mathcal D}_\w)$ coincides with the ideal $\M$ of meager subsets of $\II^\w$.

In the following corollary we calculate the cardinal characteristics of the $\sigma$-ideals $\sigma(\Z_n\cap\overline{\mathcal D}_m)$, $n,m\le\w$, thus answering Problem 2.6 of \cite{BCZ}.
The same answer was obtained independently by Solecki \cite{Solprivat}.

\begin{corollary}\label{c1.11} For every $n,m\le\w$ the $\sigma$-ideal $\I=\sigma(\Z_n\cap\overline{\mathcal D}_m)$ is a $\sigma^{(*)}$-ideal on $\II^\w$ with cardinal characteristics
$$\add(\I)=\add(\M),\;\;\cov(\I)=\cov(\M),\;\;\non(\I)=\non(\M),\;\;
\cof(\I)=\cof(\M).$$
\end{corollary}

\begin{proof} First we show that the ideal $\I$ is a $G_\delta$-ideal with the Solecki property $(*)$. Given a countable subfamily $\A\subset\I\cap \K(\II^\w)$, we shall find a $G_\delta$-set $G\subset\II^\w$ such that $\bigcup\A\subset G$ and $\K(G)\subset \Z_n$. We lose no generality assuming that $\A\subset \Z_n\cap\overline{\mathcal D}_m$. By the Sum Theorem \cite[1.5.2]{En2}, the union $\bigcup\A$ has topological dimension $\le m$ and by the Enlargement Theorem \cite[1.5.10]{En2}, the set $\bigcup\A$ is contained in a $G_\delta$-subset $G'\subset\II^\w$ of topological dimension $\le m$.

It follows that for every $Z_n$-set $A\in\A$ the set $F_A=\{f\in C(\II^n,\II^\w):f(\II^n)\cap A=\emptyset\}$ is open and dense in the function space $C(\II^n,\II^\w)$. The Baire Theorem guarantees that the set $F_\A=\bigcap_{A\in\A}F_A$ is a dense $G_\delta$-set in $C(\II^n,\II^\w)$. Fix a countable dense subset $\F\subset F_\A$ and observe that $G=G'\setminus\bigcup_{f\in F_\A}f(\II^n)$ is a required $G_\delta$-set in $\II^\w$ with $\K(G)\subset\Z_n\cap\overline{\mathcal D}_m$, which contains the union $\bigcup\A$. This witnesses that the $\sigma$-ideal $\I=\sigma(\Z_n\cap\overline{\mathcal D}_m)$ has the Solecki property $(*)$.

Next we show that $\I$ is a $G_\delta$-ideal. Observe that $\I\cap\K(\II^\w)=\mathcal Z_n\cap\overline{\mathcal D}_m$. By \cite{DMM}, the family $\overline{\mathcal D}_m$ is a $G_\delta$-set in the hyperspace $\K(\II^\w)$. To show that $\mathcal Z_n$ is a $G_\delta$-set in $\K(\II^\w)$, fix a countable base $(\U_n)_{n\in\w}$ of the topology of the function space $C(\II^n,\II^\w)$ and in each set $\U_n$ fix a function $f_n\in\U_n$. Then $\{f_n\}_{n\in\w}$ is a countable dense set in the function space $C(\II^n,\II^\w)$. It follows from
$$
\begin{aligned}
\Z_n&=\{A\in\K(\II^\w):\forall n\in\w\;\exists m\in\w\mbox{ such that
$f_m\in\U_n$ and $A\cap f_m(\II^n)=\emptyset$}\}=\\
&=\bigcap_{n\in\w}\bigcup_{f_m\in U_n}\{A\in\K(\II^\w):A\subset \II^\w\setminus f_m(\II^n)\},
\end{aligned}
$$
that the families $\Z_n$ is of type $G_\delta$ in the hyperspace $\K(\II^\w)$. Consequently, $\Z_n\cap\overline{\mathcal D}_m=\K(\II^\w)\cap\sigma(\Z_n\cap\overline{\mathcal D}_m)$ is a $G_\delta$-set in $\K(\II^\w)$ and $\I=\sigma(\Z_n\cap\overline{\mathcal D}_m)$ is a $G_\delta$-ideal on $\II^\w$.

Since $\I=\{A\in\mathcal P(\II^\w):A\subset\bigcup\A$ for a countable subfamily $\A\subset\I\cap\K(\II^\w)\}$, the ideal $\I$ is a $\sigma^{(*)}$-ideal. By Corollary~\ref{c1.11}, $\add(\I)=\add(\M)$, $\cov(\I)=\cov(\M)$, $\non(\I)=\non(\M)$, and $\cof(\I)=\cof(\M)$.
\end{proof}

\section{Some Properties of $Z$-sets in the Hilbert cube}

In this section we collect the necessary information on $Z$-sets in the Hilbert cube $\II^\w$.

We recall that a subset $A$ of a topological space $X$ is called a {\em $Z$-set} in $X$ if $A$ is closed in $X$ and for each open cover $\U$ of $X$ there is a map $f:X\to X\setminus A$, which is $\U$-near to the identity map $\id:X\to X$. All {\em maps} considered in this paper {\em are assumed to be continuous}. In contrast, {\em functions need not be continuous}. It follows that a subset of the Hilbert cube $\II^\w$ is a $Z$-set in $\II^\w$ if and only if it is a $Z_\w$-set in $\II^\w$.

A subset $A$ of a topological space $X$ is called a {\em $\sigma Z$-set} in $X$ is $A$ can be written as the countable union $A=\bigcup_{n=1}^\infty A_n$ of $Z$-sets.

The following known lemma (see \cite[Lemma 6.2.3(2)]{Mill}) gives many examples of $Z$-sets in the Hilbert cube.

\begin{lemma}\label{product-Z-set} For any closed proper subsets $A_n\subsetneqq \II$, $n\in\w$, the product $\prod_{n\in\omega}A_n$ is a $Z$-set in $\II^\w$.
\end{lemma}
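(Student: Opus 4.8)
The plan is to exhibit, for any open cover $\U$ of $\II^\w$, a map $f:\II^\w\to\II^\w\setminus\prod_{n\in\w}A_n$ which is $\U$-near to the identity, after first verifying that $\prod_{n\in\w}A_n$ is closed. Closedness is immediate since each $A_n$ is closed in $\II$ and a product of closed sets is closed in the product topology.

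\textbf{Key steps.} First I would use compactness of $\II^\w$ to replace the open cover $\U$ by a finite subcover and extract a Lebesgue number: there is $\e>0$ such that every subset of $\II^\w$ of diameter less than $\e$ (with respect to a fixed metric on $\II^\w$, say $d(x,y)=\sum_{n}2^{-n}|x(n)-y(n)|$) lies in some member of $\U$. Next, choose $N\in\w$ so large that $\sum_{n\ge N}2^{-n}<\e$; then any two points of $\II^\w$ agreeing on the first $N$ coordinates lie within $\e$ of each other, hence in a common element of $\U$. Since $A_N\subsetneqq\II$ is a closed proper subset, pick a point $t^*\in\II\setminus A_N$ and define $f:\II^\w\to\II^\w$ by $f(x)(n)=x(n)$ for $n\ne N$ and $f(x)(N)=t^*$; this is continuous (it is the identity in all coordinates but one, and constant in the $N$-th). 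For every $x$, the points $x$ and $f(x)$ agree on all coordinates except possibly the $N$-th, so in particular they agree on the first $N$ coordinates except the $N$-th — I should instead arrange $t^*$ in a coordinate $n\ge N$: pick $t^*\in\II\setminus A_N$ and set $f(x)(n)=x(n)$ for $n\ne N$, $f(x)(N)=t^*$, and observe $d(x,f(x))=2^{-N}|x(N)-t^*|\le 2^{-N}<\e$, so $\{x,f(x)\}$ has diameter $<\e$ and lies in some $U\in\U$. Finally, $f(x)(N)=t^*\notin A_N$ shows $f(x)\notin\prod_{n\in\w}A_n$, so $f$ maps into the complement; this establishes the $Z$-set property.

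\textbf{Main obstacle.} There is essentially no obstacle here; the only point requiring a little care is the bookkeeping with the metric and the choice of which coordinate to perturb, so that the displacement $d(x,f(x))$ is genuinely controlled by a single small term $2^{-N}$ rather than by the full tail. Choosing $N$ via the Lebesgue number and perturbing exactly the $N$-th coordinate makes the $\U$-nearness transparent. One could alternatively phrase the whole argument via a homeomorphism of $\II^\w$ pushing off the $Z$-set, but the direct construction above is the cleanest and uses only the definition of $Z$-set together with compactness of $\II^\w$.
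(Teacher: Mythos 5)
Your proof is correct and complete. The paper actually gives no proof of this lemma (it is stated as ``simple and known''); your direct construction — choosing $N$ so large that $2^{-N}$ is below a Lebesgue number of $\U$, picking $t^*\in\II\setminus A_N$, and perturbing only the $N$-th coordinate to $t^*$ — is the standard argument one would supply, and every step (closedness of the product, continuity of $f$, the distance estimate $d(x,f(x))\le 2^{-N}<\e$, and $f(x)(N)\notin A_N$ forcing $f(x)\notin\prod_n A_n$) checks out.
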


We shall often use the following powerful homogeneity property of the Hilbert cube \cite[11.1]{Chap}.

\begin{theorem}[Z-Set Unknotting Theorem]\label{unknot} Any homeomorphism $h:A\to B$ between $Z$-sets $A,B$ in the Hilbert cube $\II^\w$ extends to a homeomorphism of\/ $\II^\w$.
\end{theorem}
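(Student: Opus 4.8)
The plan is to reduce everything to an \emph{estimated} version of the statement and then recover the exact conclusion by an infinite-composition (``swindle'') argument. Fix an admissible complete metric $d$ on $\II^\w$ and equip the homeomorphism group $\HH(\II^\w)$ with the complete metric $\rho(f,g)=\sup_x d(f(x),g(x))+\sup_x d(f^{-1}(x),g^{-1}(x))$. The step to isolate is an \textbf{Estimated Homeomorphism Extension Theorem}: for every $\varepsilon>0$ there is $\delta>0$ such that any homeomorphism between $Z$-sets of $\II^\w$ moving points (in $d$) less than $\delta$ extends to a homeomorphism of $\II^\w$ that is $\varepsilon$-close to the identity in $\rho$. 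Granting this, an arbitrary $h\colon A\to B$ is handled by writing it as a composition of a rapidly shrinking sequence of such small partial homeomorphisms between $Z$-sets, extending each step with errors $\varepsilon_n\to 0$ fast, and taking the infinite composition: $\rho$-completeness of $\HH(\II^\w)$ makes the limit a genuine homeomorphism of $\II^\w$, and the error control forces it to restrict to $h$ on $A$.

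To break a global $h\colon A\to B$ into such small steps I would use the graph/Klee trick. Work inside $\II^\w\times\II^\w\cong\II^\w$. Since $A$ is a $Z$-set it is nowhere dense, so $A\subsetneq\II^\w$, and hence the slab $A\times\II^\w$ is a $Z$-set in $\II^\w\times\II^\w$ by a product argument in the spirit of Lemma~\ref{product-Z-set}; therefore so is its closed subset, the graph $\Gamma=\{(a,h(a)):a\in A\}$, and the two coordinate projections restrict to homeomorphisms $\Gamma\to A$ and $\Gamma\to B$. Using that $\II^\w$ is contractible, choose, continuously in $a$, a path in $\II^\w$ from a basepoint $q_0$ to $h(a)$; this produces a homotopy $t\mapsto\Gamma_t$ from $A\times\{q_0\}$ to $\Gamma$ through $Z$-embeddings of $A$ into $\II^\w\times\II^\w$ (each $\Gamma_t$ is closed in $A\times\II^\w$, hence a $Z$-set, and $a\mapsto(a,\gamma_a(t))$ is always an embedding, its first coordinate alone separating points). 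Chopping $[0,1]$ finely and applying the Estimated Homeomorphism Extension Theorem on each subinterval, then composing, yields an ambient homeomorphism of $\II^\w\times\II^\w$ carrying $A\times\{q_0\}$ onto $\Gamma$ compatibly with the projection; doing the same for $B$ and composing through $\Gamma$ reduces the theorem to the case where the $Z$-sets sit in standard position inside the product, and a last bit of bookkeeping with the self-product identification of $\II^\w$ disposes of that.

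The real work — and what I expect to be the main obstacle — is the Estimated Homeomorphism Extension Theorem itself. Its proof is an inductive construction of the extending homeomorphism carried out one block of coordinates at a time over $\II^\w=\prod_n\II$: at stage $n$ one modifies only finitely many coordinates, using that the $Z$-set can be pushed off the relevant compact face and that a partial homeomorphism of a face extends over that face, all the while keeping each adjustment inside a summable sequence of tolerances so the infinite composition converges. The subtle point is that the limit must be shown to be a homeomorphism rather than merely a continuous surjection, which is where the \textbf{Bing Shrinking Criterion} enters: one must arrange the approximating maps to be shrinkable — point-preimages uniformly small after an arbitrarily small further push — coherently with the $Z$-set push-off maps and, crucially, with $\delta$ depending only on $\varepsilon$ and not on the particular partial homeomorphism. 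This coordinated bookkeeping is the crux; once it is in place, the swindle of the first paragraph and the graph argument of the second are comparatively formal, and the theorem follows.
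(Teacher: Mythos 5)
The first thing to note is that the paper does not prove this statement at all: Theorem~\ref{unknot} is imported from Chapman's lectures \cite{Chap} (Theorem 11.1 there) and used as a black box, so there is no in-paper argument to compare yours with. Judged on its own terms, your proposal is a sensible outline of the classical strategy (a controlled/estimated extension theorem plus an infinite-composition argument, the graph trick to move $A$ into standard position, Bing shrinking to see that the limit map is a homeomorphism), but it is not a proof: the entire difficulty of the theorem is concentrated in the Estimated Homeomorphism Extension Theorem, and you yourself flag its proof as ``the real work'' and ``the crux'' without carrying it out. Describing that lemma in one sentence of intentions (an induction over finite blocks of coordinates with summable tolerances) and delegating the convergence and injectivity of the limit to an unexecuted application of the Bing Shrinking Criterion leaves a gap exactly where the theorem is hard; the swindle and the graph homotopy in your first two paragraphs are comparatively routine and would not survive without that ingredient.

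There is also a more local problem at the end of your reduction. After producing an ambient homeomorphism of $\II^\w\times\II^\w$ carrying $A\times\{q_0\}$ onto the graph $\Gamma$ (and similarly for $B$), you still must transfer the conclusion back to $\II^\w$ via an identification $\II^\w\cong\II^\w\times\II^\w$ under which $A\subset\II^\w$ corresponds to $A\times\{q_0\}$. That is not ``bookkeeping'': the assertion that the pairs $(\II^\w,A)$ and $(\II^\w\times\II^\w,A\times\{q_0\})$ are homeomorphic is itself an unknotting-type statement about the $Z$-set $A$ (essentially Anderson's absorption theorem that a $Z$-set can be pushed into an endface or the pseudo-boundary), so as written this step is circular, or at best incomplete. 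The standard treatments avoid the circle by first proving, with controlled moves, that every $Z$-set can be absorbed into a face, and then applying Klee's extension trick there; a self-contained proof along your lines would have to supply both that absorption lemma and the estimated extension theorem in full, and those two together are the actual content of the theorem.
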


A map $f:X\to Y$ will be called a {\em $Z$-embedding} if $f(X)$ is a $Z$-set in $Y$ and $f:X\to f(X)$ is a homeomorphism. The following universality property of the Hilbert cube was proved in \cite[11.2]{Chap}.

\begin{theorem}[Approximation by $Z$-embedding]\label{Z-approx} For any compact metrizable space $K$ the set of $Z$-embeddings is dense in the function space $C(K,\II^\w)$.
\end{theorem}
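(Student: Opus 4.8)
The plan is to factor the result into two statements: first that $C(K,\II^\w)$ contains a dense set of embeddings, and second that embeddings of a compactum into $\II^\w$ can be pushed, by an arbitrarily small perturbation, off any given $Z$-set, in particular off a suitable "face" of the cube so that the image becomes a $Z$-set. For the first step I would recall the classical fact that for a compact metrizable $K$ the space of embeddings is dense (indeed $G_\delta$) in $C(K,\II^\w)$; this is a standard consequence of the Baire category theorem applied in the Polish space $C(K,\II^\w)$, using that for each pair of disjoint closed sets $F_0,F_1\subseteq K$ the set $\{f: f(F_0)\cap f(F_1)=\emptyset\}$ is open dense, together with the fact that a continuous injection of a compactum is an embedding. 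Alternatively one cites this directly from \cite{Mill} or \cite{Chap}.

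For the second step, fix $f\in C(K,\II^\w)$, a compatible metric $d$ on $\II^\w$, and $\e>0$; by the first step choose an embedding $g$ with $d(f,g)<\e/2$, so it suffices to move $g$ by less than $\e/2$ to a $Z$-embedding. Write $\II^\w=\II^\w\times\II^\w$ via a homeomorphism that splits off the last coordinate factors, and compose $g$ with the inclusion $\II^\w\hookrightarrow \II^\w\times\{0\}$; more carefully, I would use the standard trick: consider the map $G:K\to \II^\w\times\II$, $G(x)=(g(x),0)$, followed by an identification $\II^\w\times\II\cong\II^\w$. The image $G(K)\subseteq \II^\w\times\{0\}$, and $\II^\w\times\{0\}=\prod_{n}A_n$ with $A_n=\II$ for all but one coordinate and $A_{n_0}=\{0\}\subsetneqq\II$; by Lemma~\ref{product-Z-set} this product is a $Z$-set in $\II^\w\times\II\cong\II^\w$. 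Hence $G$ is a $Z$-embedding. Finally, choosing the identification $\II^\w\times\II\cong\II^\w$ so that it is the identity on $\II^\w\times\{0\}$ up to an arbitrarily small motion, or rather reparametrizing the new coordinate to have diameter $<\e/2$, one gets $d(G,g)<\e/2$ after reidentifying, so $d(G,f)<\e$.

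The main obstacle is the bookkeeping in the last paragraph: one must arrange the homeomorphism $\II^\w\times\II\cong\II^\w$ (equivalently, the reshuffling of coordinates together with the insertion of the extra collapsed factor) to be $\e$-close to the "forget the extra coordinate" map, uniformly on the compactum. This is handled by choosing the product metric on $\II^\w$ of the form $d((x_n),(y_n))=\sum_n 2^{-n}|x_n-y_n|$, inserting the new coordinate with a very small weight, and noting that on $\II^\w\times\{0\}$ the insertion changes nothing; the only genuine cost is the renumbering of the tail coordinates, which can be absorbed into the weights. I would also remark that density (not merely nonemptiness) is what the later sections need, and that the same argument shows the set of $Z$-embeddings is in fact a dense $G_\delta$ in $C(K,\II^\w)$, since being a $Z$-embedding is a $G_\delta$ condition.
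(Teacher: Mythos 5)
The paper does not actually prove Theorem~\ref{Z-approx}; it is quoted from \cite[11.2]{Chap}, so there is no in-paper argument to compare against. Your proposal is essentially the standard textbook proof of that theorem and is correct in substance: approximate by an embedding, then push the image into a proper ``face'' of the cube, which is a $Z$-set by Lemma~\ref{product-Z-set} (using also that a closed subset of a $Z$-set is a $Z$-set, which you use implicitly and should state). Two small points. First, the sentence asking the identification $\II^\w\times\II\cong\II^\w$ to ``be the identity on $\II^\w\times\{0\}$'' is not literally meaningful since domain and codomain differ; what saves you is exactly the weighted-insertion/renumbering estimate you sketch afterwards, so that part should be phrased as: inserting the collapsed coordinate at a position $n_0$ with $\sum_{k\ge n_0}2^{-k}<\e/2$ moves every point by less than $\e/2$ in the metric $d(x,y)=\sum_n 2^{-n}|x_n-y_n|$. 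Second, the two steps can be merged, which removes the identification issue altogether: fix an embedding $e:K\to\II^\w$, choose $n$ with $\sum_{k>n}2^{-k}<\e$, and define $g$ by $g_k=f_k$ for $k\le n$, $g_{n+1}\equiv 0$, and $g_{n+1+j}=e_j$ for $j\ge 1$; then $g$ is injective (hence an embedding of the compactum $K$), $d(f,g)<\e$, and $g(K)$ is a closed subset of $\prod_k A_k$ with $A_{n+1}=\{0\}\subsetneqq\II$, a $Z$-set by Lemma~\ref{product-Z-set}. Your closing remark that the $Z$-embeddings form a dense $G_\delta$ is also correct (both ``injective'' and ``image misses some map $1/n$-close to the identity'' are $G_\delta$/open conditions by compactness), though it is not needed elsewhere in the paper.
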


We shall apply the $Z$-Set Unknotting Theorem to prove the following tameness lemma.

\begin{lemma}\label{l2.4} For each zero-dimensional $Z$-set $A$ in the Hilbert cube $\II^\w$ and every open cover $\U$ of $\II^\w$ there is a finite cover $B_1,\dots,B_n$ of $A$ by open subsets of\/ $\II^\w$ such that
\begin{enumerate}
\item for every $i\le n$ the closure $\bar B_i$ is homeomorphic to $\II^\w$, is contained in some set $U\in\U$ and the boundary $\partial B_i$ of $B_i$ in $\II^\w$ is a $Z$-set in $\bar B_i$;
\item for any distinct numbers $i,j\le n$ the closures $\bar B_i$ and $\bar B_j$ are disjoint.\end{enumerate}
\end{lemma}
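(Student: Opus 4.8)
The plan is to reduce the problem to a statement about a single point and then amplify by compactness and the $Z$-Set Unknotting Theorem. First I would fix a zero-dimensional $Z$-set $A$ and an open cover $\U$ of $\II^\w$. Using zero-dimensionality of $A$, for each point $a\in A$ I would choose a clopen (in $A$) neighbourhood $V_a$ of $a$ with $\diam V_a$ small enough that $V_a$ is contained in some member of $\U$; by compactness of $A$ finitely many such sets $V_{a_1},\dots,V_{a_m}$ cover $A$, and by the standard disjointification of clopen sets in a zero-dimensional compactum I may assume $V_{a_1},\dots,V_{a_m}$ are pairwise disjoint and clopen in $A$. Thus $A=\bigsqcup_{i\le m}V_i$ with each $V_i$ a Cantor set or finite, lying in a member of $\U$.

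The heart of the argument is: for each clopen piece $V_i$, which is a $Z$-set in $\II^\w$ contained in some $U_i\in\U$, produce an open set $B_i\subseteq U_i$ whose closure $\bar B_i$ is homeomorphic to $\II^\w$, which contains $V_i$, and whose boundary $\partial B_i$ is a $Z$-set in $\bar B_i$. To do this I would first fix a tame ball $W_i$ with $\bar W_i\subseteq U_i$ (tame balls form a base, by \cite[12.2]{Chap}). Inside $\bar W_i\cong\II^\w$ I would place a "model" of the desired configuration: using Lemma~\ref{product-Z-set} the set $\{0\}^\w$ (or a Cantor set sitting inside a face) is a $Z$-set in $\bar W_i$, and it is easy to exhibit an open subset $O$ of $\II^\w$, homeomorphic to $\II^\w$ via a tame ball construction, containing the prescribed $Z$-set and with $\partial O$ a $Z$-set in $\bar O$ — for instance take $O$ to be the interior of a small tame ball around the point, after first moving $V_i$ off any obstructions. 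Then, since $V_i$ and this model $Z$-set are both $Z$-sets in $\bar W_i\cong\II^\w$ and are homeomorphic (both zero-dimensional compacta of the same topological type, or one embeds in the other), the $Z$-Set Unknotting Theorem~\ref{unknot} (applied inside $\bar W_i$) gives a homeomorphism of $\bar W_i$ carrying $V_i$ into $O$; transporting $O$ back yields the required $B_i$ with $\bar B_i\subseteq\bar W_i\subseteq U_i$.

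Finally I would need the closures $\bar B_i$ to be pairwise disjoint. Here I would not try to keep disjointness throughout the construction but instead restore it at the end: shrink each $B_i$ slightly. Since the $V_i$ are pairwise disjoint compacta, there is $\e>0$ with the $\e$-neighbourhoods $N_\e(V_i)$ pairwise disjoint; by choosing the tame balls $W_i$ (and hence $B_i$) inside $N_\e(V_i)$ from the start — which is possible because we may take $W_i$ as small as we like around each point of $V_i$ and then take a finite union of such small balls, re-running the Unknotting step in each component — one obtains $\bar B_i\subseteq N_\e(V_i)$, forcing disjointness of the closures while retaining $B_i\subseteq U_i$ and $V_i\subseteq B_i$. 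The main obstacle I anticipate is the bookkeeping in the second step: making sure the "model" open set $O$ genuinely has $\bar O\cong\II^\w$ \emph{and} $\partial O$ a $Z$-set in $\bar O$, and that the unknotting can be carried out while respecting the smallness constraint; this is where one must be careful to invoke tameness of balls and the fact that boundaries of tame balls are $Z$-sets, rather than attempting an ad hoc construction.
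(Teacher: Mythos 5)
There is a genuine gap at the central step. You implicitly require a single tame ball $W_i$ with $V_i\subset W_i$ and $\bar W_i\subset U_i$ (this is needed for ``$V_i$ and the model $Z$-set are both $Z$-sets in $\bar W_i$''), but the statement that tame balls form a base only gives small tame balls around \emph{points}; it does not give a tame ball engulfing a prescribed compactum inside a prescribed neighbourhood. Indeed, such a $W_i$ need not exist: a tame ball has connected closure, so if $V_i$ is spread across several components of $U_i$ (take $V_i=\{0\}^\w\cup\{(1,0,0,\dots)\}$ and $U_i$ a small neighbourhood of $V_i$), no tame ball can separate $V_i$ from $\II^\w\setminus U_i$. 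Your proposed remedy --- refine further, take finite unions of tiny balls, and re-run the unknotting in each piece --- is circular: to get disjoint closures you need the clopen partition fine relative to the balls, but the admissible balls (and the $\delta$ giving disjoint $\delta$-neighbourhoods) depend on the partition you chose; nothing ensures the recursion terminates, and the finite union itself is not a tame ball, so the unknotting inside $\bar W_i\cong\II^\w$ no longer applies directly. A secondary, smaller point: even granting $V_i\subset W_i$, that $V_i$ is a $Z$-set \emph{in $\bar W_i$} (not merely in $\II^\w$) needs a word of justification before the relative Unknotting Theorem can be invoked.

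The paper sidesteps all of this by applying the $Z$-Set Unknotting Theorem once, globally: since $A$ is a zero-dimensional compactum it embeds onto a $Z$-set $A'\subset[0,1]\times\{x_0\}\subset\II\times\II^\IN$ (a $Z$-set by Lemma~\ref{product-Z-set}), and any homeomorphism $A\to A'$ extends to $h\in\HH(\II^\w)$. For a compactum $A'$ lying in a straight interval the lemma is elementary: cover the nowhere dense projection of $A'$ to the first coordinate by finitely many disjoint open intervals $(a_i,b_i)$ with endpoints off the projection and small length, and set $B_i'=(a_i,b_i)\times W$ for one small box neighbourhood $W$ of $x_0$; then $\bar B_i'\cong\II\times\II^\w\cong\II^\w$, $\partial B_i'$ is a union of faces and hence a $Z$-set in $\bar B_i'$, and the $\bar B_i'$ are pairwise disjoint. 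Pull back along $h^{-1}$ against the cover $h(\U)$. This global straightening removes the enclosure-by-tame-balls problem entirely; your local version would need an additional lemma (``every sufficiently small compactum lies in a tame ball of comparable size'') that you have not supplied and that, as stated with tame balls, is not obviously true.
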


\begin{proof} This lemma is obvious for zero-dimensional sets contained in straight intervals of the form $[0,1]\times\{x_0\}\subseteq \II\times\II^{\IN}=\II^\w$. The general case can be reduced to this special case with help of the $Z$-set Unknotting Theorem~\ref{unknot}.
\end{proof}

\begin{proposition}\label{Zmin} A Cantor set $C$ in the Hilbert cube $\II^\w$ is a minimal Cantor set if and only if it is a $Z$-set in $\II^\w$.
\end{proposition}

\begin{proof} To prove the ``if'' part, assume that a Cantor set $C\subset\II^\w$ is a $Z$-set in $\II^\w$. To prove that $C$ is a minimal Cantor set, fix any Cantor set $B\subset\II^\w$.
We claim that $B$ contains a Cantor $Z$-set $K\subset B$.

Using Theorem~\ref{Z-approx}, in the function space $C(\II^\w,\II^\w)$ we can choose a countable dense subset $\{f_n\}_{n\in\w}$ consisting of $Z$-embeddings. If for some $n\in\w$ the intersection $B\cap f_n(\II^\w)$ is uncountable, then it contains a Cantor set $K\subset B\cap f_n(\II^\w)$ according to Souslin's Theorem \cite[29.1]{Ke}. Since $f_n(\II^\w)$ is a $Z$-set in $\II^\w$, the Cantor set $K\subset f_n(\II^\w)$ is a $Z$-set in $\II^\w$ too. If each intersection $B\cap f_n(\II^\w)$ is at most countable, then the set $B\setminus \bigcup_{n\in\w}f_n(\II^\w)$ is uncountable and hence contains a Cantor set $K\subset B\setminus \bigcup_{n\in\w}f_n(\II^\w)$ according to Souslin's Theorem  \cite[29.1]{Ke}. The density of the set $\{f_n\}_{n\in\w}$ in $C(\II^\w,\II^\w)$ guarantees that $K$ is a $Z$-set in $\II^\w$.
Using $Z$-set Unknotting Theorem~\ref{unknot}, we can find a homeomorphism $h$ of $\II^\w$ such that $h(C)=K\subset B$. This homeomorphism witnesses that $C$ is a minimal Cantor set in $\II^\w$.
\smallskip

The ``only if'' part follows from the existence of a Cantor $Z$-set in $\II^\w$.
\end{proof}

\section{Proof of Theorem~\ref{t1.4}}\label{s:t1.4}

We shall derive Theorem~\ref{t1.4} from five lemmas proved below. In these lemmas by $X$ we denote the Hilbert cube $\II^\w$ and by $\HH(X)$ its homeomorphism group endowed with the compact-open topology.
A neighborhood base of this topology at each $h\in\HH(X)$ consists of the sets
$$B(h,\U)=\{f\in\HH(X):(h,f)\prec\U\},$$
where $\U$ runs over all open covers of $X$. For a cover $\U$ of $X$ and maps $f,g:X\to X$, we write $(f,g)\prec\U$ and say that $f$ and $g$ are {\em $\U$-near} if for every $x\in X$ the set $\{f(x),g(x)\}$ is contained in some set $U\in\U$.

It is well-known that for any metric $d$ generating the topology of $X$, the compact-open topology on $\HH(X)$ is generated by the complete metric
$$\hat d(f,g)=\sup_{x\in X}d(f(x),g(x))+\sup_{y\in X}d(f^{-1}(y),g^{-1}(y)).$$
This implies that $\HH(X)$ is a Polish topological group, see e.g. \cite[I.\S5]{Chap}.

In the following lemmas, for subsets $F,G\subseteq X$ and a natural number $k\in\IN$, we shall establish some properties of the subset
$$H_{F,G}^k=\big\{(h_i)_{i=1}^k\in\HH(X)^k:F\subseteq\textstyle{\bigcup\limits_{i=1}^k} h_i(G)\big\}$$
of the $k$-th power $\HH(X)^k$ of the homeomorphism group $\HH(X)$.

\begin{lemma}\label{l8.4} For a closed set  $F\subseteq X$ and an open set $U\subseteq X$  the set $$H^{k}_{F,U}=\{(h_i)_{i=1}^k\in\HH(X)^k:F\subseteq\textstyle{\bigcup\limits_{i=1}^k} h_i(U)\}$$
 is open in $\HH(X)^{k}$ for every $k\in\IN$.
\end{lemma}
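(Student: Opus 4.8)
The plan is to show that the complement of $H^k_{F,U}$ is closed, equivalently that $H^k_{F,U}$ is open, by exhibiting for each tuple $(h_i)_{i=1}^k \in H^k_{F,U}$ an open neighborhood still contained in $H^k_{F,U}$. The natural approach uses the compactness of $F$ (it is a closed subset of the compact space $X = \II^\w$) together with a Lebesgue-number type argument for the open cover $\{h_i(U) : i \le k\}$ restricted near $F$.

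First I would fix $(h_i)_{i=1}^k \in H^k_{F,U}$, so that $F \subset \bigcup_{i=1}^k h_i(U)$. Since each $h_i(U)$ is open and $F$ is compact, by a standard shrinking argument there is a continuous (or just an open) cover refinement: for each $x \in F$ pick $i(x) \le k$ with $x \in h_{i(x)}(U)$, and since $h_{i(x)}^{-1}(x) \in U$ with $U$ open, choose using a metric $d$ on $X$ a radius so that the ball $B_d(h_{i(x)}^{-1}(x), 2\delta_x) \subset U$. Then $\{h_{i(x)}(B_d(h_{i(x)}^{-1}(x),\delta_x)) : x \in F\}$ is an open cover of the compact set $F$, so finitely many of these balls, say centered at $x_1,\dots,x_m$, already cover $F$. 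Let $\delta = \min_j \delta_{x_j} > 0$ and let $\U_\delta$ be the open cover of $X$ by all $d$-balls of radius $\delta$.

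Now I claim that the neighborhood $\prod_{i=1}^k B(h_i, \U_\delta)$ of $(h_i)_{i=1}^k$ in $\HH(X)^k$ is contained in $H^k_{F,U}$. Indeed, take $(f_i)_{i=1}^k$ with $(f_i, h_i) \prec \U_\delta$ for all $i$, which forces $d(f_i(y), h_i(y)) < 2\delta$ for all $y \in X$ (two points in a common $\delta$-ball are within $2\delta$), and also $d(f_i^{-1}(z), h_i^{-1}(z)) < 2\delta$ for all $z$. Given $x \in F$, pick $j$ with $x \in h_{i(x_j)}(B_d(h_{i(x_j)}^{-1}(x_j), \delta_{x_j}))$; writing $i = i(x_j)$ and $y = h_i^{-1}(x)$ we have $d(y, h_i^{-1}(x_j)) < \delta_{x_j}$. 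Then $d(f_i^{-1}(x), h_i^{-1}(x_j)) \le d(f_i^{-1}(x), h_i^{-1}(x)) + d(h_i^{-1}(x), h_i^{-1}(x_j)) < 2\delta + \delta_{x_j} \le 2\delta_{x_j} + \delta_{x_j}$; after re-choosing the radii in the first step to be small enough (e.g. taking $\delta_x$ so that $B_d(h_{i(x)}^{-1}(x), 4\delta_x) \subset U$ and working with $\delta = \min \delta_{x_j}$), this gives $f_i^{-1}(x) \in U$, hence $x \in f_i(U) \subset \bigcup_{i=1}^k f_i(U)$. Thus $F \subset \bigcup_{i=1}^k f_i(U)$ and $(f_i)_{i=1}^k \in H^k_{F,U}$.

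The main obstacle is purely bookkeeping: making the triangle-inequality estimates line up so that a single $\delta$ (coming from a single open cover $\U_\delta$ of $X$, since the compact-open topology neighborhoods are indexed by covers of all of $X$, not just of $F$) simultaneously controls the finitely many local balls covering $F$ and the perturbation of each $h_i$ and $h_i^{-1}$. Once the radii $\delta_x$ are chosen with enough slack (a factor of $4$ rather than $2$) and $\delta$ is taken as their minimum, everything closes up. I would present this cleanly by first stating the metric characterization of $B(h,\U_\delta)$-neighborhoods, then doing the compactness reduction, then the one-line triangle inequality.
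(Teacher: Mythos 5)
Your approach---a direct compactness argument via a Lebesgue number for the cover $\{h_i(U)\}$ of $F$---is a genuinely different route from the paper's, which argues by induction on $k$: the base case $k=1$ is essentially the definition of the compact-open topology together with the group structure of $\HH(X)$, and the inductive step separates the disjoint closed sets $\bigcap_{i\le k}f_i(X\setminus U)$ and $F\cap f_{k+1}(X\setminus U)$ by open sets $V$, $W$ and then applies the hypothesis to $H^k_{X\setminus V,U}$ and $H^1_{X\setminus W,U}$. Your version is hands-on and avoids induction; the paper's is metric-free and a bit shorter.

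There is a real gap, though: you assert that $(f_i,h_i)\prec\U_\delta$ ``forces $d(f_i^{-1}(z),h_i^{-1}(z))<2\delta$ for all $z$,'' and your triangle inequality depends on this. That implication is false. The relation $(f_i,h_i)\prec\U_\delta$ only bounds $d(f_i(y),h_i(y))$; a bound on $d(f_i^{-1}(z),h_i^{-1}(z))$ then requires a modulus of continuity for $h_i^{-1}$, and the resulting estimate is that modulus evaluated at $2\delta$, not $2\delta$ itself. Adding ``more slack'' in the $\delta_x$'s cannot repair this, since the modulus may distort distances by an arbitrary factor; one would instead have to shrink $\U_\delta$ further using the (finitely many) moduli of $h_1^{-1},\dots,h_k^{-1}$, an extra step your write-up omits.

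The cleanest fix eliminates the problem altogether, because you never need to compare $f_i^{-1}$ with $h_i^{-1}$. Fix $(h_i)_{i=1}^k\in H^k_{F,U}$. Since $F$ is compact and covered by the open sets $h_i(U)$, there is a Lebesgue number $\e>0$ so that every $x\in F$ has $B(x,\e)\subseteq h_i(U)$ for some $i\le k$. Suppose $\sup_{y\in X}d(f_i(y),h_i(y))<\e$ for each $i\le k$, a basic open condition on $(f_i)_{i=1}^k$ in $\HH(X)^k$. Given $x\in F$, choose such an $i$, put $y=f_i^{-1}(x)$, and observe
$$d(h_i(y),x)=d(h_i(y),f_i(y))<\e,$$
so $h_i(y)\in B(x,\e)\subseteq h_i(U)$, hence $y\in U$ (as $h_i$ is a bijection) and $x\in f_i(U)$. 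Therefore $F\subseteq\bigcup_{i=1}^k f_i(U)$. Only the forward distances $d(f_i(y),h_i(y))$ ever appear, so the $\U_\delta$-type neighborhoods you started with are adequate as is.
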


\begin{proof} The proof is by induction on $k\in\IN$.

First we verify this lemma for $k=1$. Since $\HH(X)$ is a topological group, it suffices to check that the set
$$
(H^1_{F,U})^{-1}=\{h\in\HH(X):F\subseteq h(U)\}^{-1}=\{h\in\HH(X):h^{-1}(F)\subseteq U\}^{-1}=\{h\in\HH(X):h(F)\subseteq U\}
$$is open in $\HH(X)$. But this follows from the definition of the compact-open topology on $\HH(X)$.

Now assume that the lemma has been proved for some $k\in\IN$. To prove it for $k+1$, fix any sequence of homeomorphisms
$(f_i)_{i=1}^{k+1}\in H^{k+1}_{F,U}$. Then $F\subseteq\bigcup_{i=1}^{k+1}f_i(U)$ and hence $\bigcap_{i=1}^{k+1}f_i(X\setminus U)\subseteq X\setminus F$.
It follows that the closed sets $B=\bigcap_{i=1}^kf_i(X\setminus U)$ and   $C=F\cap f_{k+1}(X\setminus U)$ are disjoint and hence have disjoint open neighborhoods $V$ and $O(C)$ in $X$.
Then $V$ and $W=O(C)\cup(X\setminus F)$ are two open sets such that
\begin{itemize}
\item $V\cap W\subseteq X\setminus F$;
\item $\bigcap_{i=1}^k f_i(X\setminus U)=B\subseteq V$, which is equivalent to $X\setminus V\subseteq \bigcup_{i=1}^k f_i(U)$;
\item $f_{k+1}(X\setminus U)\subseteq W$, which is equivalent $X\setminus W\subseteq f_{k+1}(U)$.
\end{itemize}
By the inductive assumption, the sets $H^k_{X\setminus V,U}\ni(f_i)_{i=1}^k$ and $H^1_{X\setminus W,U}\ni f_{k+1}$  are open and so is their product
$$H^k_{X\setminus V,U}\times H^1_{X\setminus W,U}\subseteq \HH(X)^k\times\HH(X)=\HH(X)^{k+1},$$
which contains the sequence $(f_i)_{i=1}^{k+1}$ and lies in the set $\HH^{k+1}_{F,U}$. This shows that the set $H^{k+1}_{F,U}$ is open in $\HH(X)^{k+1}$.
\end{proof}

\begin{lemma}\label{l8.5} For an $F_\sigma$-set  $F\subseteq X$ and a $G_\delta$-set $G\subseteq X$  the set $$H^{k}_{F,G}=\big\{(h_i)_{i=1}^k\in\HH(X)^k:F\subseteq\textstyle{\bigcup\limits_{i=1}^k} h_i(G)\big\}$$
 is of type $G_\delta$ in $\HH(X)^{k}$ for every $k\in\IN$.
\end{lemma}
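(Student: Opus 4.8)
The plan is to write the $G_\delta$-set $G$ as a decreasing intersection of open sets and then reduce the statement to Lemma~\ref{l8.4} by a compactness argument. Write $G=\bigcap_{m\in\w}U_m$ where each $U_m$ is open in $X$; by replacing $U_m$ with $\bigcap_{j\le m}U_j$ we may assume $U_0\supseteq U_1\supseteq\cdots$. The naive guess is that $H^k_{F,G}=\bigcap_{m\in\w}H^k_{F,U_m}$, which would finish the proof immediately by Lemma~\ref{l8.4}. This is false in general: a sequence $(h_i)_{i=1}^k$ with $F\subseteq\bigcup_i h_i(U_m)$ for every $m$ need not satisfy $F\subseteq\bigcup_i h_i(G)$, since a point $x\in F$ could be covered by $h_{i(m)}(U_m)$ with the index $i(m)$ varying with $m$, so that no single $h_i(G)$ contains $x$. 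So a more careful description is needed.

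First I would introduce, for each finite tuple $(V_1,\dots,V_k)$ of open subsets of $X$, the open set $H^k_{F;(V_1,\dots,V_k)}=\{(h_i)_{i=1}^k:F\subseteq\bigcup_{i=1}^k h_i(V_i)\}$; an easy variant of the argument in Lemma~\ref{l8.4} (or a direct reduction to it applied coordinatewise) shows this set is open in $\HH(X)^k$. The key claim is then
$$H^k_{F,G}=\bigcap_{m\in\w}\ \bigcup_{\substack{(m_1,\dots,m_k)\in\w^k\\ \min_i m_i\ge m}} H^k_{F;(U_{m_1},\dots,U_{m_k})}.$$
The inclusion $\subseteq$ is trivial (take $m_1=\cdots=m_k=m$ and use $U_m\supseteq G$). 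For $\supseteq$, suppose $(h_i)_{i=1}^k$ lies in the right-hand side. The sets $X\setminus U_m$, $m\in\w$, form an increasing exhaustion of $X\setminus G$, and $X$ is compact; the heart of the argument is to promote the hypothesis "for every $m$ there is a tuple with all indices $\ge m$" to "there is a single tuple $(m_1,\dots,m_k)$ with $F\subseteq\bigcup_i h_i(U_{m_i})$ and all $m_i$ arbitrarily large", and then let the $m_i\to\infty$ simultaneously to conclude $F\subseteq\bigcup_i h_i(G)$. Concretely: fix $m$; the compact set $F$ is covered by the open sets $h_i(U_{m_i})$, so there is a finite subcover, hence (enlarging the indices and using the nesting $U_{m+1}\subseteq U_m$) a single tuple $(m_1,\dots,m_k)$ with $\min_i m_i\ge m$ and $F\subseteq\bigcup_i h_i(U_{m_i})$. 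Now for $x\in F$ choose, for each $m$, an index $i(m)\le k$ with $x\in h_{i(m)}(U_{m_i(m)})$ and $m_{i(m)}\ge m$; since $i(m)$ takes values in the finite set $\{1,\dots,k\}$, some value $i_0$ occurs for infinitely many $m$, whence $x\in h_{i_0}(U_j)$ for arbitrarily large $j$, so $x\in h_{i_0}(\bigcap_j U_j)=h_{i_0}(G)$ because the $U_j$ decrease. Thus $F\subseteq\bigcup_i h_i(G)$, proving the claim.

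Granting the claim, the right-hand side is a countable intersection (over $m$) of countable unions (over tuples in $\w^k$) of open sets, hence $G_\delta$ in $\HH(X)^k$, and the lemma follows. The main obstacle is exactly the point where the false naive identity must be replaced: one has to track \emph{which} homeomorphism covers a given point of $F$ and exploit the pigeonhole principle on the finite index set $\{1,\dots,k\}$ together with the nesting of the $U_m$ to pass from "covered at every finite level" to "covered by a single $h_i(G)$". Everything else — openness of the sets $H^k_{F;(V_1,\dots,V_k)}$, and bookkeeping with finite subcovers — is a routine adaptation of Lemma~\ref{l8.4}.
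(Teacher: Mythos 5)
Your proof is correct and, stripped of the detour, is essentially the paper's proof: exhaust $G$ by a decreasing sequence of open sets, reduce to Lemma~\ref{l8.4}, and use a pigeonhole argument on the finite index set $\{1,\dots,k\}$. However, you open with a claim that is simply wrong, and then spend the rest of the argument working around a non-problem. You assert that the naive identity $H^k_{F,G}=\bigcap_{m\in\w}H^k_{F,U_m}$ ``is false in general,'' worrying that a point $x\in F$ could be covered by $h_{i(m)}(U_m)$ with $i(m)$ wandering as $m$ grows. But the very pigeonhole step you carry out at the end of your argument refutes that worry: since $i(m)$ takes only the finitely many values $1,\dots,k$ and the $U_m$ are nested decreasing, some index $i_0$ recurs infinitely often, so $x\in h_{i_0}(U_m)$ for arbitrarily large $m$, hence for all $m$, hence $x\in h_{i_0}(G)$. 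That is exactly a proof that the naive identity \emph{is} true. The paper establishes precisely this, in the cleaner form of the set-theoretic identity
$$\bigcup_{i=1}^k h_i(G)=\bigcup_{i=1}^k\bigcap_{j\in\w}h_i(U_j)=\bigcap_{j\in\w}\bigcup_{i=1}^k h_i(U_j),$$
whose nontrivial inclusion $\supseteq$ is the same finite-range pigeonhole together with the nesting; it then writes $H^k_{F,G}=\bigcap_j H^k_{F_j,U_j}$ (in fact for $F$ an $F_\sigma$-set, written as an increasing union of closed sets $F_j$, which subsumes your closed case by taking $F_j=F$), and applies Lemma~\ref{l8.4}. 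Your replacement identity is also correct, but it collapses trivially to the naive one: if $\min_i m_i\ge m$ then $U_{m_i}\subseteq U_m$ gives $H^k_{F;(U_{m_1},\dots,U_{m_k})}\subseteq H^k_{F,U_m}$, while taking $m_1=\cdots=m_k=m$ gives the reverse inclusion, so the inner union over tuples is just $H^k_{F,U_m}$. So your ``more careful description'' buys nothing. Nothing you prove is false, but the false intermediate claim and the resulting detour should be cut; once you notice that your own pigeonhole settles the naive guess, the proof shrinks to the paper's.
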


\begin{proof} Write the $F_\sigma$-set $F$ as the union $F=\bigcup_{j\in\w}F_j$ of a non-decreasing sequence $(F_j)_{j\in\w}$ of closed subsets of $X$ and write the $G_\delta$-set $G$ as the intersection $G=\bigcap_{j\in\w}U_j$ of a non-increasing sequence $(U_j)_{j\in\w}$ of open subsets of $X$. Then for any sequence of homeomorphisms $(h_i)_{i=1}^k\in\HH(X)^{k}$ we get
$$\bigcup_{i=1}^k h_i(G)=\bigcup_{i=1}^k\bigcap_{j\in\w}h_i(U_j)=\bigcap_{j\in\w}\bigcup_{i=1}^k h_i(U_j).$$ Consequently, the set
$$
H^{k}_{F,G}=
\big\{(h_i)_{i=1}^k\in\HH(X)^{k}: \bigcup_{j\in\w}F_j\subseteq\bigcap_{j\in\w}\bigcup_{i=1}^kh_i(U_j)\big\}=\bigcap_{j\in\w}\big\{(h_i)_{i=1}^k\in\HH(X)^{k}:F_j\subseteq\bigcup_{i=0}^k h_i(U_j)\big\}=\bigcap_{j\in\w}H^k_{F_j,U_j}
$$
is of type $G_\delta$ in $\HH(X)^{k}$ as each set $H^k_{F_j,U_j}$ is open in $\HH(X)^{k}$ by Lemma~\ref{l8.4}.
\end{proof}

While the preceding two lemmas hold for any compact metrizable space $X$, the following three lemmas essentially depend on the properties of the Hilbert cube $X=\II^\w$.

\begin{lemma}\label{dense-g-delta} For any zero-dimensional $Z$-set $F\subseteq X$ and any dense open subset $G$ of $X$ the set $H_{F,G}^{-1}=\{h\in\HH(X):h(F)\subseteq G\}$ is dense in $\HH(X)$.
\end{lemma}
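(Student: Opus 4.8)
The plan is to show that the set $H_{F,G}^{-1}=\{h\in\HH(X):h(F)\subset G\}$ meets every basic neighborhood $B(h_0,\U)$ in $\HH(X)$; by translation we may take $h_0=\id$ and must find $h\in\HH(X)$ with $(h,\id)\prec\U$ and $h(F)\subset G$. First I would pass to a refinement: choose a finite open cover $\V$ of $X$ that star-refines $\U$ (possible by compactness), so that it suffices to produce $h$ which is $\V$-close to the identity and pushes $F$ off the closed nowhere dense set $X\setminus G$. The point of working with a refinement is that small adjustments can be composed without leaving $B(\id,\U)$.

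Next I would apply Lemma~\ref{l2.4} to the zero-dimensional $Z$-set $F$ and the cover $\V$: this yields a finite family $B_1,\dots,B_n$ of open sets with pairwise disjoint closures, covering $F$, each $\bar B_i$ homeomorphic to $\II^\w$, each $\bar B_i$ contained in some member of $\V$, and each $\partial B_i$ a $Z$-set in $\bar B_i$. I would then construct $h$ supported on the disjoint union $\bigcup_{i=1}^n B_i$, working independently inside each copy $\bar B_i$ of the Hilbert cube. Inside $\bar B_i$ the task is: given the Cantor-like zero-dimensional compact set $F\cap\bar B_i$ (a $Z$-set in $\bar B_i$, since $F$ is a $Z$-set in $X$ and $\bar B_i$ is a sub-Hilbert-cube meeting $F$ nicely — this needs a short verification using that $\partial B_i$ is a $Z$-set in $\bar B_i$), find a homeomorphism of $\bar B_i$ fixing $\partial B_i$, moving $F\cap\bar B_i$ into the open dense set $G\cap B_i$, and remaining as close to the identity as desired. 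Since $\partial B_i$ is a $Z$-set in $\bar B_i$, such a homeomorphism extends by the identity across $\partial B_i$ to all of $X$; doing this on each $\bar B_i$ and gluing with the identity elsewhere gives the required $h\in\HH(X)$, and the $\V$-closeness is guaranteed because $h$ moves no point outside the $\V$-small sets $\bar B_i$.

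The core local step — moving a zero-dimensional compact $Z$-set $F\cap\bar B_i$ into a prescribed dense open subset of $\bar B_i$ by a small homeomorphism fixing the boundary — is where the Hilbert-cube geometry is essential, and I expect it to be the main obstacle. I would handle it with Theorem~\ref{Z-approx} (approximation by $Z$-embeddings) together with Theorem~\ref{unknot} ($Z$-set unknotting): approximate the inclusion $F\cap\bar B_i\hookrightarrow\bar B_i$ by a $Z$-embedding $g$ whose image lies in the dense open set $G\cap B_i$ (possible since $Z$-embeddings with image in any fixed dense open set are still dense in $C(F\cap\bar B_i,\bar B_i)$, as one can compose with a small push off the complement), making $g$ so close to the inclusion that $g$ together with the inclusion map $F\cap\bar B_i\to\bar B_i$ are $\V$-near; then the two $Z$-sets $F\cap\bar B_i$ and $g(F\cap\bar B_i)$ in $\bar B_i$ are homeomorphic via a $\V$-small homeomorphism between them, which by Theorem~\ref{unknot} extends to a homeomorphism $H_i$ of $\bar B_i$ that one can further arrange (by the controlled version of unknotting, or by another star-refinement absorption) to be $\U$-small and to fix $\partial B_i$ pointwise. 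Composing, restricting, and patching as above completes the proof.
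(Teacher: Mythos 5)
Your proposal is correct and follows essentially the paper's own route: reduce via Lemma~\ref{l2.4} to a finite disjoint family of small sub-Hilbert-cubes $\bar B_i$, inside each produce a $Z$-set copy of $F\cap\bar B_i$ contained in $G\cap B_i$, apply $Z$-set unknotting relative to $\partial B_i$, and glue. Two minor remarks: the controlled-smallness you request of each $H_i$ is superfluous, since $\bar B_i$ already lies in a single member of $\U$ so \emph{any} homeomorphism of $\bar B_i$ fixing $\partial B_i$ extends to one $\U$-near the identity; and your parenthetical justification that $Z$-embeddings into $G$ are dense (``compose with a small push off the complement'') is circular as phrased, since pushing a zero-dimensional $Z$-set off a closed nowhere dense set is exactly what the lemma asserts --- the paper instead locates a homeomorphic copy $K_i$ of $F\cap\bar B_i$ directly inside $B_i\cap G\setminus A_i$ for a dense $\sigma Z$-set $A_i\subset\bar B_i$, a set which is non-meager, hence uncountable, hence contains a Cantor cube, hence a copy of the zero-dimensional compactum $F\cap\bar B_i$ that is automatically a $Z$-set in $\bar B_i$.
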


\begin{proof} Given a homeomorphism $h_0\in \HH(X)$ and an open cover $\U$ of the Hilbert cube $X=\II^\w$, we need to find a homeomorphism $h$ of $\II^\w$ such that $h(F)\subseteq G$ and $(h,h_0)\prec\U$.

By Lemma~\ref{l2.4}, there is a finite cover $B_1,\dots,B_n$ of the zero-dimensional $Z$-set $h_0(F)$ by open subsets of $X$ such that
\begin{itemize}
\item for every $i\le n$ the closure $\bar B_i$ of $B_i$ lies in some set $U\in\U$,
$\bar B_i$ is homeomorphic to the Hilbert cube and $\partial B_i$ is a $Z$-set in $\bar B_i$;
\item for any $1\le i\ne j\le n$ the closures $\bar B_i$ and $\bar B_j$ are disjoint.
\end{itemize}

For every $i\le n$ we shall construct a homeomorphism $f_i:\bar B_i\to\bar B_i$ such that $f_i|\partial B_i=\id$ and $f_i\big(B_i\cap h_0(F)\big)\subseteq G$. Using the Approximation Theorem~\ref{Z-approx}, in the function space $C(\II^\w,\bar B_i)$ choose a countable dense subset $\{g_j\}_{j\in\w}$ consisting of $Z$-embeddings. Then $A_i=\bigcup_{j\in\w}g_j(\II^\w)$ is a $\sigma Z$-set in $\bar B_i$ and each closed subset $C\subseteq X\setminus A_i$ of $\bar B_i$ is a $Z$-set in $\bar B_i$. Taking into account that $G$ is an open dense subset of $X$, we conclude that the set $B_i\cap G\setminus A_i$ is non-meager in $X$. Consequently, this set is uncountable, which allows us to find a topological copy of the Cantor cube in $B_i\cap U\setminus A_i$. Since the Cantor cube contains a topological copy of each zero-dimensional compact metrizable space, we can find a compact subset $K_i\subseteq B_i\cap U\setminus A_i$, homeomorphic to the compact zero-dimensional set $\bar B_i\cap h_0(F)=B_i\cap h_0(F)$. It follows from $K_i\cap A_i=\emptyset$ that $K_i$ is a $Z$-set in $\bar B_i$. Using the fact that $h_0(F)$ is a $Z$-set in $\II^\w$ and $\partial B_i$ is a $Z$-set in $\bar B_i$, we can show that $h_0(F)\cap B_i$ is a $Z$-set in $\bar B_i$. By the $Z$-set Unknotting Theorem, there is a homeomorphism $f_i:\bar B_i\to\bar B_i$ such that $f_i|\partial B_i=\id$ and $f_i(h_0(F)\cap B_i)=K_i\subseteq U$. The homeomorphisms $f_i$, $1\le i\le n$, compose a homeomorphism $f:X\to X$ such that $f|\bar B_i=f_i$, $i\le n$ and $f|X\setminus \bigcup_{i=1}^n B_i=\id$. The homeomorphism $f$ is $\U$-near to the identity homeomorphism of $X$ and $f(h_0(F))\subseteq U$. Then the homeomorphism $h=f\circ h_0:X\to X$ is $\U$-near to $h_0$ and $h(F)\subseteq U$.
\end{proof}

\begin{lemma}\label{l8.6} For every dense $G_\delta$-set $G$ in $\II^\w$ and every $\sigma Z$-set $F\subseteq \II^\w$ of finite dimension $k=\dim(F)$ the set $$H^{k+1}_{F,G}=\big\{(h_i)_{i=1}^{k+1}\in\HH(X)^{k+1}: F\subseteq\textstyle{\bigcup\limits_{i=1}^{k+1}} h_i(G)\big\}$$is dense $G_\delta$ in the space $\HH(X)^{k+1}$.
\end{lemma}

\begin{proof} By Lemma~\ref{l8.5}, the set $H^{k+1}_{F,G}$ is of type $G_\delta$ in $\HH(X)^{k+1}$. So, it remains to prove that this set is dense in $\HH(X)^{k+1}$. This will be done by induction on $k\in\IN$.

First we check the lemma for $k=0$. Fix a $\sigma Z$-subset $F$ in $X$ of dimension $\dim(F)=0$ and consider the subset
$H^1_{F,G}=\{h\in\HH(X):F\subseteq h(G)\}$ of $\HH(X)$.
Write the $\sigma Z$-set $F$ as the union $F=\bigcup_{j\in\w}F_j$ of an increasing sequence $(F_j)_{j\in\w}$ of $Z$-sets in $X$ and write the dense $G_\delta$-set $G$ as the intersection $G=\bigcap_{j\in\w}U_j$ of a decreasing sequence $(U_j)_{j\in\w}$ of open dense subsets of $X$. Observe that
$$H^1_{F,G}=\bigcap_{j\in\w}H^1_{F_j,U_j}.$$
By Lemma~\ref{dense-g-delta}, for every $j\in\w$ the set
$$(H^1_{F_j,U_j})^{-1}=\{h\in\HH(X):F_j\subseteq h(U_j)\}^{-1}=\{h\in\HH(X):h(F_j)\subseteq U_j\}$$ is dense in $\HH(X)$ and so is its inverse $H^1_{F_j,U_j}$. By Lemma~\ref{l8.4}, the set $H^1_{F_j,U_j}$ is open in $\HH(X)$.  Then the set $H^1_{F,G}=\bigcap_{i\in\w}H^1_{F_j,U_j}$ is a dense $G_\delta$ in $\HH(X)$, being a countable intersection of open dense sets in the Polish space $\HH(X)$.
\smallskip

Now assume that the lemma has been proven for some $k\in\w$. Given any $\sigma Z$-set $F\subseteq\II^\w$ of dimension $\dim(F)=k+1$, we need to prove that the set $H^{k+2}_{F,G}$ is dense in $\HH(X)^{k+2}$. Fix any non-empty open set $\U\subseteq\HH(X)^{k+2}=\HH(X)^{k+1}\times\HH(X)$. We can assume that $\U$ is of the form $\U=\V\times\W$ for some open sets $\V\subseteq\HH(X)^{k+1}$ and $\W\subseteq\HH(X)$.

The space $F$ has (inductive) dimension $k+1$ and hence $F$ has a countable base $\mathcal B=\{U_j:j\in\w\}$ of the topology such that the boundary $\partial U_j$ of each set $U_j\in\mathcal B$ in the space $F$ has dimension $\dim(\partial U_j)\le k$. By the inductive assumption, for every $j\in\w$ the set
$$H^{k+1}_{\partial U_j,G}=\big\{\{(h_i)_{i=1}^{k+1}\in\HH(X)^{k+1}:\partial U_j\subseteq\textstyle{\bigcup\limits_{i=1}^{k+1}}h_i(G)\big\}$$is dense $G_\delta$ in $\HH(X)^{k+1}$. Then the intersection $\bigcap_{j\in\w}H^{k+1}_{\partial U_j,G}$ is a dense $G_\delta$-set in $\HH(X)^{k+1}$ as well. So, we can choose a sequence of homeomorphisms $(h_i)_{i=1}^{k+1}\in \V\cap\bigcap_{j\in\w}H^{k+1}_{\partial U_j,G}$. For these homeomorphisms, we get $\bigcup_{j\in\w}\partial U_j\subseteq\bigcup_{i=1}^{k+1} h_i(G)$.

Now consider the $\sigma Z$-set $F'=F\setminus\bigcup_{i=1}^{k+1} h_i(G)$. Since $\{U_j\}_{j\in\w}$ is the base of the topology of the space $F$ and the intersection $F'\cap \bigcup_{j\in\w}\partial U_j$ is empty, the set $F'$ has dimension zero. By the inductive assumption (for $k=0$) the set $H^1_{F',G}$ is dense $G_\delta$ in $\HH(X)$, which allows us to find a  homeomorphism $h_{k+2}\in\W\cap H^1_{F',G}$. Then the sequence of homeomorphisms $(h_i)_{i=0}^{k+2}$ belongs to the set $(\V\times\W)\cap H^{k+2}_{F,G}$ witnessing that the set $H^{k+2}_{F,U}$ is dense in $\HH(X)^{k+2}$.
\end{proof}

\begin{lemma}\label{l3.5} For every dense $G_\delta$-set $G$ in $X=\II^\w$ and every $F_\sigma$-set $F\subseteq \II^\w$ of finite dimension $k=\dim(F)$ the set $$H^{k+4}_{F,G}=\big\{(h_i)_{i=1}^{k+4}\in\HH(X)^{k+4}: F\subseteq\textstyle{\bigcup\limits_{i=1}^{k+4}} h_i(G)\big\}$$is dense $G_\delta$ in the space $\HH(X)^{k+4}$.
\end{lemma}

\begin{proof} Given any non-empty open set $\U\subseteq \HH(X)^{k+4}$, we need to show that $\U\cap H^{k+4}_{F,G}\ne\emptyset$. We lose no generality assuming that $\U=\V\times\W$ for some non-empty open sets $\V\in \HH^{k+1}(X)$ and $\W\subseteq \HH^3(X)$.

By Theorem~\ref{Z-approx}, the function space $C(\II^2,X)$ contains a dense countable subset $\{f_j\}_{j\in\w}$ consisting of $Z$-embeddings. It follows that $A=\bigcup_{j\in\w}f_j(\II^2)$ is a $\sigma Z$-set of dimension $\dim(A)=2$ in $\II^\w$. By Lemma~\ref{l8.6}, the set $H^3_{A,G}$ is dense in $\HH^3(G)$. Consequently, we can find homeomorphisms $(h_{k+2},h_{k+3},h_{k+4})\in\W$ such that $A\subseteq \bigcup_{i=k+2}^{k+4}h_i(G)$. Now consider the $G_\delta$-set $G'=\bigcup_{i=k+2}^{k+4}h_i(G)$ and the finite-dimensional $F_\sigma$-set $F\setminus G'\subseteq X\setminus A$ in $X$. It follows from the choice of the set $A$ that $F\setminus G'$ is a $\sigma Z_2$-set. Since each finite-dimensional $Z_2$-set in the Hilbert cube is a $Z$-set (see \cite{Kron}), the finite-dimensional $\sigma Z_2$-set $F\setminus G'$ is a $\sigma Z$-set in $X=\II^\w$. By Lemma~\ref{l8.6}, the set $H^{k+1}_{F\setminus G',G}$ is dense in $\HH(X)^{k+1}$, which allows us to find homeomorphisms $(h_1,\dots,h_{k+1})\in\V$ such that $F\setminus G'\subseteq \bigcup_{i=1}^{k+1}h_i(G)$. Then $(h_1,\dots,h_{k+1},h_{k+2},h_{k+3},h_{k+4})\in \V\times\W=\U$ is a sequence of homeomorphisms with $F\subseteq\bigcup_{i=1}^{k+4}h_i(G)$, which means that this sequence belongs to the set $H^{k+4}_{F,G}$.
\end{proof}

\begin{proof}[Proof of Theorem~\ref{t1.4}] Let $\I\not\subseteq\M$ be any topologically invariant ideal with BP-base on the Hilbert cube $\II^\w$. Repeating the argument of the proof of Theorem~\ref{t1.1}, we can show that $\I$ contains a dense $G_\delta$-set $G$ in $\II^\w$. By Lemma~\ref{l3.5}, for any closed subset $F\subseteq \II^\w$ of finite dimension $k=\dim(F)$ there are homeomorphisms $h_1,\dots,h_{k+4}\in\HH(\II^\w)$ such that $F\subseteq \bigcup_{i=1}^{k+4}h_i(G)$. By the topological invariance and additivity of $\I$, the union $\bigcup_{i=1}^{k+4}h_i(G)$ and its subset $F$ belong to the ideal $\I$. So, $\overline{\mathcal D}_{<\w}\subseteq\I$.

If $\I$ is a $\sigma$-ideal, then $\sigma\overline{\mathcal D}_{<\w}\subseteq\I$.
\end{proof}

\section{Proof of Theorem~\ref{t1.5}}\label{s:t1.5}

The proof of Theorem~\ref{t1.5} is divided into four lemmas: \ref{cov}, \ref{non}, \ref{add}, and \ref{cof} reducing the problem of calculation of the cardinal characteristics of the ideal $\sigma\C_0$ to zero-dimensional level. The reduction will be made with help of semi-open bijection of the Baire space $\IZ^\w$ onto the Hilbert cube.

A map $f:X\to Y$ between topological spaces is called {\em semi-open} if for each non-empty open set $U\subseteq X$ the image $f(U)$ has non-empty interior in $Y$. We recall that all maps considered in this paper are continuous.
The following property of bijective semi-open maps is immediate.

\begin{lemma} If $f:X\to Y$ is a bijective semi-open map between topological spaces, then for any nowhere dense subset $A\subseteq X$ its image $f(A)$ is nowhere dense in $Y$.
\end{lemma}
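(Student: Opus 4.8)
The plan is to invoke the standard reformulation of nowhere density: a subset $N$ of a topological space $Z$ is nowhere dense in $Z$ if and only if every non-empty open subset of $Z$ contains a non-empty open subset disjoint from $N$. Accordingly, I would fix an arbitrary non-empty open set $V\subseteq Y$ and construct a non-empty open $W\subseteq V$ with $W\cap f(A)=\emptyset$.

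First, since every map in this paper is continuous, the preimage $f^{-1}(V)$ is open in $X$, and it is non-empty because $f$ is surjective. As $A$ is nowhere dense in $X$, the open set $X\setminus\overline A$ is dense, so the intersection $U=f^{-1}(V)\setminus\overline A$ is a non-empty open subset of $X$; moreover $U\cap A=\emptyset$ and $U\subseteq f^{-1}(V)$.

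Next, apply the semi-openness of $f$ to the non-empty open set $U$: its image $f(U)$ has non-empty interior in $Y$, so $W=\mathrm{int}_Y f(U)$ is a non-empty open set. From $U\subseteq f^{-1}(V)$ we get $W\subseteq f(U)\subseteq f(f^{-1}(V))\subseteq V$, and the injectivity of $f$ yields $f(U)\cap f(A)=f(U\cap A)=\emptyset$, hence $W\cap f(A)=\emptyset$. This produces the required $W$ and shows that $f(A)$ is nowhere dense in $Y$.

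There is no genuine obstacle here (the paper rightly calls the statement immediate); the only thing worth bookkeeping is the role of each hypothesis — continuity is used for the openness of $f^{-1}(V)$, surjectivity for its non-emptiness, semi-openness for the non-empty interior of $f(U)$, and injectivity for the disjointness $f(U)\cap f(A)=\emptyset$.
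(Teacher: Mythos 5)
Your argument is correct and is precisely the intended "immediate" argument: reduce nowhere density to the criterion that every non-empty open set meets a non-empty open subset avoiding the set, then push $U=f^{-1}(V)\setminus\overline A$ through $f$ using semi-openness for a non-empty interior and injectivity for disjointness from $f(A)$. The paper omits the proof entirely, calling the lemma immediate, so there is nothing to contrast; your bookkeeping of which hypothesis (continuity, surjectivity, semi-openness, injectivity) is used where is accurate.
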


A standard example of a semi-open map is the Cantor ladder map
$$c:\{0,1\}^\w\to [0,1],\;\;c:(x_i)_{i\in\w}\mapsto\sum_{i=0}^\infty \frac{x_i}{2^{i+1}}$$
of the Cantor cube $\{0,1\}^\w$ onto the closed interval $[0,1]$. This map will be used to prove:

\begin{lemma}\label{l4.2} There exists a bijective semi-open map $\varphi:\IZ^\w\to \II$ of the Baire space $\IZ^\w$ onto the closed interval $\II$.
\end{lemma}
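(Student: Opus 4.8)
The plan is to build $\varphi:\IZ^\w\to\II$ as a composition of three maps: an order-preserving bijection $\IZ^\w\to\IN^\w$, a natural map $\IN^\w\to(0,1)\cap(\text{irrationals})$ built from continued fractions, and then deal with the endpoints and rationals. Actually, the cleaner route — and the one I would pursue — is to use the Cantor ladder map $c:\{0,1\}^\w\to\II$ directly and ``fatten'' it. The Cantor map $c$ is continuous, surjective, monotone, and semi-open (it sends a basic clopen set determined by a finite prefix onto a dyadic subinterval, which has nonempty interior), but it is \emph{not} injective: it identifies the two sequences $s01111\ldots$ and $s10000\ldots$ for each finite $s$. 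The set $D\subset\{0,1\}^\w$ of points where $c$ fails to be injective is exactly the countable set of eventually-constant sequences, and $c(D)$ is the set of dyadic rationals in $\II$.

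**First I would** handle the injectivity defect combinatorially. The idea is to replace $\{0,1\}^\w$ by a space homeomorphic to it on which $c$ becomes injective after a controlled modification. Concretely: let $E$ be the set of eventually-constant $0$-$1$ sequences (countable, dense in $\{0,1\}^\w$) and note $c|_{\{0,1\}^\w\setminus E}$ is already injective. I want a bijective semi-open map, so I would take $\IZ^\w$, split off a countable dense subset $Q\subset\IZ^\w$, fix a homeomorphism $\IZ^\w\setminus Q\to\{0,1\}^\w\setminus E$ (both are zero-dimensional Polish spaces with no isolated points and no compact neighborhoods — in fact both are homeomorphic to $\IZ^\w$ itself, so such a homeomorphism exists; alternatively invoke the topological characterization of $\IZ^\w$), then biject $Q$ with $E$ by any bijection. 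Composing with $c$ gives a bijection $\varphi:\IZ^\w\to\II$. The point is that on the ``bulk'' $\IZ^\w\setminus Q$ the map agrees with $c$ restricted to $\{0,1\}^\w\setminus E$, which is semi-open onto its image, and $c(\{0,1\}^\w\setminus E)=\II\setminus(\text{dyadic rationals})$ is a dense set whose relative open sets have nonempty interior in $\II$.

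**The main obstacle** will be verifying semi-openness of the assembled $\varphi$, since $\varphi$ is cobbled together from pieces and need not be continuous. What I need: for every nonempty open $W\subseteq\IZ^\w$, the image $\varphi(W)$ has nonempty interior in $\II$. Since $\IZ^\w\setminus Q$ is dense, $W\cap(\IZ^\w\setminus Q)$ is a nonempty open subset of $\IZ^\w\setminus Q$; under the homeomorphism to $\{0,1\}^\w\setminus E$ it maps to a nonempty relatively open set, which contains a nonempty set of the form $(\text{basic clopen}\ [s])\setminus E$; applying $c$, and using that $c([s])$ is a nondegenerate dyadic subinterval while $c(E)$ is countable (hence nowhere dense), we get that $\varphi(W)\supseteq c([s])\setminus(\text{countable set})$, which is dense in the interval $c([s])$ — but ``dense in an interval'' is not the same as ``has nonempty interior''. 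To fix this I would instead run the argument the other way: arrange the bijection $Q\leftrightarrow E$ and the homeomorphism so that $\varphi$ actually \emph{contains} $c$ on a large set. The cleanest fix: don't delete anything. Define $\varphi$ on all of $\IZ^\w$ by first mapping $\IZ^\w$ homeomorphically onto $\{0,1\}^\w\setminus E'$ where $E'$ is the set of eventually-$0$ sequences (so we keep the eventually-$1$ representatives, making $c$ injective on the target), then apply $c$. Then $\varphi = c\circ\iota$ where $\iota:\IZ^\w\to\{0,1\}^\w$ is a homeomorphic embedding with image $\{0,1\}^\w\setminus E'$; $\varphi$ is injective because $c$ is injective on $\{0,1\}^\w\setminus E'$, and surjective because $c(\{0,1\}^\w\setminus E')=\II$ (every point keeps at least one representative). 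It remains to check $\{0,1\}^\w\setminus E'$ is homeomorphic to $\IZ^\w$ — it is a zero-dimensional, Polish, nowhere locally compact space with no isolated points, hence $\cong\IZ^\w$ by Alexandrov–Urysohn — and to check semi-openness: for nonempty open $W\subseteq\IZ^\w$, $\iota(W)$ is relatively open in $\{0,1\}^\w\setminus E'$, so $\iota(W)=[s]\cap(\{0,1\}^\w\setminus E')$ for a suitable union of basic clopens; then $\varphi(W)=c(\iota(W))=c([s])\setminus c(E'\cap[s])$. Now $c([s])$ is a nondegenerate interval and $c(E'\cap[s])$ is countable, but worse, it includes the \emph{left endpoint} of $c([s])$ — still countable, still the complement contains an open subinterval $(\text{left endpoint},\text{right endpoint}]$ minus countably many interior dyadics... that is again merely dense. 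I realize the honest conclusion is that a naive composition cannot be semi-open in the strong sense demanded, so the real content of the lemma is a more careful bare-hands construction — the approach I ultimately commit to is: construct $\varphi$ directly by a tree argument, assigning to each finite sequence $\sigma\in\IZ^{<\w}$ a nondegenerate closed interval $I_\sigma\subseteq\II$ so that $\{I_{\sigma\frown n}:n\in\IZ\}$ are pairwise internally-disjoint subintervals of $I_\sigma$ covering it (using a bijection $\IZ\to\IN$ and a partition of $I_\sigma$ into countably many subintervals accumulating only at the left endpoint), with diameters shrinking to $0$, set $\varphi(x)=\bigcap_n I_{x\restriction n}$, and then handle the countably many boundary-point collisions by a bijective reshuffling on a countable dense set exactly as above — where now semi-openness is immediate because $\varphi$ of a basic clopen is (a cofinite-in-the-relevant-sense piece of) an honest interval $I_\sigma$, and one checks the countably many exceptional points only affect nowhere-dense sets. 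This is routine but fiddly; the one genuine subtlety is booking the endpoint identifications so the final map is a bijection, and that is handled by the Cantor–Bendixson / ``any two countable dense-in-themselves... '' style matching of the exceptional sets on both sides.
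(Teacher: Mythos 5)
Your overall strategy is the same as the paper's: restrict the Cantor ladder map $c$ to a set $B\subseteq\{0,1\}^\w$ on which $c$ is bijective, observe that $B$ has countable dense complement and hence is homeomorphic to $\IZ^\w$ by Alexandrov--Urysohn, and pull back $c|_B$ through that homeomorphism. But you abandon this plan because of a computational error, and the replacement tree construction you settle on at the end is never actually carried out and carries exactly the same unresolved issue.

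The error is the equation $\varphi(W)=c\bigl(\iota(W)\bigr)=c([s])\setminus c(E'\cap[s])$. Since $c$ is two-to-one over dyadic rationals, $c([s]\setminus E')$ is in general strictly larger than $c([s])\setminus c(E'\cap[s])$: a dyadic $y$ in the interior $(a,b)$ of $c([s])$ has \emph{both} of its binary representations inside $[s]$ (because $c^{-1}\bigl((a,b)\bigr)\subseteq[s]$), so removing the eventually-$0$ one from $[s]$ does not remove $y$ from the image --- the eventually-$1$ preimage is still there. The same holds for non-dyadic $y$, whose unique preimage lies in $[s]\cap B$. Hence for any $B$ with $c|_B$ bijective one has $c([s]\cap B)\supseteq(a,b)$, an honest open interval, and semi-openness of $c|_B$ follows directly. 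You diagnose the failure mode (``merely dense, not open'') correctly in the abstract, but misapply it to your own construction: no interior dyadic is ever lost. This is precisely the ``easy to see'' step in the paper, and it is what makes the clean composition $\varphi=c\circ h$ work; your first two attempts were already essentially correct.

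Two smaller remarks. First, in your $E'$-variant (delete eventually-$0$ sequences) you also delete the unique preimage of $0$, so $c|_{\{0,1\}^\w\setminus E'}$ is not surjective; the paper's formulation --- take \emph{any} $B$ with $c|_B$ a bijection onto $\II$ --- sidesteps this. Second, the concluding tree construction is not a proof: you assert semi-openness is ``immediate'' because $\varphi$ of a clopen is ``a cofinite-in-the-relevant-sense piece'' of an interval, but this is the same dense-versus-open gap you had just flagged, and it requires exactly the both-preimages-survive observation above. As written, the proposal has a genuine gap at the semi-openness verification even though the construction it started from was sound.
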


\begin{proof} Consider the Cantor ladder map $c:\{0,1\}^\w\to[0,1]$.
It is well-known that for each point $y$ of the set $Q_2=\{\frac{m}{2^k}:0<m<2^k,\;k,m\in\w\}\subseteq [0,1]$ the preimage $c^{-1}(y)$ has cardinality $|c^{-1}(y)|=2$ and for every $y\in[0,1]\setminus Q_2$ the preimage $c^{-1}(y)$ is a singleton.
Take any subset $B\subseteq \{0,1\}^\w$ such that the restriction $c|B:B\to[0,1]$ is bijective. It follows that the set $B$ is dense and has countable dense complement in $\{0,1\}^\w$. Consequently, $B$ is zero-dimensional Polish nowhere locally compact space, which is homeomorphic to the Baire space $\IZ^\w$ according to the Aleksandrov-Urysohn Theorem~\cite[7.7]{Ke}. It is easy to see that the restriction $c|B:B\to [0,1]$ is semi-open. Then for any homeomorphism $h:\IZ^\w\to B$ the map $\varphi=c\circ h:\IZ^\w\to[0,1]$ is a bijective semi-open map of the Baire space $\IZ^\w$ onto the interval $[0,1]$.
\end{proof}

We shall consider the Baire space $\IZ^\w$ as a topological group endowed with the operation of addition of functions. In this group consider the closed nowhere dense subset $\IZ_0^\w$ where $\IZ_0=\IZ\setminus\{0\}$.

\begin{lemma}\label{l4.3} There is a bijective semi-open map $\Phi:\IZ^\w\to\II^\w$ such that for every $f\in\IZ^\w$ the set $\Phi(f+\IZ_0^\w)$ belongs to the $\sigma$-ideal $\sigma\C_0$ generated by zero-dimensional $Z$-sets in $\II^\w$.
\end{lemma}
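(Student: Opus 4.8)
The plan is to build $\Phi$ as an infinite ``coordinatewise'' product of copies of the map $\varphi$ from Lemma~\ref{l4.2}. Concretely, I would first fix a bijective semi-open map $\varphi\colon\IZ^\w\to\II$ and a bijection $\IZ^\w\to(\IZ^\w)^\w$ respecting the group structure (this is just a reindexing of coordinates, an isomorphism of topological groups). Composing, one obtains a bijective semi-open map $\Phi_0=\varphi^\w\colon(\IZ^\w)^\w\to\II^\w$ whose composition with the reindexing gives a bijective semi-open map $\Phi\colon\IZ^\w\to\II^\w$. Here I use that a countable product of bijective semi-open maps is bijective semi-open: bijectivity is clear, and for semi-openness it suffices to check images of basic open boxes (which restrict only finitely many coordinates), and the image of such a box contains a product of the images of finitely many $\varphi$ of open sets (each with nonempty interior) times full copies of $\II$, hence has nonempty interior.

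The key point is then to analyze the sets $\Phi(f+\IZ_0^\w)$. Under the reindexing identifying $\IZ^\w$ with $(\IZ^\w)^\w$, an element $f$ becomes a sequence $(f_n)_{n\in\w}$ with $f_n\in\IZ^\w$, and the coset $f+\IZ_0^\w$ becomes a subset of $(\IZ^\w)^\w$ on which, in each coordinate $n$, the value ranges over $f_n+$ (something); the crucial feature is that in \emph{every} coordinate the $n$-th entry is forbidden from taking at least one value, i.e.\ it is confined to $f_n+(\IZ\setminus\{0\})$ type sets coordinatewise, so that $\Phi(f+\IZ_0^\w)$ is contained in a product $\prod_{n\in\w}A_n$ where each $A_n=\varphi(g_n+\IZ_0^\w)\subseteq\II$ is a proper subset of $\II$ (it omits the $\varphi$-image of the point whose $\IZ^\w$-th coordinate slot is disallowed). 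Since $\varphi$ is a bijection, $A_n\subsetneq\II$ is a proper, dense, countable-complement subset. One must be slightly careful: the direct analogue of $\prod A_n$ with $A_n$ closed proper would let us invoke Lemma~\ref{product-Z-set}, but here $A_n$ need not be closed. The remedy is to observe $\Phi(f+\IZ_0^\w)$ is an $F_\sigma$-set which is zero-dimensional (being a continuous bijective image under a coordinatewise map of a zero-dimensional space, or directly: it embeds in a product of countable metrizable spaces), and to cover it by countably many pieces each of which is closed and omits, in every coordinate, a nonempty closed set; each such piece sits inside a product $\prod B_n$ with $B_n\subsetneq\II$ closed, hence is a $Z$-set in $\II^\w$ by Lemma~\ref{product-Z-set}, and being also zero-dimensional it is a zero-dimensional $Z$-set.

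Thus $\Phi(f+\IZ_0^\w)$ is a countable union of zero-dimensional $Z$-sets, i.e.\ it lies in $\sigma\C_0$, which is exactly the assertion. I expect the main obstacle to be the bookkeeping in the second step: getting a clean group isomorphism $\IZ^\w\cong(\IZ^\w)^\w$ under which the single ``forbidden-zero'' condition defining $\IZ_0^\w$ translates into a condition that is forbidden-in-every-block, so that after applying $\varphi$ in each block one genuinely lands in a product of \emph{proper} subsets of $\II$. A convenient way to arrange this is to choose the bijection $\w\to\w\times\w$, $k\mapsto(n(k),m(k))$, so that $\IZ^\w\ni f\mapsto (f_n)_{n\in\w}$ with $f_n=(f(k))_{m(k)=n}$; then $f+\IZ_0^\w$ consists of all $g$ with $g(k)\ne f(k)$ for every $k$, and in block $n$ this forbids at least one value of the $\IZ^\w$-coordinate, giving $A_n\subsetneq\II$ as needed. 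With this choice the argument goes through, and the closedness issue is handled by the $F_\sigma$-decomposition as above; everything else (semi-openness of $\varphi^\w$, zero-dimensionality, invoking Lemma~\ref{product-Z-set}) is routine.
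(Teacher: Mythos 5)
Your overall strategy --- build $\Phi$ as the countable power $\varphi^\w$ precomposed with a coordinate reindexing $\IZ^\w\cong(\IZ^\w)^\w$ that carries $\IZ_0^\w$ to $(\IZ_0^\w)^\w$, then analyze the factors coordinatewise --- is exactly the paper's approach, and the first two paragraphs are fine. But you go off the rails at the key topological step, and the detour you take is both unnecessary and not actually worked out.

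You write that $A_n=\varphi(f_n+\IZ_0^\w)$ ``is a proper, dense, countable-complement subset'' of $\II$, and from this you conclude that $A_n$ is not closed and that invoking Lemma~\ref{product-Z-set} requires an $F_\sigma$-decomposition into pieces confined to products of \emph{closed} proper sets. This premise is wrong, and it is precisely the point where the simple argument would have worked. The set $f_n+\IZ_0^\w$ is a coset of the \emph{closed nowhere dense} subgroup $\IZ_0^\w\subset\IZ^\w$ (its complement, the set of sequences with at least one coordinate hitting the forbidden value, is open and dense). Since $\varphi$ is bijective and semi-open, Lemma~4.1 tells you that $A_n=\varphi(f_n+\IZ_0^\w)$ is \emph{nowhere dense} in $\II$ --- not dense at all, and its complement has cardinality $\mathfrak c$. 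Therefore the closure $K_n=\overline{A_n}$ is still a proper closed (indeed nowhere dense, hence zero-dimensional) subset of $\II$, Lemma~\ref{product-Z-set} applies directly to $\prod_n K_n$, and $\Phi(f+\IZ_0^\w)\subseteq\prod_n K_n$ lies in $\sigma\C_0$ with no decomposition needed. This is what the paper does.

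Beyond being unnecessary, your proposed ``remedy'' does not obviously close the gap you (erroneously) identified: if the sets $A_n$ really were dense in $\II$, there is no clear way to cover $\prod_n A_n$ (or even $\Phi(f+\IZ_0^\w)$) by countably many products $\prod_n B_n$ with each $B_n\subsetneq\II$ closed, since any closed set containing a dense $A_n$ is all of $\II$ and no single countable exhaustion of each $A_n$ by closed sets yields a countable family of full products covering the whole product. So as written the proof has a genuine gap; the fix is to replace the incorrect ``dense, countable-complement'' claim by the correct ``nowhere dense'' observation (which is the whole reason $\varphi$ was built to be bijective and semi-open in Lemma~4.2) and then simply take closures.
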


\begin{proof} Take the bijective semi-open map $\varphi:\IZ^\w\to\II$ from Lemma~\ref{l4.2} and consider its countable power $$\varphi^\w:(\IZ^\w)^\w\to\II^\w,\;\;\varphi^\w:(x_i)_{i\in\w}\mapsto (\varphi(x_i))_{i\in\w}.$$ For each function $f\in(\IZ^\w)^\w$ the set $f+(\IZ_0^\w)^\w$ can be written as the countable product $\prod_{n\in\w}(f_n+\IZ_0^\w)$ for suitable functions $f_n\in\IZ^\w$, $n\in\w$. Then $\varphi^\w(f+(\IZ_0^\w)^\w)=\prod_{n\in\w}\varphi(f_n+\IZ_0^\w)$. Observe that for every $n\in\w$ the set $f_n+\IZ^\w_0$ is nowhere dense in $\IZ^\w$. Since the map $\varphi$ is bijective and semi-open, the image $\varphi(f_n+\IZ^\w_0)$ is nowhere dense in the interval $\II$ and so is its closure $K_n$ in $\II$. By Lemma~\ref{product-Z-set}, the product $K=\prod_{n\in\w}K_n$ is a zero-dimensional $Z$-set in $\II^\w$. Consequently, the set $\varphi^\w(f+(\IZ_0^\w)^\w)\subseteq K$ belongs to the ideal $\sigma\C_0$.
Then for any coordinate permuting homeomorphism $h:\IZ^\w\to(\IZ^\w)^\w$ the map $\Phi=\varphi^\w\circ h:\IZ^\w\to\II^\w$ has the required property: $\Phi(f+\IZ^\w_0)\in\sigma\C_0$ for every $f\in\IZ^\w$.
\end{proof}

\begin{lemma}\label{cov}
$\cov(\sigma\C_0)=\cov(\M).$
\end{lemma}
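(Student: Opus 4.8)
The plan is to prove the equality $\cov(\sigma\C_0)=\cov(\M)$ by establishing the two inequalities separately, transferring the computation to the Baire space $\IZ^\w$ via the bijective semi-open map $\Phi:\IZ^\w\to\II^\w$ constructed in Lemma~\ref{l4.3}. The key observation linking the two ideals is that $\Phi$ translates covers of $\II^\w$ by sets in $\sigma\C_0$ into covers of $\IZ^\w$ by translates of the closed nowhere dense subgroup-like set $\IZ_0^\w$, and conversely, so that the $\cov$ of $\sigma\C_0$ is sandwiched between $\cov(\M)$ computed on the two sides.

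For the inequality $\cov(\sigma\C_0)\le\cov(\M)$, I would start from a family $\A$ of meager subsets of $\IZ^\w$ with $|\A|=\cov(\M)$ and $\bigcup\A=\IZ^\w$ (using that $\cov(\M)$ is the same for all perfect Polish spaces, in particular for $\IZ^\w$ and $\II^\w$). Each meager set is contained in an $F_\sigma$ meager set, so without loss of generality the members of $\A$ are $F_\sigma$, hence $\bigcup\A$ is covered by countably many closed nowhere dense sets for each member; shrinking, one reduces to a cover by closed nowhere dense sets $F_\alpha$, $\alpha<\cov(\M)$. Now for each such $F_\alpha$, pick a point $f_\alpha$ in its complement; in fact the cleaner route is: since $\IZ^\w=\bigcup_{f\in\IZ^\w}(f+\IZ_0^\w)$ would need $|\IZ^\w|=\mathfrak c$ translates, I instead use that the single closed nowhere dense set $\IZ_0^\w$ together with $\cov(\M)$-many of its translates covers $\IZ^\w$ — this is because the translates of a fixed non-empty closed nowhere dense set in a Polish group form a basis-like family whose covering number equals $\cov(\M)$ (each translate $f+\IZ_0^\w$ is closed nowhere dense, and every point $g\in\IZ^\w$ lies in $g-h+\IZ_0^\w$ for any $h\in\IZ_0^\w$, so arbitrary points are hit; a careful count shows $\cov(\M)$ translates suffice). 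Applying $\Phi$ and using $\Phi(f+\IZ_0^\w)\in\sigma\C_0$ from Lemma~\ref{l4.3}, we obtain a cover of $\II^\w$ by $\cov(\M)$ many sets from $\sigma\C_0$, giving $\cov(\sigma\C_0)\le\cov(\M)$.

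For the reverse inequality $\cov(\M)\le\cov(\sigma\C_0)$, I would take a family $\{A_\alpha:\alpha<\cov(\sigma\C_0)\}\subseteq\sigma\C_0$ covering $\II^\w$. Each $A_\alpha$ is contained in a countable union of zero-dimensional $Z$-sets, and by Lemma~\ref{product-Z-set} (or directly) every zero-dimensional $Z$-set in $\II^\w$ is nowhere dense, hence meager; thus each $A_\alpha$ is meager in $\II^\w$. Therefore $\{A_\alpha\}$ is a cover of $\II^\w$ by meager sets, whence $\cov(\M)\le\cov(\sigma\C_0)$. Combining the two inequalities yields the claimed equality.

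The main obstacle I anticipate is the first inequality, specifically the bookkeeping showing that one can cover $\IZ^\w$ by only $\cov(\M)$ translates of $\IZ_0^\w$ (rather than $\mathfrak c$ many). The honest fix is to note that $\cov(\M)$ equals the least cardinality of a cover of $\IZ^\w$ by nowhere dense sets, and that the closed nowhere dense sets of the special form $f+\IZ_0^\w$ are cofinal (in the sense of covering) among nowhere dense sets: every nowhere dense set $N\subseteq\IZ^\w$ is contained in a closed nowhere dense set, and any closed nowhere dense set is covered by $\aleph_0$ many translates of $\IZ_0^\w$ — indeed $\IZ_0^\w=\{x:\forall n\ x(n)\neq 0\}$ and a basic clopen cylinder $[s]$ with $s\in\IZ^{<\w}$ is itself a translate of a clopen set containing some translate of $\IZ_0^\w$; working through which translates are needed, one sees $\cov(\M)$ translates of $\IZ_0^\w$ suffice to cover all of $\IZ^\w$. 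This reduces everything to the standard fact that $\cov(\M)$ is invariant among perfect Polish spaces, which may be quoted from \cite{BJ}.
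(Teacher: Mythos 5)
Your overall structure is the same as the paper's: both directions, with the easy one $\cov(\sigma\C_0)\ge\cov(\M)$ coming from $\sigma\C_0\subseteq\M$, and the hard one $\cov(\sigma\C_0)\le\cov(\M)$ coming from pushing a cover of $\IZ^\w$ by translates $f+\IZ_0^\w$ through the map $\Phi$ of Lemma~\ref{l4.3}. The problem is in how you justify that $\cov(\M)$ many translates of $\IZ_0^\w$ cover $\IZ^\w$; here there is a real gap.

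You reduce a cover by meager sets to a cover by closed nowhere dense sets (fine), and then assert that every closed nowhere dense $N\subseteq\IZ^\w$ is covered by countably many translates $f+\IZ_0^\w$. This claim is not proved, and it is not a routine observation: a translate $f+\IZ_0^\w=\{y:\forall n\; y(n)\ne f(n)\}$ is a very particular kind of closed nowhere dense set, and the relation between such sets and general closed nowhere dense sets is precisely the non-trivial content of the Bartoszy\'nski combinatorial characterization $\cov(\M)=\min\{|\F|:\F\subseteq\IZ^\w,\ \F+\IZ_0^\w=\IZ^\w\}$. Your proposed ``honest fix'' appeals to the invariance of $\cov(\M)$ among perfect Polish spaces, but that is the wrong fact: invariance tells you the covering number by meager sets is the same for $\IZ^\w$ and $\II^\w$, not that the specific closed nowhere dense sets $f+\IZ_0^\w$ are cofinal for covering. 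Trying to derive the latter from first principles (e.g.\ by a diagonal/slalom argument on trees) would amount to reproving a chunk of \cite[\S2.4]{BJ}, and the hand-waving in the proposal does not accomplish that.

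The paper avoids this entirely by directly citing \cite[Theorem~2.4.1]{BJ} for the equality $\cov(\M)=\min\{|\F|:\F\subseteq\IZ^\w,\ \F+\IZ_0^\w=\IZ^\w\}$, and then applying Lemma~\ref{l4.3} exactly as you intend. So the right fix is not more bookkeeping about nowhere dense sets, but to replace your cofinality claim with a citation of Bartoszy\'nski's characterization, after which the rest of your argument is correct and coincides with the paper's.
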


\begin{proof}
The inequality  $\cov(\sigma\C_0)\ge \cov(\M)$ is obvious, because $\sigma\C_0\subseteq \M.$
The proof of the inequality  $\cov(\sigma\C_0)\le \cov(\M)$ uses the equality
$$\cov(\M)=\min\{|\F|:\;\F\subseteq\IZ^\w\mbox{ and }\F+\IZ_0^\omega=\IZ^\w\}$$
proved in Theorem 2.4.1 \cite{BJ} (see also \cite{Miller}). According to this equality, there is a subset $\F\subseteq \IZ^{\w}$ of cardinality
$|\F|= \cov(\M)$ such that $\IZ_0^{\w}+\F=\IZ^{\w}.$

By Lemma~\ref{l4.3}, there is a bijective map $\Phi:\IZ^{\w}\rightarrow \II^\w$ such that for every $f\in\IZ^\w$ the set $\Phi(f+\IZ^\w_0)$ belongs to the ideal $\sigma\C_0$.
Since $$\II^\w=\Phi(\IZ^{\w})=\bigcup_{f\in\F}\Phi(f+\IZ^\w_0),$$
the family $\{\Phi(f+\IZ^\w_0)\}_{f\in\F}\subseteq\sigma\C_0$ is a cover of $\II^\w$, witnessing that $\cov(\sigma\C_0)\le|\F|=\cov(\M)$.
\end{proof}

\begin{lemma}\label{non}
  $\non(\sigma\C_0)=\non(\M).$
\end{lemma}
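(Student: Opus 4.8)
The plan is to follow the template of Lemma~\ref{cov}, replacing the covering characterization of $\cov(\M)$ by the ``infinitely-often matching'' characterization of $\non(\M)$ from \cite{BJ}. The inequality $\non(\sigma\C_0)\le\non(\M)$ is immediate from the inclusion $\sigma\C_0\subseteq\M$: every non-meager subset of $\II^\w$ lies outside $\sigma\C_0$, so a smallest set outside $\sigma\C_0$ is no larger than a smallest non-meager set. Thus the whole content is the reverse inequality $\non(\sigma\C_0)\ge\non(\M)$, and for this it suffices to prove that every $A\subseteq\II^\w$ with $|A|<\non(\M)$ belongs to $\sigma\C_0$.

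So take the bijective semi-open map $\Phi:\IZ^\w\to\II^\w$ of Lemma~\ref{l4.3} and put $B=\Phi^{-1}(A)$, so that $|B|=|A|<\non(\M)$. By the combinatorial form of $\non(\M)$ in \cite{BJ} (a smallest non-meager subset of $\IZ^\w$ has the same cardinality as a smallest family $F\subseteq\IZ^\w$ such that every $g\in\IZ^\w$ agrees with some $f\in F$ in infinitely many coordinates), the family $B$ is too small to have this property, so there is $g\in\IZ^\w$ that is eventually different from every $b\in B$. Hence $B\subseteq\bigcup_{N\in\w}E_{g,N}$, where $E_{g,N}=\{x\in\IZ^\w:x(n)\ne g(n)\text{ for all }n\ge N\}$.

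Each $E_{g,N}$ leaves only the finitely many coordinates $<N$ unconstrained, which lets us absorb it into countably many translates of $\IZ_0^\w$: enumerating $\IZ^N=\{a_j:j\in\w\}$ and setting $f_{N,j}(n)=a_j(n)+1$ for $n<N$ and $f_{N,j}(n)=g(n)$ for $n\ge N$, one checks at once that $E_{g,N}\subseteq\bigcup_{j\in\w}\big(f_{N,j}+\IZ_0^\w\big)$ (if $x\in E_{g,N}$ and $x$ restricted to $[0,N)$ equals $a_j$, then $x(n)\ne f_{N,j}(n)$ for $n<N$ since $f_{N,j}(n)=a_j(n)+1$, and for $n\ge N$ since $f_{N,j}(n)=g(n)$). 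Therefore $B\subseteq\bigcup_{N,j\in\w}(f_{N,j}+\IZ_0^\w)$, and by Lemma~\ref{l4.3} together with the $\sigma$-additivity of $\sigma\C_0$ we obtain $A=\Phi(B)\subseteq\bigcup_{N,j\in\w}\Phi(f_{N,j}+\IZ_0^\w)\in\sigma\C_0$, as desired.

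The only non-routine point is the passage from the cardinality bound ``$|B|<\non(\M)$'' to a concrete covering of $B$ by countably many translates of $\IZ_0^\w$; once the correct combinatorial description of $\non(\M)$ is invoked this is taken care of by the explicit formula for $f_{N,j}$, which merely handles the finitely many free coordinates of $E_{g,N}$ by hand. All of the topology that is actually needed, namely that $\Phi$ carries nowhere dense sets to nowhere dense sets and that each $\Phi(f+\IZ_0^\w)$ lies in $\sigma\C_0$, has already been packaged into Lemmas~\ref{product-Z-set}, \ref{l4.2} and \ref{l4.3}, so nothing further of substance is required.
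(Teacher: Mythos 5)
Your proof is correct and follows essentially the same route as the paper: both reduce the hard inequality $\non(\sigma\C_0)\ge\non(\M)$ to Bartoszy\'nski's combinatorial description of $\non(\M)$ in $\IZ^\w$ and then push a countable cover by translates of $\IZ_0^\w$ through the semi-open bijection $\Phi$ of Lemma~\ref{l4.3}. The only difference is cosmetic: the paper invokes directly that $\non(\M)$ is the least size of a set not coverable by countably many translates of $\IZ_0^\w$, whereas you start from the ``single eventually-different $g$'' form and then manufacture the countable family of translates $f_{N,j}+\IZ_0^\w$ by hand, which is a correct (and slightly more self-contained) way to bridge the two formulations.
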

\begin{proof}

The inequality $\non(\sigma\C_0)\le \non(\M)$ is obvious, since $\sigma\C_0\subseteq \M.$
To prove the inequality  $\non(\sigma\C_0)\ge \non(\M),$ we shall use a combinatorial characterization of the cardinal $\non(\M)$ due to Bartoszynski \cite[2.4.7]{BJ}. According to this characterization, $\non(\M)$ coincides with the smallest cardinality of a subset $A\subseteq\IZ^\w$ which cannot be covered by countably many sets of the form $f+\IZ^\w_0$, $f\in\IZ^\w$.

Let $\Phi:\IZ^\w\to \II^\w$ be the bijective map from Lemma~\ref{l4.3}. Observe that for any subset $A\subseteq \II^\w$ of cardinality $|A|<\non(\M )$ its preimage $\Phi^{-1}(A)\subseteq\IZ^\w$ has cardinality $|\Phi^{-1}(A)|=|A|<\non(\M)$ and by the combinatorial characterization of $\non(\M)$, can be covered by the set $C+\IZ^\w_0$ for some countable set $C\subseteq \IZ^\w$.
Then $A\subseteq\bigcup_{f\in C}\Phi(f+\IZ^\w_0)\in \sigma\C_0$. This implies that $\non(\sigma\C_0)\ge\non(\M)$.
\end{proof}

\begin{lemma}\label{add}
$\add(\sigma\C_0,\M)=\add(\M).$
\end{lemma}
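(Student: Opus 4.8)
The plan is to prove the two inequalities $\add(\sigma\C_0,\M)\ge\add(\M)$ and $\add(\sigma\C_0,\M)\le\add(\M)$ separately; the first is routine and the second carries all the content.

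For the first inequality, note that $\sigma\C_0\subseteq\M$, so any family $\A\subseteq\sigma\C_0$ with $\bigcup\A\notin\M$ is in particular a family of meager subsets of $\II^\w$ whose union is non-meager; hence $|\A|\ge\add(\M)$, and minimising over such $\A$ gives $\add(\sigma\C_0,\M)\ge\add(\M)$. (One uses here that $\bigcup\sigma\C_0=\II^\w\notin\sigma\C_0$: every singleton of $\II^\w$ is a zero-dimensional $Z$-set and hence lies in $\sigma\C_0$, while $\sigma\C_0$ is non-trivial.)

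For the second inequality I would transport the question to the Baire group $\IZ^\w$ through the bijective semi-open map $\Phi:\IZ^\w\to\II^\w$ supplied by Lemma~\ref{l4.3}, which has the property that $\Phi(f+\IZ_0^\w)\in\sigma\C_0$ for every $f\in\IZ^\w$. The first step is to observe that $\Phi$ sends non-meager sets to non-meager sets: since $\Phi$ is continuous, its inverse $\Phi^{-1}:\II^\w\to\IZ^\w$ is an open, hence semi-open, bijection, so (by the lemma on bijective semi-open maps recalled at the beginning of this section) it maps nowhere dense sets to nowhere dense sets and therefore meager sets to meager sets; as $\Phi$ is a bijection, $B=\Phi^{-1}(\Phi(B))$, so $\Phi(B)$ meager would force $B$ meager, i.e. $B$ non-meager implies $\Phi(B)$ non-meager. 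Granting that there is a family $\F\subseteq\IZ^\w$ with $|\F|=\add(\M)$ whose union of translates $\F+\IZ_0^\w=\bigcup_{f\in\F}(f+\IZ_0^\w)$ is non-meager in $\IZ^\w$, the family $\A=\{\Phi(f+\IZ_0^\w):f\in\F\}$ then lies in $\sigma\C_0$, has $|\A|\le\add(\M)$, and satisfies $\bigcup\A=\Phi(\F+\IZ_0^\w)\notin\M$, whence $\add(\sigma\C_0,\M)\le\add(\M)$, as desired.

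Thus everything reduces to producing the family $\F$, and this is the step I expect to be the real obstacle. This is precisely the nontrivial half of the combinatorial characterization $\add(\M)=\min\{|\F|:\F\subseteq\IZ^\w,\ \F+\IZ_0^\w\notin\M\}$ --- the inequality "$\ge$" being automatic, since a union of fewer than $\add(\M)$ translates of the closed nowhere dense set $\IZ_0^\w$ is meager --- and I would quote this characterization from \cite{BJ}, in the same spirit as Theorems~2.4.1 and 2.4.7 of \cite{BJ} were invoked for $\cov$ and $\non$ in Lemmas~\ref{cov} and \ref{non}. If one prefers not to cite it directly, one may use instead that $\add(\sigma\C_0,\M)\le\cov(\sigma\C_0)=\cov(\M)$ always holds (a cover of $\II^\w$ by sets from $\sigma\C_0$ is a family with non-meager union), so that it is enough to construct $\F$ of cardinality $\mathfrak b$ with $\F+\IZ_0^\w$ non-meager; combined with the previous reduction this yields $\add(\sigma\C_0,\M)\le\min\{\mathfrak b,\cov(\M)\}=\add(\M)$. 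Either way, the combinatorial heart of the proof --- showing that the basic meager sets $f+\IZ_0^\w$ are additively cofinal in the meager ideal of $\IZ^\w$ --- is where the work is, all the topology of the Hilbert cube having already been packaged into Lemma~\ref{l4.3}.
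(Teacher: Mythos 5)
Your decomposition into $\add(\sigma\C_0,\M)\le\cov(\M)$ and $\add(\sigma\C_0,\M)\le\mathfrak b$ is exactly the paper's, and the $\cov$-half is handled identically. The $\mathfrak b$-half is where your sketch and the paper diverge, and also where your sketch has a gap: you correctly identify that what is needed is a family $\F\subseteq\IZ^\w$ of cardinality $\mathfrak b$ with $\F+\IZ_0^\w$ non-meager, but neither of the two routes you offer actually produces it. The characterization $\add(\M)=\min\{|\F|:\F+\IZ_0^\w\notin\M\}$ is true, but unlike the $\cov$- and $\non$-characterizations it is not stated as such in \cite{BJ}, and your fallback is precisely the statement to be proved. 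The missing ingredient --- and what the paper cites --- is Theorem 2.2.3 of \cite{BJ}: the Baire space contains a family $\A$ of $\mathfrak b$ compact sets whose union is non-meager. With that in hand your route does close, after one extra (easy) observation: each compact $K\subset\IZ^\w$ has finite $n$-th projections, so $K\subset f_K+\IZ_0^\w$ where $f_K(n)$ is chosen outside the $n$-th projection of $K$; then $\F=\{f_K:K\in\A\}$ has $|\F|\le\mathfrak b$ and non-meager $\F+\IZ_0^\w$, and $\Phi$ pushes this family into $\sigma\C_0$. The paper bypasses both the push-forward and the translate step by applying Theorem~2.2.3 directly inside the copy $(\II\setminus\IQ)^\w\subset\II^\w$ of the Baire space and noting that the compact sets so produced are already zero-dimensional $Z$-sets in $\II^\w$, hence already in $\sigma\C_0$; this is shorter since it converts compacts to members of $\sigma\C_0$ in one move rather than to translates of $\IZ_0^\w$ which are then transported. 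Finally, a small caution on your ``$\Phi$ sends non-meager to non-meager'' step: the paper's lemma on bijective semi-open maps is, by the stated convention, about \emph{continuous} maps, and $\Phi^{-1}$ is not continuous, so the lemma does not literally apply to $\Phi^{-1}$. The conclusion is nevertheless correct and should be argued from the continuity and semi-openness of $\Phi$ itself: for nowhere dense $A\subseteq\II^\w$ and non-empty open $W\subseteq\IZ^\w$, semi-openness gives a non-empty open $V\subseteq\Phi(W)$, nowhere-density of $A$ gives a non-empty open $V'\subseteq V$ with $V'\cap A=\emptyset$, and then $\Phi^{-1}(V')$ is a non-empty open subset of $W$ disjoint from $\Phi^{-1}(A)$.
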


\begin{proof}
The inequality $\add(\sigma\C_0,\M)\ge\add(\M)$ is trivial.
Since $\add(\M)=\min\{\cov(\M),\mathfrak b\}$, the inequality $\add(\sigma\C_0,\M)\le \add(\M)$ will follow as soon as we check that $\add(\sigma\C_0,\M)\le\min\{\cov(\M),\mathfrak b\}$.
 Lemma~\ref{cov} implies that $\add(\sigma\C_0,\M)\le\cov(\sigma\C_0)=\cov(\M)$.

To prove that $\add(\sigma\C_0,\M)\le\mathfrak b$, consider the set $\II\setminus\IQ$ of irrational numbers in $\II$ and its countable power $(\II\setminus\IQ)^\w$, which is homeomorphic to the Baire space $\IZ^\w$ according to the Aleksandrov-Urysohn Theorem~\cite[7.7]{Ke}. By Theorem~2.2.3 of \cite{BJ}, the space $(\II\setminus\IQ)^\w$ (being a topological copy of $\IZ^\w$) contains a family $\A$ of compact subsets of cardinality $|\A|=\mathfrak b$ whose union $\bigcup\A$ is non-meager in $(\II\setminus\IQ)^\w$ and hence is non-meager in the Hilbert cube $\II^\w$. By Lemma~\ref{product-Z-set}, each set $A\in\A$ is a zero-dimensional $Z$-set in $\II^\w$ and hence $\A\subseteq\sigma\C_0$. Since $\bigcup\A\notin\M$, we see that $\add(\sigma\C_0,\M)\le|\A|=\mathfrak b$.
\end{proof}

\begin{lemma}\label{cof}
 $\cof(\sigma\C_0,\M)=\cof(\M).$
\end{lemma}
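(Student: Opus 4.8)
The inequality $\cof(\sigma\C_0,\M)\le\cof(\M)$ is immediate from $\sigma\C_0\subseteq\M$, since any cofinal subfamily of $\M$ already dominates $\sigma\C_0$. For the converse the plan is to use the identity $\cof(\M)=\max\{\non(\M),\mathfrak d\}$ recorded in the Cicho\'n diagram above and to bound $\cof(\sigma\C_0,\M)$ from below by $\non(\M)$ and by $\mathfrak d$ separately.

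The bound $\cof(\sigma\C_0,\M)\ge\non(\M)$ will follow from the soft general fact that for ideals $\I\subseteq\J$ on a set $X=\bigcup\I$ with $\J$ proper one has $\cof(\I,\J)\ge\non(\I)$: if $\A\subseteq\J$ witnesses $\cof(\I,\J)$, pick for each $A\in\A$ a point $x_A\in X\setminus A$ (possible since $A\ne X$); then $B=\{x_A:A\in\A\}$ cannot lie in $\I$, because $B\subseteq A$ for some $A\in\A$ would put $x_A$ in $B\setminus A$, so $\non(\I)\le|B|\le|\A|$. I will apply this with $\I=\sigma\C_0$ (whose union is $\II^\w$, as every singleton is a zero-dimensional $Z$-set) and $\J=\M$, and invoke Lemma~\ref{non} to get $\cof(\sigma\C_0,\M)\ge\non(\sigma\C_0)=\non(\M)$.

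For the bound $\cof(\sigma\C_0,\M)\ge\mathfrak d$ I will transfer the problem to the Baire space through the bijective continuous semi-open map $\Phi\colon\IZ^\w\to\II^\w$ of Lemma~\ref{l4.3}. Since $\Phi$ is continuous and semi-open, both $\Phi$ and $\Phi^{-1}$ carry closed nowhere dense sets to nowhere dense sets (for $\Phi^{-1}$: $\Phi^{-1}(N)$ is closed, and a nonempty open subset of it would be carried by $\Phi$ onto a set with nonempty interior inside $N$), so $\Phi$ matches up the meager ideals of $\IZ^\w$ and $\II^\w$. Thus, if $\A\subseteq\M$ dominates $\sigma\C_0$, the family $\{\Phi^{-1}(A):A\in\A\}$ consists of meager subsets of $\IZ^\w$ and, by Lemma~\ref{l4.3} (each $\Phi(f+\IZ_0^\w)$ belongs to $\sigma\C_0$), contains every translate $f+\IZ_0^\w$, $f\in\IZ^\w$. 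It will therefore suffice to show that any family $\mathcal G$ of meager subsets of $\IZ^\w$ containing every translate $f+\IZ_0^\w$ satisfies $|\mathcal G|\ge\mathfrak d$. To this end I will use the chopped-real description of the meager ideal (Bartoszynski--Judah, \S2.2): each $G\in\mathcal G$ sits inside a set $M_{(y,\Pi)}=\{x:\forall^\infty k\ x\restriction I_k\ne y\restriction I_k\}$ for some interval partition $\Pi=(I_k)_{k\in\w}$ of $\w$ and some $y\in\IZ^\w$. One checks that $f+\IZ_0^\w\subseteq M_{(y,\Pi)}$ holds exactly when $y$ meets $f$ inside all but finitely many blocks of $\Pi$ (if $y$ misses $f$ on infinitely many blocks, glue an $x$ agreeing with $y$ on them and avoiding $f$ elsewhere to get $x\in(f+\IZ_0^\w)\setminus M_{(y,\Pi)}$; conversely, an agreement $y(n)=f(n)$ inside a block forces every $x\in f+\IZ_0^\w$ to differ from $y$ at $n$). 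Hence the chopped reals $(y_G,\Pi_G)$ attached to members of $\mathcal G$ must ``block-match'' every $f\in\IZ^\w$; but a family of fewer than $\mathfrak d$ interval partitions is not cofinal in the partition preorder (of cofinality $\mathfrak d$), which lets one construct an $f$ block-matched by none of the $(y_G,\Pi_G)$ --- a contradiction.

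Combining the two lower bounds with $\cof(\M)=\max\{\non(\M),\mathfrak d\}$ gives $\cof(\sigma\C_0,\M)\ge\cof(\M)$, completing the proof. The step I expect to be the main obstacle is the last combinatorial claim: extracting the full $\mathfrak d$ from ``$\mathcal G$ contains all translates'' --- that is, forcing the accompanying interval partitions to be cofinal in the partition preorder and building the escaping $f$. This is exactly the content behind $\cof(\M)=\max\{\mathfrak d,\non(\M)\}$, and it may be cleaner to quote the corresponding chopped-real characterization of $\cof(\M)$ from \cite{BJ} directly, in parallel with the way \cite{BJ} is invoked for $\cov(\M)$ and $\non(\M)$ in the preceding lemmas.
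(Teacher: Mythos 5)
Your upper bound and the reduction $\cof(\sigma\C_0,\M)\ge\max\{\non(\M),\mathfrak d\}$ via the identity $\cof(\M)=\max\{\non(\M),\mathfrak d\}$ match the paper exactly, and your proof of the general inequality $\cof(\I,\J)\ge\non(\I)$ (pick $x_A\in X\setminus A$ for each $A$ in a dominating family) is a correct and clean way to get $\cof(\sigma\C_0,\M)\ge\non(\sigma\C_0)=\non(\M)$ from Lemma~\ref{non}.

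For the bound $\cof(\sigma\C_0,\M)\ge\mathfrak d$ you take a different route from the paper. The paper passes to the copy $P=(\II\setminus\IQ)^\w$ of the Baire space sitting as a dense $G_\delta$ in $\II^\w$, notes $\sigma\K(P)\subseteq\sigma\C_0$ and $\M(P)=\{M\cap P:M\in\M\}$, and quotes $\cof(\sigma\K(P),\M(P))=\mathfrak d$ from Bartoszy\'nski--Judah~(2.2.3). You instead push through the bijective semi-open map $\Phi$ of Lemma~\ref{l4.3}, as in the $\cov$ and $\non$ lemmas, reducing to: any family $\mathcal G$ of meager subsets of $\IZ^\w$ dominating all translates $f+\IZ_0^\w$ has $|\mathcal G|\ge\mathfrak d$. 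That reduction is correct ($\Phi^{-1}$ carries closed nowhere dense sets to closed nowhere dense sets, for the reason you give), and your computation of when $f+\IZ_0^\w\subseteq M_{(y,\Pi)}$ is right.

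The gap is the final combinatorial step. From ``fewer than $\mathfrak d$ interval partitions are not cofinal'' you cannot directly build an $f$ block-matched by none of the pairs $(y_G,\Pi_G)$: that diagonalization works when the family is countable, but for an uncountable $\kappa<\mathfrak d$ the reals $y_G$ get in the way --- at each coordinate $n$ there are only countably many choices for $f(n)$, so one cannot simultaneously ``totally disagree with $y_G$ on infinitely many $\Pi_G$-blocks'' for $\kappa$-many $G$ by a local construction, and the mere non-cofinality of the partitions supplies no extra leverage against the $y_G$'s. (Indeed the set of $f$ caught by a single $(y,\Pi)$ is meager, so this line of attack most naturally yields the weaker bound $\kappa\ge\cov(\M)$.) The statement you need \emph{is} true, but the cheapest proof is exactly the paper's observation in disguise: every compact $K\subseteq\IZ^\w$ has finite projections $\pi_n(K)$, so $K\subseteq f+\IZ_0^\w$ for any $f$ with $f(n)\notin\pi_n(K)$; hence a family dominating all translates also dominates $\sigma\K(\IZ^\w)$, and $\cof(\sigma\K(\IZ^\w),\M(\IZ^\w))=\mathfrak d$ (BJ 2.2.3) finishes the argument. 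With that patch your proof goes through and is essentially equivalent to the paper's; without it, the ``escaping $f$'' construction as sketched does not close.
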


\begin{proof}
The inequality $\cof(\sigma\C_0,\M)\le \cof(\M)$ is trivial.  Since $\cof(\M)=\max\{\non(\M),\mathfrak d\}$, the inequality $\cof(\sigma\C_0,\M)\ge \cof(\M)$ will follow as soon as we check that
$\cof(\sigma\C_0,\M)\ge\max\{\non(\M),\mathfrak d\}$.

To see that $\cof(\sigma\C_0,\M)\ge\non(\M)$ we shall use the equality $\non(\M)=\non(\sigma\C_0)$ proved in Lemma~\ref{non}. By the definition of the cardinal $\cof(\sigma\C_0,\M)$, there is a subfamily $\A\subset\M$ of cardinality $|\A|=\cof(\sigma\C_0,\M)$ such that every subset $C\in\sigma\C_0$ is contained in some set $A\in\A$. For every $A\in\A$ choose a point $x_A\in\II^\w\setminus A$ and observe that the set $X=\{x_A\}_{A\in\A}$ is contained in no set $A\in\A$, which implies that $X\notin\sigma\C_0$ and hence $\non(\M)=\non(\sigma\C_0)\le|X|\le |\A|=\cof(\sigma\C_0,\M)$.

To prove that $\cof(\sigma\C_0,\M)\ge \mathfrak d$, consider the set $\II\setminus\IQ$ of irrational numbers in $\II$ and its countable power $P=(\II\setminus\IQ)^\w$, which is homeomorphic to the Baire space $\IZ^\w$ according to the Aleksandrov-Urysohn Theorem~\cite[7.7]{Ke}. Let $\M(P)$ be the ideal of meager sets in $P$ and $\sigma\K(P)$ be the $\sigma$-ideal generated by compact subsets of $P$. By Theorem~2.2.3 of \cite{BJ}, $\cof(\sigma\K(P),\M(P))=\mathfrak d$.
Taking into account that $\sigma\K(P)\subseteq\sigma\C_0$ (which follows from Lemma~\ref{product-Z-set}) and  $\M(P)=\{M\cap P:M\in\M\}$, we see that
$$\mathfrak d=\cof(\sigma\K(P),\M(P))\le\cof(\sigma\C_0,\M).$$
Therefore, $\cof(\sigma\C_0,\M)\ge\max\{\non(\M),\mathfrak d\}=\cof(\M)$ and we are done.
\end{proof}

\section{Proof of Theorem~\ref{t1.6}}\label{s:t1.6}

First we elaborate some tools for working with tame $G_\delta$-sets in the Hilbert cube $\II^\w$. We shall need an index-free description of tame $G_\delta$-sets, developed in \cite{BR2}.

A family $\Tau$ of open subsets of a topological space $X$ is defined to be {\em tame} if
\begin{itemize}
\item $\Tau$ is {\em vanishing} in the sense that for each open cover $\U$ of $X$ the family $\{B\in\Tau:\forall U\in\U\;\;B\not\subseteq U\}$ is locally finite;
\item for any distinct sets $A,B\in\Tau$ one of three possibilities holds: $\bar A\cap\bar B=\emptyset$, $\bar A\subseteq B$, or $\bar B\subseteq A$.
\end{itemize}

For a family $\Tau$ of subsets of a set $X$ consider the set
$$\mbox{$\bigcup^\infty\Tau=\bigcap\big\{\bigcup(\Tau\setminus\F):\F$ is a finite subfamily of $\Tau\big\}$}
$$
of all points $x\in X$ that belong to infinitely many sets of the family $\Tau$.

The following characterization of tame $G_\delta$-sets was proved in Proposition 2 of \cite{BR2}.

\begin{proposition}\label{p5.1} A subset $G\subseteq\II^\w$ is a tame $G_\delta$-set in $\II^\w$  if and only if $G=\bigcup^\infty\Tau$ for a tame family $\Tau$ of tame open balls in $\II^\w$.
\end{proposition}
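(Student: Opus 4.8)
The plan is to prove both implications by passing between the ``sequential'' description of a tame $G_\delta$-set and the index-free description via a tame family $\Tau$, the dictionary being $\Tau=\bigcup_{n\in\w}\C(U_n)$ in one direction and, in the other, $U_n=\bigcup\{B\in\Tau:\mathrm{ht}(B)\ge n\}$, where the height $\mathrm{ht}(B)$ counts the members of $\Tau$ whose interiors contain the closure $\bar B$. The only genuinely topological input (everything else being bookkeeping with the nesting relation) is a bridge lemma: \emph{a family $\mathcal B$ of open subsets of $\II^\w$ is vanishing if and only if the family $\{\bar B:B\in\mathcal B\}$ of closures is vanishing}. One direction is trivial since $B\subseteq\bar B$; for the other, given an open cover $\U$ with Lebesgue number $\delta>0$, a closure $\bar B$ not contained in any member of $\U$ has $\diam(B)=\diam(\bar B)\ge\delta$, and the set of all $B\in\mathcal B$ with $\diam(B)\ge\delta$ is contained in $\{B\in\mathcal B:\forall W\in\mathcal W\ B\not\subset W\}$ for the cover $\mathcal W$ of $\II^\w$ by balls of radius $\delta/3$, hence is locally finite by vanishing of $\mathcal B$. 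Here compactness of $\II^\w$ is essential, and I will use this lemma freely.

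For the ``only if'' part, let $G=\bigcap_{n\in\w}U_n$ be a tame $G_\delta$-set with witnessing tame open sets $U_n$ (so $\bigcup\bar\C(U_{n+1})\subset U_n$ and $\bigcup_n\bar\C(U_n)$ is vanishing), and put $\Tau=\bigcup_{n\in\w}\C(U_n)$. By definition of a tame open set each $\C(U_n)$ is a vanishing family of tame balls with pairwise disjoint closures, so $\Tau$ is a family of tame balls, and since $\{\bar B:B\in\Tau\}=\bigcup_n\bar\C(U_n)$ is vanishing, the bridge lemma makes $\Tau$ vanishing. For the nesting clause, take distinct $A\in\C(U_m)$, $B\in\C(U_n)$: if $m=n$ their closures are disjoint; if $m<n$ then $\bar B\subset\bigcup\bar\C(U_n)\subset U_{n-1}\subset\dots\subset U_m$ and the connected set $\bar B$ lies in one component of $U_m$, which is either $A$ (giving $\bar B\subset A$) or another component (giving $\bar A\cap\bar B=\emptyset$); hence $\Tau$ is tame. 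Finally $\bigcup^\infty\Tau=G$: if $x\in G$, choosing the component $C_n\in\C(U_n)$ with $x\in C_n$, the connected set $\bar C_{n+1}\subset U_n$ contains $x\in C_n$ so lies in $C_n$, whence $C_0\supsetneq C_1\supsetneq\dots$ is a strictly decreasing sequence of members of $\Tau$ through $x$; conversely, if $x\in\bigcup^\infty\Tau$ the members of $\Tau$ through $x$ form a chain (by tameness, their closures pairwise meeting at $x$) hitting at most one level $\C(U_n)$ per $n$, hence hitting arbitrarily high levels, so $x\in U_n$ for infinitely many, thus all, $n$, i.e. $x\in G$.

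For the ``if'' part, let $\Tau$ be a tame family of tame balls and $G=\bigcup^\infty\Tau$. First, for every $B\in\Tau$ the set $\Tau_{\supset B}=\{A\in\Tau:\bar B\subset A\}$ is finite: all its members' closures contain $\bar B$, which has positive diameter, so were it infinite it would fail to be locally finite at a point of $\bar B$, contradicting vanishing of $\{\bar A:A\in\Tau\}$. Put $\mathrm{ht}(B)=|\Tau_{\supset B}|$; by tameness the chain $\{B\}\cup\Tau_{\supset B}$ enumerates (decreasingly) as $C_0\supsetneq\dots\supsetneq C_{\mathrm{ht}(B)}=B$ with $\bar C_{j+1}\subset C_j$ and $\mathrm{ht}(C_j)=j$ for all $j$. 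Define $U_n=\bigcup\{B\in\Tau:\mathrm{ht}(B)\ge n\}$. Distinct members of height exactly $n$ have disjoint closures (otherwise one lies in the other's $\Tau_{\supset(\cdot)}$, strictly lowering its height), so each is a relatively clopen connected — hence a connected component — of $U_n$, and every $B$ with $\mathrm{ht}(B)\ge n$ lies in the height-$n$ term $C_n$ of its chain; thus $\C(U_n)=\{B\in\Tau:\mathrm{ht}(B)=n\}$, a vanishing subfamily of $\Tau$, and $U_n$ is a tame open set. Moreover $\bar B=\bar C_{n+1}\subset C_n\subset U_n$ for $B\in\C(U_{n+1})$, so $\bigcup\bar\C(U_{n+1})\subset U_n$; the family $\bigcup_n\bar\C(U_n)=\{\bar B:B\in\Tau\}$ is vanishing by the bridge lemma; and $x\in U_n$ iff $x$ lies in a member of height $\ge n$ iff $x$ lies in at least $n+1$ members of $\Tau$, whence $\bigcap_n U_n=\bigcup^\infty\Tau=G$. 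So $G$ is a tame $G_\delta$-set.

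The two chasings identifying $\bigcup^\infty\Tau$ with $\bigcap_nU_n$ are routine; the step I expect to need the most care is the combinatorial analysis of the height function — that $\Tau_{\supset B}$ is finite and forms a chain realizing every height below $\mathrm{ht}(B)$, and that the height-$n$ members are exactly the connected components of $U_n$ — together with the bridge lemma, which is the single point where compactness of the Hilbert cube (not merely metrizability) enters.
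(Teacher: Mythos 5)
The paper does not prove Proposition~\ref{p5.1}: it is stated with a citation to Proposition~2 of \cite{BR2}, so there is no internal proof to compare against. Interestingly, the truncated fragment appearing after the bibliography (evidently an excerpt from a draft of \cite{BR2} concerning the interval $\II$) uses the same organizing idea as you do, namely stratifying $\Tau$ by the number of members of $\Tau$ properly enclosing a given one; so your ``height'' function is essentially the intended device.

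Your argument reads as correct. The bridge lemma (vanishing of $\mathcal B$ is equivalent to vanishing of $\{\bar B:B\in\mathcal B\}$, via Lebesgue numbers and $\diam B=\diam\bar B$) is sound and is indeed the one place compactness of $\II^\w$ is used. The dictionary $\Tau=\bigcup_n\C(U_n)$ in one direction and $U_n=\bigcup\{B\in\Tau:\mathrm{ht}(B)\ge n\}$ in the other checks out, and the identification $\bigcap_n U_n=\bigcup^\infty\Tau$ via ``$x\in U_n$ iff $x$ lies in at least $n+1$ members of $\Tau$'' is correct.

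Two spots deserve a sentence each, since as written they are asserted rather than justified, and one of them is load-bearing.
\begin{enumerate}
\item In the ``only if'' direction you claim the chain $C_0\supsetneq C_1\supsetneq\cdots$ of components through a point $x\in G$ is \emph{strictly} decreasing. This is exactly what makes $x$ lie in infinitely many distinct members of $\Tau$, and if $C_{n+1}=C_n$ could occur the conclusion $x\in\bigcup^\infty\Tau$ would fail. Strictness holds because a tame ball has nonempty boundary: from $\bar C_{n+1}\subset C_n$ and $\partial C_{n+1}\cong\II^\w\ne\emptyset$ one gets $C_{n+1}\subsetneq\bar C_{n+1}\subseteq C_n$. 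Say so.
\item In the ``if'' direction you conclude that a height-$n$ member $B$, being relatively clopen in $U_n$, is a \emph{connected component} of $U_n$. Relative clopenness only yields that $B$ is a union of components; you also need $B$ connected. This is true for tame balls (the complement of the $Z$-set $\partial B$ in $\bar B\cong\II^\w$ is path-connected, or simply invoke the paper's assertion that the tame balls forming a tame open set are precisely its components), but it should be flagged since it is not part of the stated definition of a tame ball.
\end{enumerate}
With these two remarks inserted, the proof is complete.
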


A $G_\delta$-subset $G\subseteq\II$ will be called a {\em tame $G_\delta$-set} in $\II$ if
for any non-empty open set $U\subseteq\II$ the complement $U\setminus G$ is uncountable.

To establish some structural properties of tame $G_\delta$-sets in $\II$, we need indexed modifications of the notions of vanishing and disjoint families. An indexed family $(X_\alpha)_{\alpha\in A}$ of subsets of a compact metrizable space $X$ is called
\begin{itemize}
\item {\em disjoint} if $X_\alpha\cap X_\beta=\emptyset$ for any distinct indexes $\alpha,\beta\in A$;
\item {\em vanishing} if for each open cover $\U$ of $X$ there set $\{\alpha\in A:\forall U\in\U\;\;X_\alpha\not\subseteq U\}$ is finite.
\end{itemize}

\begin{lemma}\label{l5.2} If $G\subseteq\II$ is a dense tame $G_\delta$-set in $\II$, then for any non-empty open connected subset $U\subsetneqq \II$ and any $\e>0$ there is a sequence $(U_m)_{m\in\w}$ of non-empty open connected subsets of\/ $\II$ such that
\begin{enumerate}
\item the indexed family $(\bar U_m)_{m\in\w}$ is disjoint;
\item $\bigcup_{m\in\w}\bar U_m\subseteq U$;
\item $\diam(U_m)<\e$ for all $m\in\w$;
\item $U\cap G\subseteq\bigcup_{m\in\w}U_m$.
\end{enumerate}
\end{lemma}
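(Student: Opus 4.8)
The plan is to reduce the lemma to the construction of a single Cantor set $C\subseteq U\setminus G$ that ``spans'' $U$, and then to take for the sets $U_m$ the connected components of $U\setminus C$. Under this reduction condition~(4) becomes simply $C\cap G=\emptyset$, condition~(1) becomes the assertion that the complementary intervals of $C$ have pairwise disjoint closures, and (2)--(3) become statements about where $C$ sits.

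First I would normalise. Writing $U=(a,b)$ (an endpoint belonging to $U$ only if it equals $0$ or $1$) and using that $\II\setminus G$ is dense in $\II$, I choose finitely many cut points $a<q_1<\dots<q_\ell<b$ in $\II\setminus G$ with $q_1-a<\e$, $b-q_\ell<\e$ and $q_{j+1}-q_j<\e$; since the $q_j$ lie outside $G$, it suffices to treat each interval $(q_j,q_{j+1})$ separately and concatenate the resulting families, because the closures of the $U_m$ coming from different pieces lie in disjoint intervals and hence stay disjoint. A piece that contains the point $0$ or $1$ of $\II$ needs only a routine variation of what follows, so I may henceforth assume $0<a<b<1$ and $b-a<\e$.

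The heart of the matter is the construction of $C$. Here I would use the one genuinely non-trivial consequence of $G$ being a tame $G_\delta$-set: every non-empty open subinterval $J\subseteq\II$ contains a Cantor set disjoint from $G$ (the set $J\setminus G$ is an uncountable $F_\sigma$-set, so it has a perfect subset). I first take a Cantor set $D_0\subseteq(a,b)\setminus G$; then, recursively, Cantor sets $D_n^-\subseteq(a,\min D_{n-1}^-)\setminus G$ (with $D_0^-:=D_0$) so that $\min D_n^-\to a$. Then $D^-:=\{a\}\cup\bigcup_{n\ge1}D_n^-$ is a Cantor set, since each of its points is non-isolated either inside some $D_n^-$ or as the limit $a=\lim_n\min D_n^-$; moreover $\min D^-=a$ and $D^-\setminus\{a\}\subseteq\II\setminus G$. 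Symmetrically I build a Cantor set $D^+$ with $\max D^+=b$, $\min D^+>\max D_0$ and $D^+\setminus\{b\}\subseteq\II\setminus G$. Now $D:=D^-\cup D_0\cup D^+$ is a Cantor set (a finite union of Cantor sets separated by non-degenerate gaps) with $\min D=a$, $\max D=b$ and $D\cap(a,b)\subseteq\II\setminus G$, and I set $C:=D\cap(a,b)=D\setminus\{a,b\}$.

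It then remains to verify the conclusions. The set $C$ is closed in $U$ (being the trace of the closed set $D$ on the open set $(a,b)$) and perfect (deleting the two extreme points $a,b$ of the perfect set $D$ isolates no remaining point), so $U\setminus C$ is open; I let $(U_m)_{m\in\w}$ list its components, of which there are infinitely many since $C$ accumulates at $a$ from the right. Each $U_m$ is an interval $(c_m,d_m)$ with $c_m,d_m\in C$ (because $\inf C=a$ and $\sup C=b$ are not attained), whence $\bar U_m=[c_m,d_m]\subseteq(a,b)=U$, giving (2), and $\diam U_m\le b-a<\e$, giving (3). If $\bar U_m\cap\bar U_{m'}\ne\emptyset$ for $m\ne m'$, their common point would be an endpoint of two complementary intervals of $C$ and hence isolated in $C$, contradicting perfectness; this gives (1). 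Finally $U\setminus\bigcup_m U_m=C\subseteq\II\setminus G$, which is (4). The step I expect to be the real obstacle is exactly the construction of $C$: the pairwise-disjoint-closures requirement forces $C$ to have no isolated point, so $C$ must be an uncountable perfect set lying inside $\II\setminus G$, and this is precisely where the tameness of $G$ — not merely the density of $G$ together with that of its complement — is essential; a secondary, purely bookkeeping nuisance is arranging that $C$ accumulates at the endpoints of $U$ without containing them, and handling the pieces of $U$ that meet $\{0,1\}$.
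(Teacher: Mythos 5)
Your proof is correct, and the core idea is the same as the paper's: realize $U\setminus\bigcup_m U_m$ as a closed, perfect, zero-dimensional set $C\subseteq U\setminus G$ that accumulates at $\partial U$ and leaves only small gaps, so that the complementary intervals $U_m$ automatically have pairwise disjoint closures inside $U$ and small diameter; the tameness of $G$ (i.e.\ that $V\setminus G$ is uncountable for every nonempty open $V$) is used exactly as you identify it: to manufacture Cantor subsets of $V\setminus G$. The packaging, however, differs. The paper chooses a countable locally finite family $(V_m)$ of intervals of diameter $<\e/2$ with $\bigcup_m V_m$ dense in $U$ and $\bigcup_m\bar V_m\subset U$, picks a Cantor set $K_m\subset V_m\setminus G$ for each $m$, and sets $C=\bigcup_m K_m$; local finiteness gives closedness of $C$ in $U$, density of the $V_m$ gives the accumulation at $\partial U$ and, combined with $\diam V_m<\e/2$, the bound $\diam U_m<\e$. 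You instead first cut $U$ along finitely many points of the (dense) set $\II\setminus G$ into pieces of diameter $<\e$, which disposes of the diameter estimate outright, and then build inside each piece a single Cantor set accumulating at both of its endpoints via the ``increasing chain of Cantor sets plus its limit point'' device. This sidesteps the local-finiteness/density bookkeeping at the cost of the chain construction and the verification that the chain-plus-limit is itself a Cantor set. Both are valid; yours is marginally more elementary in its topology, the paper's is marginally more uniform in that it handles all of $U$ in one pass. One small point worth making explicit in a final write-up: when a piece contains $0$ or $1$, the Cantor set for that piece should accumulate only at the interior endpoint, so that the degenerate component $[0,c_0)$ (resp. $(d_0,1]$) is admissible — you flagged this as routine, and indeed it is, but it does change the shape of one $U_m$.
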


\begin{proof} Being a proper open connected subset of $\II$, the set $U$ is equal to $(a,b)$, $[0,b)$, or $(b,1]$ for some numbers $0\le a<b\le 1$. So, we can choose a disjoint sequence $(V_m)_{m\in\w}$ of non-empty open connected subsets $\II$  such that
\begin{itemize}
\item[(a)]  $\bigcup_{m\in\w}V_m$ is dense in $U$;
\item[(b)] $\bigcup_{m\in\w}\bar V_m\subseteq U$;
\item[(c)] the family $\{V_m\}_{m\in\w}$ is locally finite in $U$;
\item[(d)] $\diam V_m<\e/2$ for all $m\in\w$.
\end{itemize}

Since the $G_\delta$-set $G$ is tame, for every $m\in\w$ the complement $V_m\setminus G$ is uncountable, and hence contains a topological copy $K_m$ of the Cantor cube $K_m$. The condition (c) implies that the union $K=\bigcup_{m\in\w}K_m$ is a closed subset without isolated points in $U$. Consequently, its complement $U\setminus K$ can be written as the countable union $\bigcup_{m\in\w}U_m$ of a disjoint sequence $(U_m)_{m\in\w}$ of open connected subsets of $\II$ such that the family $(\bar U_m)_{m\in\w}$ is disjoint. The condition (d) guarantees that $\diam(U_m)<\e$ for all $m\in\w$. The obvious inclusion $U\cap G\subseteq U\setminus K=\bigcup_{m\in\w}U_m$ completes the proof of the lemma.
\end{proof}

Using Lemma~\ref{l5.2}, by a standard inductive argument, one can prove:

\begin{lemma}\label{l5.3} If a dense $G_\delta$-subset $G$ of $\II$ is tame, then
$G=\bigcap_{n\in\w}\bigcup_{m\in\w}U_{n,m}$ for some vanishing indexed family $(U_{n,m})_{n,m\in\w}$ of open connected subsets of $\II$ such that for every $n\in\IN$ the indexed family $(\bar U_{n,m})_{m\in\w}$ is disjoint and the family $\{\bar U_{n+1,m}\}_{m\in\w}$ refines the family $\{U_{n,m}\}_{m\in\w}$.
\end{lemma}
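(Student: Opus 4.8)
The plan is to iterate Lemma~\ref{l5.2} along a tree-like induction, producing the required doubly-indexed family level by level. First I would set up the base level $n=1$: apply Lemma~\ref{l5.2} with $U=\II$ (formally, one may need to first split $\II$ into two proper open connected pieces, or simply take $U$ to be an open interval whose closure misses some fixed point of $\II\setminus G$, using density and tameness of $G$) and $\e=1$ to obtain a disjoint sequence $(U_{1,m})_{m\in\w}$ of open connected subsets of $\II$ with $\diam(U_{1,m})<1$, with $(\bar U_{1,m})_{m\in\w}$ disjoint, and with $G\subset\bigcup_{m\in\w}U_{1,m}$ (the last inclusion coming from clause (4) of Lemma~\ref{l5.2} applied with $V=U$). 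For the inductive step, assuming $(U_{n,m})_{m\in\w}$ has been constructed, I would apply Lemma~\ref{l5.2} \emph{inside each} $U_{n,m}$ with $\e=2^{-n-1}$ (say), getting a disjoint sequence of open connected subsets of $U_{n,m}$ whose closures are disjoint, lie in $U_{n,m}$, have diameter $<2^{-n-1}$, and capture $U_{n,m}\cap G$; then I would re-enumerate the union over all $m$ of these families by a single index $m'\in\w$ to form $(U_{n+1,m'})_{m'\in\w}$. By construction $\{\bar U_{n+1,m'}\}_{m'}$ refines $\{U_{n,m}\}_m$ and, within each $U_{n,m}$, the closures are disjoint; since the $U_{n,m}$ themselves have disjoint closures, the whole family $(\bar U_{n+1,m'})_{m'\in\w}$ is disjoint.

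The two properties that need genuine verification are that $G=\bigcap_{n\in\w}\bigcup_{m\in\w}U_{n,m}$ and that the total family $(U_{n,m})_{n,m\in\w}$ is vanishing. For the first equality: the inclusion $G\subseteq\bigcup_{m}U_{n,m}$ for each $n$ follows by an easy induction from clause (4) of Lemma~\ref{l5.2} (each point of $G$ in $U_{n,m}$ lands in some $U_{n+1,m'}$), giving $G\subseteq\bigcap_n\bigcup_m U_{n,m}$; for the reverse inclusion, a point $x$ in the intersection lies in a nested sequence $U_{1,m_1}\supset U_{2,m_2}\supset\cdots$ with $\diam(U_{n,m_n})\to 0$, so $\{x\}=\bigcap_n \bar U_{n,m_n}$, and since at stage $n$ the set $U_{n,m_n}$ was obtained from Lemma~\ref{l5.2} as a complement of a Cantor set $K\subseteq U_{n-1,m_{n-1}}\setminus G$ — wait, more carefully: each $\bar U_{n,m}\subset U_{n-1,m'}$ strictly, and the construction ensures $U_{n-1,m'}\setminus\bigcup_m U_{n,m}$ is exactly a (closed, crowded) subset of $\II\setminus G$; so if $x\notin G$, there would be a stage $n$ past which $x$ lies in such a removed Cantor set and thus in no $U_{n,m}$, contradiction. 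Hence $\bigcap_n\bigcup_m U_{n,m}\subseteq G$.

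For the vanishing property: fix an open cover $\U$ of $\II$ with Lebesgue number $\delta>0$. Choose $n_0$ with $2^{-n_0}<\delta$; then for every $n\ge n_0$ and every $m$, $\diam(U_{n,m})<\delta$, so $U_{n,m}$ is contained in some member of $\U$ — hence no set $U_{n,m}$ with $n\ge n_0$ contributes to the exceptional family $\{(n,m):\forall U\in\U\;U_{n,m}\not\subset U\}$. For each of the finitely many levels $n<n_0$, I would note that the family $\{U_{n,m}\}_m$ is locally finite in $\II$ (inherited from clause (c)-type local finiteness in Lemma~\ref{l5.2}, which I would carry along as an extra bookkept property of the construction), so only finitely many $U_{n,m}$ at level $n$ fail to sit inside a member of $\U$; summing over $n<n_0$ leaves a finite exceptional set, which is the vanishing condition. \textbf{The main obstacle} I anticipate is purely organizational: Lemma~\ref{l5.2} as stated does not record local finiteness of the output family (it is hidden in step (c) of its proof), so to make the vanishing argument airtight I would either strengthen the statement of Lemma~\ref{l5.2} to include ``the family $(U_m)_{m\in\w}$ is locally finite in $U$'' (which its proof already gives) or re-prove that refinement at each finite level is locally finite. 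Everything else is a routine nested-interval induction with a shrinking-diameter control.
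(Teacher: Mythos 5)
Your level-by-level construction, the inclusion $G\subseteq\bigcap_n\bigcup_m U_{n,m}$, and the vanishing property are essentially in order, though the last is best handled not by local finiteness --- the components of $U\setminus K$ produced in the proof of Lemma~\ref{l5.2} are in general \emph{not} locally finite in $U$, since they accumulate along the crowded set $K$; it is the auxiliary family $(V_m)$ that is locally finite, not the output $(U_m)$ --- but simply by observing that at a fixed level $n$ the $U_{n,m}$ are pairwise disjoint subintervals of $\II$ of diameter $<2^{-n+1}$, so for any $\delta>0$ only finitely many pairs $(n,m)$ can yield $\diam U_{n,m}\ge\delta$. The genuine gap is in the reverse inclusion $\bigcap_n\bigcup_m U_{n,m}\subseteq G$. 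You argue that if $x$ belongs to every $\bigcup_m U_{n,m}$ and $x\notin G$, then $x$ must eventually lie in one of the Cantor sets removed along the way. That does not follow. Lemma~\ref{l5.2} only asks that each removed $K_m$ be \emph{some} uncountable compact subset of $V_m\setminus G$; nothing forces the union of all removed sets, over all stages, to exhaust $\II\setminus G$. That union is a $\sigma$-compact subset of $\II\setminus G$ which is typically a small proper subset, and any point of $\II\setminus G$ escaping all the removals remains in $\bigcap_n\bigcup_m U_{n,m}$. So with the unadorned iteration of Lemma~\ref{l5.2} the intersection is in general strictly larger than $G$, and the claimed equality fails.

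The repair is to feed the $F_\sigma$-structure of $\II\setminus G$ into the induction. Write $\II\setminus G=\bigcup_{n\in\w}F_n$ with $F_n$ compact and increasing (one may assume $F_0$ contains a point of $(0,1)\setminus G$, which also handles your base case). At stage $n+1$, instead of applying Lemma~\ref{l5.2} inside each $U_{n,m}$, apply it inside each connected component $C$ of $U_{n,m}\setminus F_{n+1}$ (each such $C$ is a proper open connected subset of $\II$, so Lemma~\ref{l5.2} applies). The resulting pieces have closures inside the respective components, so the level-$(n{+}1)$ family is pairwise disjoint-closured as before, refines $\{U_{n,m}\}_m$, and is moreover disjoint from $F_{n+1}$; and since $U_{n,m}\cap G=(U_{n,m}\setminus F_{n+1})\cap G$, clause~(4) of Lemma~\ref{l5.2} applied on each component still yields $G\subset\bigcup_m U_{n+1,m}$. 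With this adjustment $\bigcup_m U_{n,m}\cap F_n=\emptyset$ for every $n$, hence $\bigcap_n\bigcup_m U_{n,m}\subseteq\bigcap_n(\II\setminus F_n)=G$, and combined with the forward inclusion this gives the asserted equality. Some bookkeeping of this kind --- tracking a compact exhaustion of $\II\setminus G$ (equivalently a decreasing open representation of $G$) through the induction --- is indispensable; shrinking diameters alone cannot do it.
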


\begin{lemma}\label{l5.4} For each uncountable cardinal $\kappa\le\mathfrak c$ there is a family $(G_\alpha)_{\alpha\in\kappa}$ of dense tame $G_\delta$-sets in $\II$ such that each subset $X\subseteq \II$ of cardinality $|X|<\kappa$ is contained in some set $G_\alpha$, $\alpha\in\kappa$.
\end{lemma}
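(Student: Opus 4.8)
The plan is to produce the sets $G_\alpha$ as complements of suitable meager $F_\sigma$ sets. Concretely, I would look for a family $(C_\alpha)_{\alpha\in\kappa}$ of \emph{pairwise disjoint} $F_\sigma$-subsets of $\II$, each of which is meager in $\II$ and uncountable inside every non-empty open subset of $\II$, and then set $G_\alpha=\II\setminus C_\alpha$. Granting such a family, every $G_\alpha$ is automatically a dense tame $G_\delta$-set: it is $G_\delta$ because $C_\alpha$ is $F_\sigma$, it is dense because $C_\alpha$ is meager, and for every non-empty open $U\subset\II$ the difference $U\setminus G_\alpha=U\cap C_\alpha$ is uncountable. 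Moreover the covering property becomes immediate: if $X\subset\II$ and $|X|<\kappa$, then, since the $C_\alpha$ are pairwise disjoint, each point of $X$ belongs to at most one $C_\alpha$, so the set $\{\alpha\in\kappa:C_\alpha\cap X\ne\emptyset\}$ has cardinality $\le|X|<\kappa$; choosing $\alpha$ outside this set we get $X\subseteq\II\setminus C_\alpha=G_\alpha$.

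To construct $(C_\alpha)_{\alpha\in\kappa}$, I would first fix a countable base $\{I_n:n\in\w\}$ of non-empty open subsets of $\II$ and recursively pick pairwise disjoint Cantor sets $K_n\subset I_n$: at step $n$ the set $I_n\setminus(K_1\cup\dots\cup K_{n-1})$ is non-empty and open, being $I_n$ with a closed nowhere dense set removed, so it contains a Cantor set $K_n$. Then $\bigcup_{n\in\w}K_n$ is meager in $\II$, being a countable union of nowhere dense sets. Next, using the homeomorphisms $K_n\cong\{0,1\}^\w\cong\{0,1\}^\w\times\{0,1\}^\w$ and fixing an injection $\alpha\mapsto t_\alpha$ of $\kappa$ into $\{0,1\}^\w$ (which exists since $\kappa\le\mathfrak c$), inside each $K_n$ I would carve out a disjoint indexed family $(K_{n,\alpha})_{\alpha\in\kappa}$ of Cantor subsets of $K_n$, namely the images of the slices $\{0,1\}^\w\times\{t_\alpha\}$. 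Finally I put $C_\alpha=\bigcup_{n\in\w}K_{n,\alpha}$.

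The verification of the required properties of $C_\alpha$ is then routine. The set $C_\alpha$ is $F_\sigma$ as a countable union of compact sets; it is meager since $C_\alpha\subseteq\bigcup_{n\in\w}K_n$; and it is uncountable in every non-empty open $U\subset\II$ because $U$ contains some $I_n$ and hence contains the Cantor set $K_{n,\alpha}\subseteq K_n\subseteq I_n$. Pairwise disjointness also follows immediately: for $\alpha\ne\beta$ we have $K_{n,\alpha}\cap K_{n,\beta}=\emptyset$ by the choice inside $K_n$, while for $n\ne m$ we have $K_{n,\alpha}\cap K_{m,\beta}\subseteq K_n\cap K_m=\emptyset$.

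The single idea that makes the argument work --- and the only place where a bit of thought is needed --- is the decision to make the family $(C_\alpha)_{\alpha\in\kappa}$ \emph{disjoint} rather than merely nested or cofinal; with disjointness the combinatorics of the covering property is trivial and rests only on the elementary fact that a Cantor set contains $\mathfrak c$ pairwise disjoint Cantor subsets (via $\{0,1\}^\w\cong\{0,1\}^\w\times\{0,1\}^\w$). I do not foresee any genuine obstacle beyond this; in particular, uncountability of $\kappa$ is not used in an essential way, and the same proof works for every infinite cardinal $\kappa\le\mathfrak c$.
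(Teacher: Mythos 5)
Your proof is correct and follows essentially the same route as the paper: fix a countable base $(U_n)_{n\in\w}$ of $\II$, fill each basic open set with $\kappa$ many pairwise disjoint Cantor sets, and take $G_\alpha$ to be the complement of the union over $n$ of the $\alpha$-th chosen Cantor set. The one place where you diverge is organizational: the paper keeps the families $(C_{n,\alpha})_{\alpha\in\kappa}$ independent across different $n$, so the $\sigma$-compact sets $\bigcup_n C_{n,\alpha}$ need not be pairwise disjoint in $\alpha$, and the covering property is obtained by the small counting argument with $A_n=\{\alpha:X\cap C_{n,\alpha}\ne\emptyset\}$ and $A=\bigcup_n A_n$, using that a countable union of sets of size $<\kappa$ has size $<\kappa$ for uncountable $\kappa$. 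Your extra preliminary step of first carving out pairwise disjoint Cantor sets $K_n\subset I_n$ and then splitting each $K_n$ into $\kappa$ slices makes the $C_\alpha$ globally pairwise disjoint, so the counting at the end reduces to the trivial observation that at most $|X|$ indices are excluded; this is slightly cleaner and, as you note, dispenses with the hypothesis that $\kappa$ be uncountable. Both arguments are elementary and essentially equivalent in content; the paper's is a line shorter in the construction and a line longer in the verification.
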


\begin{proof} Fix a countable base $(U_n)_{n\in\w}$ of the topology of the interval $\II$ and in each set $U_n$ fix a disjoint family $(C_{n,\alpha})_{\alpha\in\kappa}$ of $\kappa$ many Cantor sets. Observe that for every $\alpha\in\kappa$ the complement $G_\alpha=\II\setminus\bigcup_{n\in\w}C_{n,\alpha}$ is a dense tame $G_\delta$-set in $\II$.

Given any subset $X\subseteq \II$ of cardinality $|X|<\kappa$, for every $n\in\w$ consider the set $A_n=\{\alpha\in\kappa: X\cap C_{n,\alpha}\ne\emptyset\}$ and observe that it has cardinality $|A_n|\le|X|<\kappa$. Then the union $A=\bigcup_{n\in\w}A_n$ also has cardinality $|A|<\kappa$ and we can choose an ordinal $\alpha\in\kappa\setminus A$. For this ordinal $\alpha$ we get $X\subseteq \II\setminus\bigcup_{n\in\w}C_{n,\alpha}=G_\alpha$.
\end{proof}

Now we are ready to prove the principal ingredient of the proof of Theorem~\ref{t1.6}.

\begin{proposition}\label{p5.5} Let $G$ be a dense tame $G_\delta$-set in the unit interval $\II$. Then:
\begin{enumerate}
\item the countable power $G^\w$ can be covered by $\mathfrak b$ many tame $G_\delta$-sets in $\II^\w$;
\item any subset $X\subseteq G^\w$ of cardinality $|X|<\mathfrak d$ can be covered by a single tame $G_\delta$-set in $\II^\w$.
\end{enumerate}
\end{proposition}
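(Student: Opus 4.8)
The plan is to prove both statements at once, by attaching to every function $g\in\w^\w$ a tame $G_\delta$-set $\mathcal{G}_g$ in $\II^\w$ and then reading off (1) and (2) from the standard combinatorial descriptions of $\mathfrak b$ and $\mathfrak d$. First, using Lemma~\ref{l5.3} I would fix a presentation $G=\bigcap_{n\in\w}\bigcup_{m\in\w}U_{n,m}$, and for $y\in G$, $n\in\w$ write $V_n(y)=U_{n,m_n(y)}$ for the unique layer-$n$ member containing $y$; then $\overline{V_{n+1}(y)}\subset V_n(y)$ and $\bigcap_n\overline{V_n(y)}=\{y\}$. To a point $x=(x_k)_{k\in\w}\in G^\w$ I would attach a ``budget'' $h_x\in\w^\w$, where $h_x(k)$ measures the layer data (essentially the size of the relevant index $m$) needed to confine the $k$-th coordinate inside the already-localized block of coordinates $<k$ at the required fineness; the point is that $h_x(k)$ is \emph{unbounded} as $x$ ranges over $G^\w$, since the layers of $G$ contain intervals of arbitrarily large index and a coordinate of $x$ may lie in any of them. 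The construction of $\mathcal{G}_g$ will be arranged so that $x\in\mathcal{G}_g$ whenever $g\not\le^* h_x$, that is, $g(k)>h_x(k)$ for infinitely many $k$. Granting this, (1) follows by taking for $\{g_\alpha:\alpha<\mathfrak b\}$ an unbounded family in $(\w^\w,\le^*)$: each $x\in G^\w$ has some $g_\alpha$ satisfying $g_\alpha\not\le^* h_x$, so $x\in\mathcal{G}_{g_\alpha}$; and (2) follows since a family $\{h_x:x\in X\}$ with $|X|<\mathfrak d$ is not dominating, so a single $g$ satisfies $g\not\le^* h_x$ for all $x\in X$, giving $X\subset\mathcal{G}_g$.

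The substance of the proof is the construction of $\mathcal{G}_g$. I would build it as $\mathcal{G}_g=\bigcup^\infty\Tau_g$ for a tame family $\Tau_g$ of tame open balls, so that $\mathcal{G}_g$ is a tame $G_\delta$-set by Proposition~\ref{p5.1}. The family $\Tau_g$ would be organized as a tree whose nodes are tame balls; a node confines finitely many coordinates, each to a short connected interval taken from the one-dimensional layers $U_{n,m}$ of $G$ (obtained by repeatedly subdividing with Lemma~\ref{l5.2} on the relevant one-dimensional slices), and leaves the other coordinates free. Passing to the next level one either refines an already-confined coordinate by one more layer, or opens a new confinement of the next not-yet-confined coordinate, provided its layer index is small enough to be afforded by $g$; thus a branch through $x\in G^\w$ descends forever and confines infinitely many coordinates exactly when $g(k)>h_x(k)$ for infinitely many $k$, in which case the balls on the branch shrink to $\{x\}$ and $x$ lies in infinitely many members of $\Tau_g$.

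Two ingredients need real work. First, the naive ``product slab'' $\{z\in\II^\w:z_j\in(a_j,b_j)\ \text{for all }j\in F\}$ is \emph{not} a tame ball, its boundary being disconnected, so every confining step must be realized by a genuine tame ball (Hilbert-cube boundary that is a $Z$-set in the ball's closure and complement); here one uses that tame balls form a base of $\II^\w$ (\cite[12.2]{Chap}) together with the $Z$-Set Unknotting Theorem~\ref{unknot}, the Approximation Theorem~\ref{Z-approx}, and Lemma~\ref{l2.4}, to replace a slab by an ambient tame ball with the same confining effect after a homeomorphism of $\II^\w$ supported near the confined coordinates. The second point, which I expect to be the harder one, is to ensure the family $\Tau_g$ is \emph{vanishing}, hence a tame family in the sense of \cite{BR2} and so admissible in Proposition~\ref{p5.1}: this fails for the obvious families (e.g.\ the family of all slabs $\{z_k\in U_{n,m}\}$ is not vanishing, since near any $z$ with $z_k\in G$ infinitely many of them accumulate), so the budget $g$ must be spent parsimoniously enough that only finitely many confinements are ever simultaneously ``open'', after which one verifies vanishing by the standard tree-of-vanishing-families argument, using that a level-$n$ ball has all confined coordinates of diameter $<2^{-n}$ while an unconfined coordinate $k$ contributes at most $2^{-k}$. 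Reconciling this parsimony with the requirement ``$g\not\le^* h_x\Rightarrow x\in\mathcal{G}_g$'' is precisely what pins down the correct $h_x$ and closes the argument.
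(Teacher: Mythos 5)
Your high-level strategy matches the paper's: fix a presentation $G=\bigcap_{n}\bigcup_{m}U_{n,m}$ as in Lemma~\ref{l5.3}, attach to each $g\in\w^\w$ a tame $G_\delta$-set and to each $x\in G^\w$ a function, arrange an implication of the form ``$g\not\le^* h_x\Rightarrow x\in\mathcal G_g$,'' and read off (1) and (2) from the definitions of $\mathfrak b$ and $\mathfrak d$. You also correctly diagnose the two places where the work lies --- that product slabs are not tame balls, and that the resulting family must be vanishing. But the proposal does not actually resolve either difficulty: you postpone both, and your final sentence explicitly says the reconciliation ``closes the argument'' without closing it. As submitted this is a plan, not a proof.

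The missing idea is small but essential. The paper does \emph{not} repair slabs by ambient homeomorphisms. Instead, at stage $n$ it confines coordinates $0,\dots,n-1$ to components $U_{n,m}$ with $m\le f(n)$, \emph{and additionally confines the $n$-th coordinate to a half-open interval $[0,a_n^f)$ or $(b_n^f,1]$ touching an endpoint of $\II$}, with all coordinates $>n$ free. That one boundary-touching factor deletes an open $n$-cell from the boundary sphere of the $(n+1)$-box, so the frontier of the product in $\II^\w$ becomes $\bigl(S^n\setminus(\text{open }n\text{-cell})\bigr)\times\II^\w\cong D^n\times\II^\w\cong\II^\w$, with the expected $Z$-set properties; the slab is a tame ball \emph{as it stands}. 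Your proposed surgery route --- replacing slabs by ambient-homeomorphic tame balls using Theorems~\ref{unknot}, \ref{Z-approx} and Lemma~\ref{l2.4} --- is substantially harder to control, because after pushing each ball around you still have to certify that the whole family is tame (nested-or-disjoint closures, vanishing) and that the containment ``$g\not\le^* h_x\Rightarrow x\in\mathcal G_g$'' survives the perturbations; none of that is even sketched. Relatedly, your $h_x$ is described only informally (``layer data needed to confine the $k$-th coordinate inside the block of coordinates $<k$''); the paper's choice is the cleaner $f_x(n)=\min\{m: x_0,\dots,x_n\in\bigcup_{j\le m}U_{n,j}\}$, which simultaneously constrains the first $n{+}1$ coordinates using the single $n$-th layer of $G$ and makes Claim~\ref{cl5.8} a one-line verification. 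Until you either adopt the boundary-touching trick or genuinely carry out the surgery-plus-tameness check, the proof has a gap precisely at the point you yourself flag as the hard one.
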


\begin{proof} By Lemma~\ref{l5.3}, $G=\bigcap_{n\in\w}\bigcup_{m\in\w}U_{n,m}$ for some vanishing indexed family $\U=(U_{n,m})_{n,m\in\w}$ of open connected subsets of $\II$ such that for every $n\in\IN$ the indexed family $(\bar U_{n,m})_{m\in\w}$ is disjoint and the family  $\{\bar U_{n+1,m}\}_{m\in\w}$ refines the family $\{U_{n,m}\}_{m\in\w}$.

For every increasing function $f:\w\to\w$ we define a tame $G_\delta$-set $\Tau^f$ in $\II^\w$ as follows. For every $n\in\w$ let
$$
\begin{aligned}
a^f_n&=\max\{\sup U_{n,m}:m\le f(n),\;1\notin \bar U_{n,m}\}\mbox{ and }\\
b^f_n&=\min(\{1\}\cup \{\inf U_{n,m}:m\le f(n),\;1\in\bar U_{n,m}\}).
\end{aligned}
$$
Since the indexed family $(\bar U_{n,m})_{m\in\w}$ is disjoint, $0<a^f_n<b^f_n\le 1$. Moreover, $\bigcup_{m\le f(n)}U_{n,m}\subseteq [0,a_n^f)\cup(b_n^f,1]$.

Now for every $n\in\w$ consider the finite family
$$\Tau^f_n=\Big\{\prod_{i\in\w}V_i:V_0,\dots,V_{n-1}\in\{U_{n,m}\}_{m\le f(n)},\;V_n\in\{[0,a^f_n),(b^f_n,1]\},\;V_i=\II\mbox{ for all }i>n\Big\}$$
of tame open balls in the Hilbert cube $\II^\w$.

\begin{claim}\label{cl5.6} The family $\{\bar V:V\in\Tau^f_n\}$ is disjoint.
\end{claim}

\begin{proof} This follows immediately from the fact that the family $\{\bar U_{n,m}\}_{m\in\w}$ is disjoint.
\end{proof}

\begin{claim} The family $\Tau^f=\bigcup_{n\in\w}\Tau_n^f$ is tame.
\end{claim}

\begin{proof} We need to check two conditions from the definition of a tame family.
\smallskip

1. The vanishing property of  $\Tau$ will follow as soon as we check that for each $\e>0$ the subfamily $\{V\in\Tau:\diam(V)\ge\e\}$ is finite. Here we consider the metric
$$d\big((x_n)_{n\in\w},(y_n)_{n\in\w}\big)=\max_{n\in\w}\frac{|x_n-y_n|}{2^n}$$
on the Hilbert cube $\II^\w$.

Since the double sequence $(U_{n,m})_{m\in\w}$ is vanishing, there is $n\in\w$ so large that $2^{-n}<\e$ and $\diam(U_{k,m})<\e$ for all $k\ge n$ and all $m\in\w$. Then for every $k\ge n$, every tame open ball $V\in\Tau_k^f$ has diameter $\diam(V)<\e$. This implies that the family $\{V\in\Tau^f:\diam(V)\ge \e\}\subseteq\bigcup_{k<n}\Tau^f_k$ is finite.
\smallskip

2. Given two distinct sets $V,W\in \Tau^f$, we need to check that $\overline{V}\cap\overline{W}=\emptyset$, $\overline{V}\subseteq W$ or $\overline{W}\subseteq V$. Find numbers $k,n\in\w$ such that $V\in\Tau_k^f$ and $W\in\Tau^f_n$. We lose no generality assuming that $k\le n$. If $n=k$, then $\bar V\cap\bar W=\emptyset$ by Claim~\ref{cl5.6}. Next, consider the case $k<n$. It follows that $V=\prod_{i\in\w}V_i$ and $W=\prod_{i\in\w}W_i$ for some sets $V_0,\dots,V_{k-1}\in \{U_{k,m}\}_{m\le f(k)}$, $V_k\in\{[0,a^f_k),(b_k^f,1]\}$, $V_i=\II$ for $i>k$, and
$W_0,\dots,W_{n-1}\in \{U_{n,m}\}_{m\le f(n)}$, $W_n\in\{[0,a^f_n),(b_n^f,1]\}$, and $W_i=\II$ for $i>n$.
The choice of the family $\{U_{i,j}\}_{i,j\in\w}$ guarantees that either $\bar V_i\cap \bar W_i=\emptyset$ for some $i<k$ or else $\bar W_i\subseteq V_i$ for all $i<k$. In the first case the sets $V$ and $W$ have disjoint closures. So, it remains to consider the second case:  $\bar W_i\subseteq V_i$ for all $i<k$. Consider the set $W_k$, which is equal to $U_{n,m}$ for some $m\le f(n)$.
It follows that $\bar U_{n,m}\subseteq U_{k,m'}$ for some number $m'\in\w$.
Since the family $\{\bar U_{k,i}\}_{i\in\w}$ is disjoint and consists of connected subsets of $\II$ four cases are possible: (i) $U_{k,m'}\subseteq [0,a^f_k)$,  (ii) $b^f_k<1$ and $U_{k,m'}\subseteq (b^f_k,1]$, (iii) $b^f_k<1$ and $\bar U_{k,m'}\subseteq (a^f_k,b^f_k)$, and (iv) $b^f_k=1$ and $\bar U_{k,m'}\subseteq (a^f_k,1]$. In the case (i) we get $\overline{W}\subseteq V$ if $V_k=[0,a^f_k)$ and $\overline{W}\cap\overline{V}=\emptyset$ if $V_k=(b^f_k,1]$.  In the case (ii) we get $\overline{W}\subseteq V$ if $V_k=(b^f_k,1]$ and $\overline{W}\cap\overline{V}=\emptyset$ if  $V_k=[0,a^f_k)$. In the cases (iii) and (iv) we get $\overline{W}\cap\overline{V}=\emptyset$.
\end{proof}

Since $\Tau^f$ is a tame family consisting of tame open balls in the Hilbert cube $\II^\w$, the set $T^f=\bigcup^\infty\Tau^f$ is a tame $G_\delta$-set in $\II^\w$ by Proposition~\ref{p5.1}.

For each point $x=(x_i)_{i\in\w}\in G^\w$, consider the function $f_x:\w\to\w$ assigning to each number $n\in\w$ the smallest number $f_x(n)$ such that $x_0,\dots,x_n\in \bigcup_{m\le f_x(n)}U_{n,m}$.

\begin{claim}\label{cl5.8} For a function $f\in\w^\w$ and a point $x\in G^\w$ with $f\not\le^*f_x$ we get $x\in T^f$.
\end{claim}

\begin{proof} It follows that for every $n\in\w$ with $f_x(n)< f(n)$, we get
$$x_0,\dots,x_n\in \bigcup_{m\le f_x(n)}U_{n,m}\subseteq \bigcup_{m\le f(n)}U_{n,m},$$ which implies that $x=(x_i)_{i\in\w}\in \bigcup\Tau^f_n$ and hence $x\in\bigcup^\infty \Tau^f=T^f$ as the set $\{n\in\w:f_x(n)<f(n)\}$ is infinite.
\end{proof}

Now we can complete the proof of Proposition~\ref{p5.5}.

1. By the definition of the cardinal $\mathfrak b$, there is a subset $\F\subseteq\w^\w$ of cardinality $|\F|=\mathfrak b$ such that for every $g\in\w^\w$ there is $f\in\F$ with $f\not\le^* g$.  Then for any point $x\in G^\w$ there is a function $f\in\F$ such that $f\not\le^* f_x$. By Claim~\ref{cl5.8}, $x\in T^f\subseteq \bigcup_{g\in\F}T^g$, which means that $G^\w$ is covered by $\mathfrak b$ many tame $G_\delta$-sets $T^g$, $g\in\F$, in $\II^\w$.
\smallskip

2. If $X\subseteq G^\w$ is a subset of cardinality $|X|<\mathfrak d$, then the set $\{f_x:x\in X\}$ is not dominating in $\w^\w$ and hence there is a function $f\in\w^\w$ such that $f\not\le^* f_x$ for all $x\in X$. By Claim~\ref{cl5.8}, $x\in T^f$ for each $x\in X$, which means that $X$ is covered by the tame $G_\delta$-set $T^f$.
\end{proof}

The following two lemmas imply Theorem~\ref{t1.6}.

\begin{lemma} $\cov(\sigma\G_0)\le\add(\M)$.
\end{lemma}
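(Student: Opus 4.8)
The plan is to deduce this from Proposition~\ref{p5.5}(1) together with Lemma~\ref{l5.4}, using the identity $\add(\M)=\min\{\mathfrak b,\cov(\M)\}$ recorded in the Cicho\'n diagram above. Since $\cov(\sigma\G_0)\le\cov(\M)$ holds automatically (every minimal dense $G_\delta$-set is meager, so $\sigma\G_0\subseteq\M$ and hence $\cov(\sigma\G_0)\ge\cov(\M)$ — wait, that is the wrong direction, so this inequality must instead come from the structure of tame $G_\delta$-sets), the real content is to show $\cov(\sigma\G_0)\le\mathfrak b$ and $\cov(\sigma\G_0)\le\cov(\M)$ separately. I expect the bound $\cov(\sigma\G_0)\le\mathfrak b$ to follow directly by covering $\II^\w$ by $\mathfrak b$ many tame $G_\delta$-sets, and $\cov(\sigma\G_0)\le\cov(\M)$ to be the genuinely new step requiring a separate argument.

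First I would handle the $\mathfrak b$ bound. Fix a dense tame $G_\delta$-set $G\subset\II$; by Proposition~\ref{p5.5}(1), the power $G^\w$ is covered by $\mathfrak b$ many tame $G_\delta$-sets in $\II^\w$, each of which is minimal (by Theorem~4 of \cite{BR2}) and hence lies in $\G_0\subseteq\sigma\G_0$. The complement $\II^\w\setminus G^\w=\bigcup_{n\in\w}\pi_n^{-1}(\II\setminus G)$, where $\pi_n:\II^\w\to\II$ is the $n$-th projection, is a countable union of sets each homeomorphic to $\II^\w\times(\II\setminus G)$; since $\II\setminus G$ is zero-dimensional (as $G$ is a dense tame $G_\delta$, its complement is contained in the complement of a minimal dense $G_\delta$, hence meets every interval in a set that is still ``small'' — more precisely one covers $\II\setminus G$ itself by countably many Cantor sets, and a careful argument via Lemma~\ref{product-Z-set}-type products shows each $\pi_n^{-1}(\II\setminus G)$ is covered by countably many minimal dense $G_\delta$-sets, or more simply lies in $\sigma\G_0$ because it is meager and, after a homeomorphism, contained in a minimal dense $G_\delta$). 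Thus $\II^\w=G^\w\cup(\II^\w\setminus G^\w)$ is covered by $\mathfrak b$ many sets of $\sigma\G_0$, giving $\cov(\sigma\G_0)\le\mathfrak b$.

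Next I would handle the $\cov(\M)$ bound. Here the idea is to use Lemma~\ref{l5.4} with $\kappa=\cov(\M)^+$ — no, rather, to exploit that on the interval $\II$ any dense tame $G_\delta$-set has complement coverable by countably many nowhere dense closed sets, and that $\cov(\M)$ many such complements can be arranged to cover $\II$ (by the combinatorial characterization $\cov(\M)=\min\{|\F|:\F\subseteq\IZ^\w,\ \F+\IZ_0^\w=\IZ^\w\}$ used already in Lemma~\ref{cov}), and then transfer this to $\II^\w$ via a semi-open-type bijection exactly as in Section~\ref{s:t1.5}. Concretely, I would produce a family $\{G_\alpha:\alpha<\cov(\M)\}$ of dense tame $G_\delta$-sets in $\II^\w$ whose union is all of $\II^\w$, by taking products $G_\alpha = \varphi^\w(\cdots)$ where the building blocks come from a $\cov(\M)$-sized covering family of the Baire space by translates of $\IZ_0^\w$; one checks the images are (contained in) tame $G_\delta$-sets in $\II^\w$ using Proposition~\ref{p5.1} and the fact that images of nowhere dense sets under the bijective semi-open map are nowhere dense.

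The main obstacle I anticipate is the second bound, $\cov(\sigma\G_0)\le\cov(\M)$: unlike the $\mathfrak b$ bound which is essentially immediate from Proposition~\ref{p5.5}(1), getting a $\cov(\M)$-sized cover of $\II^\w$ by \emph{tame} $G_\delta$-sets requires simultaneously controlling the combinatorics of the cover (to get size $\cov(\M)$) and the tameness/vanishing structure of each piece (to land inside $\sigma\G_0$). The cleanest route is probably to reuse the machinery of Lemma~\ref{l4.3}: the bijective semi-open map $\Phi:\IZ^\w\to\II^\w$ sends each translate $f+\IZ_0^\w$ into $\sigma\C_0\subseteq$ (closed $Z$-set products), and one then needs the finer observation that such product sets are in fact contained in minimal dense $G_\delta$-sets — or directly that their complements are covered by a single minimal dense $G_\delta$. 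I would expect the detailed verification of this last containment, and of the tame-family conditions for the resulting covering sets, to occupy the bulk of the proof.
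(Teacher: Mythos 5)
Your plan to split into the two bounds $\cov(\sigma\G_0)\le\mathfrak b$ and $\cov(\sigma\G_0)\le\cov(\M)$ is the right overall strategy, and your $\cov(\M)$ bound, while more elaborate than needed, would work; the paper dispatches it in one line via $\sigma\C_0\subseteq\sigma\G_0$ (Theorem~\ref{t1.2}) together with $\cov(\sigma\C_0)=\cov(\M)$ from Lemma~\ref{cov}, so there is no need to redo the semi-open-map argument. (A side remark: your parenthetical attempt to justify $\sigma\G_0\subseteq\M$ is wrong in a way that matters -- minimal dense $G_\delta$-sets are \emph{dense} $G_\delta$, hence non-meager, and the whole point of $\sigma\G_0$ in this paper is that it is \emph{not} contained in $\M$.)

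The real gap is in your $\mathfrak b$ bound. After fixing a single dense tame $G_\delta$-set $G\subset\II$ and covering $G^\w$ by $\mathfrak b$ many tame $G_\delta$-sets via Proposition~\ref{p5.5}(1), you then have to handle $\II^\w\setminus G^\w=\bigcup_{n\in\w}\pi_n^{-1}(\II\setminus G)$, and this set simply does not belong to $\sigma\G_0$: each $\pi_n^{-1}(\II\setminus G)$ contains slices of the form $\II^n\times\{t\}\times\II^\w$ with $t\in\II\setminus G$, i.e. full topological copies of the Hilbert cube. Since $\sigma\G_0\subseteq\sigma\mathcal D_0$ (Theorem~\ref{t1.4}), no countable union of minimal dense $G_\delta$-sets -- indeed, no countable-dimensional set -- can cover a copy of $\II^\w$. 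Being meager is nowhere near sufficient to lie in $\sigma\G_0$. The paper sidesteps the leftover entirely by not fixing a single $G$: invoke Lemma~\ref{l5.4} with $\kappa=\w_1$ to obtain a family $(G_\alpha)_{\alpha\in\w_1}$ of dense tame $G_\delta$-sets in $\II$ such that every countable subset of $\II$ lies inside some $G_\alpha$. Then for each $x=(x_n)_{n\in\w}\in\II^\w$ the countable set $\{x_n:n\in\w\}$ is contained in some $G_\alpha$, so $x\in G_\alpha^\w$; hence $\II^\w=\bigcup_{\alpha\in\w_1}G_\alpha^\w$ and no complement ever needs to be covered. Applying Proposition~\ref{p5.5}(1) to each $G_\alpha^\w$ and taking the union over $\alpha\in\w_1$ yields a cover of $\II^\w$ by $\w_1\cdot\mathfrak b=\mathfrak b$ tame $G_\delta$-sets. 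Replacing your single-$G$ attempt with this $\w_1$-indexed family is the missing idea.
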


\begin{proof} Since $\add(\M)=\min\{\cov(\M),\mathfrak b\}$, it suffices to prove that  $\cov(\sigma\G_0)\le\min\{\cov(\M),\mathfrak b\}$. The inequality $\cov(\sigma\G_0)\le\cov(\M)$ trivially follows from the inclusion $\sigma\C_0\subseteq\sigma\G_0$ and the equality $\cov(\sigma\C_0)=\cov(\M)$ proved in Lemma~\ref{cov}.

To prove that $\cov(\sigma\G_0)\le\mathfrak b$, apply Lemma~\ref{l5.4} and find an uncountable family $(G_\alpha)_{\alpha\in\w_1}$ of dense tame $G_\delta$-sets in $\II$ such that each countable subset of $\II$ is contained in some $G_\alpha$. This implies that $\II^\w=\bigcup_{\alpha\in\w_1}G_\alpha^\w$.
By Proposition~\ref{p5.5}(1), each set $G_\alpha^\w$ can be covered by $\mathfrak b$ tame $G_\delta$-subsets of the Hilbert cube. Consequently, $\II^\w$ can be covered by $\w_1\times\mathfrak b=\mathfrak b$ tame $G_\delta$-subsets of the Hilbert cube, which means that $\cov(\sigma\G_0)\le \mathfrak b$.
\end{proof}

\begin{lemma} $\non(\sigma\G_0)\ge\cof(\M)$.
\end{lemma}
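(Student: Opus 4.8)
The idea is to reduce the claim, via the equality $\cof(\M)=\max\{\non(\M),\mathfrak d\}$ from Cicho\'n's diagram in the introduction, to the two inequalities $\non(\sigma\G_0)\ge\non(\M)$ and $\non(\sigma\G_0)\ge\mathfrak d$. The first is immediate from the inclusion $\sigma\C_0\subseteq\sigma\G_0$ (already invoked in the proof of the previous lemma): a subset of $\II^\w$ that does not belong to $\sigma\G_0$ does not belong to $\sigma\C_0$ either, so $\non(\sigma\G_0)\ge\non(\sigma\C_0)$, and $\non(\sigma\C_0)=\non(\M)$ by Lemma~\ref{non}.

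To prove $\non(\sigma\G_0)\ge\mathfrak d$ I would check that every subset $X\subseteq\II^\w$ with $|X|<\mathfrak d$ lies in $\sigma\G_0$. The first step is to trap $X$ in a power of a tame $G_\delta$-set of the interval. Put $\kappa=\max\{|X|^+,\w_1\}$; then $\kappa$ is an uncountable cardinal with $\kappa\le\mathfrak c$ (as $|X|<\mathfrak d\le\mathfrak c$). Writing $\pi_n\colon\II^\w\to\II$ for the $n$-th coordinate projection, the set $\bigcup_{n\in\w}\pi_n(X)\subseteq\II$ has cardinality $<\kappa$, so by Lemma~\ref{l5.4} it is contained in a dense tame $G_\delta$-set $G\subseteq\II$, whence $X\subseteq G^\w$. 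Since $|X|<\mathfrak d$, Proposition~\ref{p5.5}(2) then supplies a single tame $G_\delta$-set $T$ in $\II^\w$ with $X\subseteq T$. It remains to note that every tame $G_\delta$-set in $\II^\w$ belongs to $\sigma\G_0$: by Proposition~\ref{p5.1} it has the form $\bigcup^\infty\Tau$ for a tame family $\Tau$ of tame open balls, and one may enlarge $\Tau$, keeping it tame, to a family $\Tau'$ for which the corresponding set $\bigcup^\infty\Tau'$ is dense in $\II^\w$; then $T\subseteq\bigcup^\infty\Tau'$ is a dense tame $G_\delta$-set, i.e.\ a minimal dense $G_\delta$-set by \cite{BR2}, and therefore a member of $\G_0\subseteq\sigma\G_0$. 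As $\sigma\G_0$ is hereditary, $X\subseteq T$ gives $X\in\sigma\G_0$.

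Combining the two inequalities gives $\non(\sigma\G_0)\ge\max\{\non(\M),\mathfrak d\}=\cof(\M)$. Everything here is routine except the last step: one needs that a tame $G_\delta$-set which is not a priori dense still belongs to $\sigma\G_0$, and this comes down to the (standard but slightly technical) fact that any tame family of tame balls in $\II^\w$ can be enlarged, preserving tameness, so that its $\bigcup^\infty$-set becomes dense. I expect that enlargement to be the main point requiring care; the rest is cardinal arithmetic and direct appeals to Lemma~\ref{l5.4} and Proposition~\ref{p5.5}.
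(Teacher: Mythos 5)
Your proof follows the paper's route step for step: the split $\cof(\M)=\max\{\non(\M),\mathfrak d\}$, the reduction of $\non(\sigma\G_0)\ge\non(\M)$ to Lemma~\ref{non} via $\sigma\C_0\subseteq\sigma\G_0$, trapping $X$ inside $G^\w$ for a dense tame $G_\delta$-set $G\subset\II$ via Lemma~\ref{l5.4}, and then invoking Proposition~\ref{p5.5}(2). Your explicit $\kappa=\max\{|X|^+,\w_1\}$ is exactly the implicit choice behind the paper's line ``$|Y|\le\aleph_0\cdot|X|<\mathfrak d\le\mathfrak c$''.

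The one place where you diverge is the final step, and you are right to pause there: the tame $G_\delta$-set $T$ produced by Proposition~\ref{p5.5}(2) is not a priori dense, so it is not immediately a member of $\G_0$. The paper silently passes over this by writing ``$\non(\sigma\G_0)\ge\non(\G_0)\ge\mathfrak d$'', but nothing in the construction of $T^f$ forces density --- the finite initial segment $\{U_{n,m}\}_{m\le f(n)}$ could, for an unfortunate enumeration, cluster in one part of $\II$. Your proposed fix (enlarge the tame family $\Tau$ to a tame $\Tau'\supset\Tau$ with $\bigcup^\infty\Tau'$ dense, noting $\bigcup^\infty\Tau\subseteq\bigcup^\infty\Tau'$ since enlarging the family can only increase the set of points lying in infinitely many of its members) is one way to plug the hole. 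A perhaps more economical fix, internal to the proof of Proposition~\ref{p5.5}, is to observe that $T^f\subseteq T^g$ whenever $f\le g$ pointwise, and then replace $f$ by a faster-growing $g\ge f$ chosen so that $\bigcup_{m\le g(n)}U_{n,m}$ is $2^{-n}$-dense in $\II$; the set $T^g$ is then a dense tame $G_\delta$-set still containing $X$. Either way, the detail you flag is a genuine omission in the paper's argument, and your instinct that the enlargement (or some substitute for it) is ``the main point requiring care'' is accurate.
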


\begin{proof} Since $\cof(\M)=\max\{\non(\M),\mathfrak d\}$, it suffices to prove that  $\non(\sigma\G_0)\ge\max\{\non(\M),\mathfrak d\}$. Then inequality $\non(\sigma\G_0)\ge\non(\M)$ trivially follow from the inclusion $\sigma\C_0\subseteq\sigma\G_0$ and the equality $\non(\sigma\C_0)=\non(\M)$ proved in Lemma~\ref{non}.

To prove that $\non(\sigma\G_0)\ge\mathfrak d$, fix any subset $X\subseteq \II^\w$ of cardinality $|A|<\mathfrak d$. Then $X\subseteq Y^\w$ for some subset $Y\subseteq \II$ of cardinality $|Y|\le\aleph_0\cdot|X|<\mathfrak d\le\mathfrak c$. By Lemma~\ref{l5.4}, $Y$ is contained in some dense tame $G_\delta$-set $G\subseteq\II$. By Proposition~\ref{p5.5}(2), the set $X\subseteq G^\w$ can be covered by a tame $G_\delta$-subset of the Hilbert cube $\II^\w$, which implies that $\non(\sigma\G_0)\ge\non(\G_0)\ge\mathfrak d$.
\end{proof}

\section{Open Problems}

In this section we collect some open problems on topologically invariant $\sigma$-ideals on $\II^\w$.
The most intriguing problems concern the $\sigma$-ideal $\sigma\G_0$.

\begin{problem} Is $\add(\sigma\GG_0)=\cov(\sigma\GG_0)=\w_1$ and $\non(\sigma\GG_0)=\cof(\sigma\GG_0)=\mathfrak c$?
\end{problem}

\begin{problem} Is $\cov(\sigma\GG_0)=\mathfrak c$ under Martin's Axiom? Under PFA?
\end{problem}

\begin{problem} Is $\sigma\GG_0=\sigma\mathcal D_0$?
\end{problem}

It Proposition 4 of \cite{BR2} it was proved that for any dense $G_\delta$-set $G\subseteq\II$ the countable power $G^\w$ does not belong to the family $\GG_0$ of minimal dense $G_\delta$-sets in $\II^\w$.

\begin{problem} Is  $G^\w\in\sigma\G_0$ for some dense $G_\delta$-set $G\subseteq \II$?
\end{problem}

\begin{problem} Let $\I$ be a maximal non-trivial topologically invariant $\sigma$-ideal with Borel base on $\II^\w$. Is $\I=\M$?
\end{problem}

A closed subset $A\subseteq\II^\w$ is called a {\em homological $Z_\w$-set} in $\II^\w$ if $A\times\{0\}$ is a $Z_\w$-set in $\II^\w\times[-1,1]$. Let $\mathcal B$ be the family of all Borel subsets $B\subseteq\II^\w$ such that the closure $\bar C$ of each connected subset $C\subseteq B$ in $\II^\w$ is a homological $Z_\w$-set in $\II^\w$. It follows from Main Lemma of \cite{BC} that the $\sigma$-ideal $\sigma\mathcal B$ generated by the family $\mathcal B$ is non-trivial.

\begin{problem} Is the ideal $\sigma\mathcal B$ a maximal non-trivial topologically invariant $\sigma$-ideal with Borel base on $\II^\w$?
\end{problem}

\section{Acknowledgment}

The authors would like to thank S\l awomir Solecki for turning their attention to the $G_\delta$-ideals with the property $(*)$ and the Tukey reducibility of such ideals to the ideal $\NWD$.


\begin{thebibliography}{}

\bibitem{BC} T.~Banakh, R.~Cauty, {\em On universality of countable and weak products of sigma hereditarily disconnected spaces}, Fund. Math. {\bf 167}, (2001) 97--109.


\bibitem{BCZ} T.~Banakh, R.~Cauty, M.~Zarichnyi, {\em Open problems in infinite-dimensional topology}, in: Open Problems in Topology, II (E.Pearl ed.), Elsevier, (2007), 601--624.

\bibitem{BMRZ} T. Banakh, M. Morayne, R. Ra{\l}owski, S. {\.Z}eberski, {\em Topologically invariant $\sigma$-ideals in Euclidean spaces}, preprint\newline (http://arxiv.org/abs/1208.4823).

\bibitem{BR2} T.~Banakh, D.~Repov\v s, {\em Universal meager $F_\sigma$-sets in locally compact manifolds}, Comment. Math. Univ. Carolin. {\bf 54}:2 (2013), 179--188.


\bibitem{BJ}  T.~Bartoszy\' nski, H.~Judah, {\em Set theory: on the structure of the real line}, Wellesley, MA: A. K. Peters Ltd., 1995.

\bibitem{BartCich} T.~Bartoszy\'nski, H.~Judah, S.~Shelah, {\em  The Cichon diagram}, J. Symbolic Logic {\bf 58}:2 (1993), 401--423.


\bibitem{Blass} A.~Blass, {\em Combinatorial cardinal characteristics of the continuum}, in: Handbook of set theory, 395--489, Springer, Dordrecht, 2010.

\bibitem{Chap}  T.A.~Chapman, {\em Lectures on Hilbert cube manifolds}, Amer. Math. Soc., Providence, R. I., 1976.

\bibitem{DMM} J.~Dijkstra, J.~van Mill, J.~Mogilski, {\em The space of infinite-dimensional compacta and other topological copies of $(l\sp 2\sb f)\sp \omega$}, Pacific J. Math. {\bf 152}:2 (1992), 255--273.


\bibitem{vD} E.~van Douwen, {\em The integers and topology}, in: Handbook of set-theoretic topology, 111--167, North-Holland, Amsterdam, 1984.






\bibitem{En2} R.~Engelking, {\em Theory of dimensions finite and infinite}, Heldermann Verlag, Lemgo, 1995.

\bibitem{fremlin} D.~Fremlin, {\em The partially ordered sets of measure theory and Tukey's ordering}, Note Mat. {\bf 11} (1991), 177--214.

\bibitem{Ke} A.~Kechris, {\em Classical descriptive set theory}, Springer, NY, 1995.

\bibitem{Kron} N.~Kroonenberg, {\em Characterization of finite-dimensional Z-sets}.
Proc. Amer. Math. Soc. {\bf 43} (1974), 421--427.


\bibitem{Mill} J.~van Mill, {\em Infinite-dimensional topology. Prerequisites and introduction}, North-Holland Publ., Amsterdam, 1989.

\bibitem{Miller} A.~Miller, {\em A characterization of the least cardinal for which the Baire
category theorem fails}, Proc. Amer. Math. Soc. {\bf 86}:3 (1982), 498--502.

\bibitem{Sol} S.~Solecki, {\em $G\sb \delta$ ideals of compact sets}, J. Eur. Math. Soc. {\bf 13}:4 (2011), 853--882.

\bibitem{Solprivat} S.~Solecki, {\em private communication}, August 2011.

\bibitem{Vau} J.~Vaughan, {\em Small uncountable cardinals and topology}, Open problems in topology, 195--218, North-Holland, Amsterdam, 1990.

\end{thebibliography}
\end{document}